\theoremstyle{plain}
\newtheorem*{claim*}{Claim}
\newtheorem{thm}{Theorem}[section]
\newtheorem{corollary}[thm]{Corollary}
\newtheorem{lemma}[thm]{Lemma}
\newtheorem{prop}[thm]{Proposition}
\theoremstyle{definition}
\newtheorem{ex}[thm]{Example}
\newtheorem{remark}[thm]{Remark}
\newtheorem{con}[thm]{Construction}
\newtheorem{prob}[thm]{Open Problem}
\begin{document}
\subjclass[2020]{20M12, 20M10, 20M14, 20M17}
\title{\large{Semigroups whose right ideals are finitely generated}}
\author{Craig Miller}
\address{Department of Mathematics, University of York, UK, YO10 5DD}
\email{craig.miller@york.ac.uk}

\maketitle

\begin{abstract}
We call a semigroup $S$ {\em weakly right noetherian} if every right ideal of $S$ is finitely generated; equivalently, $S$ satisfies the ascending chain condition on right ideals.
We provide an equivalent formulation of the property of being weakly right noetherian in terms of principal right ideals, and we also characterise weakly right noetherian monoids in terms of their acts.\par
We investigate the behaviour of the property of being weakly right noetherian under quotients, subsemigroups and various semigroup-theoretic constructions.  In particular, we find necessary and sufficient conditions for the direct product of two semigroups to be weakly right noetherian.\par
We characterise weakly right noetherian regular semigroups in terms of their idempotents.  We also find necessary and sufficient conditions for a strong semilattice of completely simple semigroups to be weakly right noetherian.  
Finally, we prove that a commutative semigroup $S$ with finitely many archimedean components is weakly (right) noetherian if and only if $S/\mathcal{H}$ is finitely generated.
\end{abstract}

\section{\large{Introduction}\nopunct}

A {\em finiteness condition} for a class of universal algebras is a property that is satisfied by at least all finite members of that class.  Some of the most important finiteness conditions are {\em ascending chain conditions}.  The study of ascending chain conditions on ideals of rings, initiated by Noether in the early part of the last century, has been instrumental in the development of the structure theory of rings.  A ring is {\em right} (resp.\ {\em left}) {\em Noetherian} if it satisfies the ascending chain condition on right (resp.\ left) ideals,
and {\em Noetherian} if it is both right Noetherian and left Noetherian.
Noetherian rings play a key role in many major ring-theoretic results, such as Hilbert's basis theorem and Krull's intersection theorem.\par
We call a semigroup $S$ {\em weakly right noetherian} if every right ideal of $S$ is finitely generated.\footnote{Weakly right noetherian semigroups are also known in the literature as {\em right noetherian}.  However, we use the term `right noetherian' to denote semigroups whose right congruences are all finitely generated.}
Similarly, a semigroup is {\em weakly left noetherian} if every left ideal is finitely generated.  We call a semigroup {\em weakly noetherian} if it both weakly right noetherian and weakly left noetherian.  It is clear that each of these properties is a finiteness condition.  In this paper we will focus on weakly {\em right} noetherian semigroups.
Such semigroups have received a significant amount of attention; see for instance \cite{Aubert, Davvaz, Jespers, Sat}.\par
Related to the property of being weakly right noetherian is the stronger condition that every right congruence is finitely generated; we call semigroups satisfying this condition {\em right noetherian}.  Such semigroups were studied systematically in \cite{Miller1}, and had previously been considered in \cite{Hotzel, Kozhukhov1, Kozhukhov2}.  Another related notion is that of the universal right congruence being finitely generated, which was first considered in \cite{Dandan}.  The stronger condition that every right congruence of finite index is finitely generated (where index means the number of classes) was introduced and studied in \cite{Miller2}.\par
This paper is structured as follows.  In Section \ref{sec:preliminaries} we provide the necessary preliminary material.  
In Section \ref{sec:fundamentals} we present some equivalent formulations of the property of being weakly right noetherian.
In Sections \ref{sec:quotients} and \ref{sec:sub} we explore how a semigroup and its substructures and quotients relate to one another with regard to the property of being weakly right noetherian.  
We then investigate how this property behaves under various semigroup-theoretic constructions in Section \ref{sec:con}.  Specifically, we consider direct products, free products, semilattices of semigroups, Rees matrix semigroups, Brandt extensions and Bruck-Reilly extensions.
Section \ref{sec:reg} is concerned with regular semigroups.  We first consider regular semigroups in general, and then focus on the important subclasses of inverse semigroups and completely regular semigroups.
Finally, in Section \ref{sec:comm}, we consider commutative semigroups.  The main result of that section is a necessary and sufficient condition for a commutative semigroup with finitely many archimedean components to be weakly noetherian.

\section{\large{Preliminaries}\nopunct}
\label{sec:preliminaries}

In this section we establish some basic definitions and notation.  We begin by providing some set-theoretic definitions.\par  
A relation $\leq$ on a set $P$ is said to be {\em preorder} if it is both reflexive and transitive.  If a preorder is also symmetric, then it is an {\em equivalence relation}.  On the other hand, if a preorder is antisymmetric then it is a {\em partial order}.\par
A {\em poset} is a set $P$ together with a partial order $\leq$.  Given any set $X,$ a collection $P$ of subsets of $X$ forms a poset under the partial order of inclusion.  In particular, the set of all right ideals of a semigroup is a poset (under $\subseteq$).\par
Let $(P, \leq_P)$ and $(Q, \leq_Q)$ be two posets.  A map $\theta : P\to Q$ is said to be {\em order-preserving} if $x\leq_P y$ implies $x\theta\leq_Q y\theta$ for all $x, y\in P.$  A map $\theta : P\to Q$ is an {\em isomorphism} if both $\theta$ and $\theta^{-1}$ are order-preserving (i.e.\ $x\leq_P y$ if and only if $x\theta\leq_Q y\theta$ for all $x, y\in P$) and $\theta$ is a bijection.  We say that $P$ and $Q$ are {\em isomorphic} if there exists an isomorphism between them.  Note that to show that a map $\theta : P\to Q$ is an isomorphism, it suffices to prove that $\theta$ is {\em surjective} and that $x\leq_P y$ if and only if $x\theta\leq_Q y\theta$ for all $x, y\in P.$\par
Two elements $a$ and $b$ of a poset $P$ are said to be {\em comparable} if either $a\leq b$ or $b\leq a$; otherwise, $a$ and $b$ are {\em incomparable}.  A subset of $P$ in which any two elements are comparable is called a {\em chain}.  An {\em antichain} of $P$ is a subset consisting of pairwise incomparable elements.\\
~\par
We now turn our attention to semigroups.  We refer the reader to \cite{Howie} for a more comprehensive introduction to semigroup theory.  Throughout the remainder of the section, $S$ will denote a semigroup.\par
We denote by $S^1$ the monoid obtained from $S$ by adjoining an identity if necessary (if $S$ is already a monoid, then $S^1=S$).  Similarly, we denote by $S^0$ the semigroup with zero obtained from $S$ by adjoining a zero if necessary.\par
Let $M$ be a monoid with identity $1.$  An element $a\in M$ is said to be {\em right invertible} if there exists $b\in M$ such that $ab=1.$  {\em Left invertible} elements are defined dually.  An element of $M$ is called a {\em unit} if it is both right invertible and left invertible.  The units of $M$ form a group, called the {\em group of units} of $M,$ which we denote by $U(M).$\par
We denote the set of idempotents of $S$ by $E(S).$  If $S=E(S),$ it is called a {\em band}.  A {\em semilattice} is a commutative band.  The multiplication in a semilattice $E$ induces the following partial order: $$e\geq f\iff ef=f.$$
In this way we may view $E$ as a meet-semilattice in the order-theoretic sense.  Conversely, any order-theoretic meet-semilattice may be viewed as a commutative band with meet taken as the binary operation.\par
If $a, b\in S$ are such that $a=aba$ and $b=bab,$ then $b$ is called an {\em inverse} of $a.$  The semigroup $S$ is said to be {\em regular} if every element of $S$ has an inverse.  If, additionally, the inverse of each element of $S$ is unique, then $S$ is an {\em inverse semigroup}.  It is well known that a semigroup is inverse if and only if it is regular and its idempotents form a semilattice \cite[Theorem 5.1.1]{Howie}.\par
A subset $I\subseteq S$ is said to be a {\em right ideal} of $S$ if $IS\subseteq I.$  Left ideals are defined dually, and an {\em ideal} of $S$ is a subset that it is both a right ideal and a left ideal.\par
Given a subset $X\subseteq S,$ the {\em right ideal generated by} $X$ is the set $XS^1.$
A right ideal $I$ of $S$ is said to be {\em finitely generated} if it can be generated by a finite set.\par
Note that a right ideal of $S$ can be generated by a set {\em as a right ideal} or {\em as a semigroup}.  For proper right ideals, we will always use the term `generate' in the former sense.  When we say that $S$ is generated by a set $X,$ we mean `generated as a semigroup', unless stated otherwise, and we write $S=\langle X\rangle.$  We note that a right ideal can be finitely generated as a right ideal but not as a semigroup; e.g.\ any non-finitely group $G$ is certainly finitely generated as a right ideal.\par  
A {\em right congruence} on $S$ is an equivalence relation $\rho$ on $S$ such that $(a, b)\in\rho$ implies $(ac, bc)\in\rho$ for all $a, b, c\in S$; {\em left congruences} are defined analagously.  A {\em congruence} is a relation that is both a right congruence and left congruence.  For a congruence $\rho$ on $S,$ we denote the congruence class of an element $a\in S$ by $[a]_{\rho}.$\par
Recall that a semigroup is {\em right noetherian} if every right congruence is finitely generated.  (A right congruence $\rho$ on $S$ is {\em finitely generated} if there exists a finite set $X\subseteq\rho$ such that $\rho$ is the smallest right congruence on $S$ containing $X.$)  Right noetherian semigroups are weakly right noetherian \cite[Lemma 2.7]{Miller1}, but the converse certainly does not hold.  Indeed, unlike the situation for rings, the lattice of right congruences on a semigroup is not in general isomorphic to the lattice of right ideals.  For example, groups have no proper right ideals, but the lattice of right congruences on a group is isomorphic to its lattice of subgroups.  Consequently, groups are trivially weakly right noetherian, but a group is right noetherian if and only if all its subgroups are finitely generated \cite[Proposition 2.14]{Miller1}.\par
The most essential tools for understanding the structure of a semigroup are its Green's relations $\mathcal{L}, \mathcal{R}, \mathcal{H}, \mathcal{D}$ and $\mathcal{J}.$
They are defined as follows.\par 
Two elements $a, b\in S$ are $\mathcal{L}$-{\em related} if they generate the same principal left ideal, i.e. $S^1a=S^1b.$
Similarly, two elements $a, b\in S$ are $\mathcal{R}$-{\em related} if they generate the same principal right ideal.
Green's relation $\mathcal{H}$ is defined as $\mathcal{H}=\mathcal{L}\cap\mathcal{R}.$
Two elements $a, b\in S$ are $\mathcal{D}$-{\em related} if there exists $c\in S$ such that $a\,\mathcal{L}\,c$ and $c\,\mathcal{R}\,b.$
Finally, if two elements $a, b\in S$ generate the same principal ideal (i.e. $S^1aS^1=S^1bS^1$), then they are said to be $\mathcal{J}$-{\em related}.\par
It is obvious from the definitions that $\mathcal{L}, \mathcal{R}, \mathcal{H}$ and $\mathcal{J}$ are equivalence relations on $S,$ and it turns out that $\mathcal{D}$ is also an equivalence relation.
Moreover, Green's relation $\mathcal{L}$ is a right congruence on $S$ and $\mathcal{R}$ is a left congruence on $S.$\par
Green's relation $\mathcal{R}$ defines a preorder $\leq_{\mathcal{R}}$ on $S,$ given by
$$a\leq_{\mathcal{R}}b\iff aS^1\subseteq bS^1.$$
The preorder $\mathcal{R}$ induces a partial order on the set of $\mathcal{R}$-classes of $S$: $R_a\leq R_b$ if and only if $a\leq_{\mathcal{R}}b.$  It is easy to see that the poset of $\mathcal{R}$-classes of $S$ is isomorphic to the poset of principal right ideals of $S,$ via the isomorphism $R_a\mapsto aS^1.$  Similarly, one can define preorders $\leq_{\mathcal{L}}$ and $\leq_{\mathcal{J}},$ leading to partial orders on the sets of $\mathcal{L}$-classes and $\mathcal{J}$-classes, respectively.\par
Note that when we need to distinguish between Green's relations on different semigroups, we will write them with the semigroup as a subscipt; i.e.\ $\mathcal{K}_S$ stands for $\mathcal{K},$ where $\mathcal{K}$ is any of Green's relations on $S.$\par
It is easy to see that the following inclusions between Green's relations hold: 
$$\mathcal{H}\subseteq\mathcal{L},\, \mathcal{H}\subseteq\mathcal{R},\, \mathcal{L}\subseteq\mathcal{D},\, \mathcal{R}\subseteq\mathcal{D},\, \mathcal{D}\subseteq\mathcal{J}.$$
It can be easily shown that every right (resp.\ left) ideal is a union of $\mathcal{R}$-classes (resp.\ $\mathcal{L}$-classes), and every ideal is a union of $\mathcal{J}$-classes.
A semigroup with no proper (right) ideals is called ({\em right}) {\em simple}.  A simple semigroup has a single $\mathcal{J}$-class; if it is right simple, then it has a single $\mathcal{R}$-class.  If $S$ has a zero $0,$ $S^2\neq\{0\},$ and $\{0\}$ is the only proper ideal of $S,$ then it is called {\em $0$-simple}.\par
Given an ideal $I$ of $S,$ the {\em Rees quotient} of $S$ by $I,$ denoted by $S/I,$  is the set $(S\!\setminus\!I)\cup\{0\}$ with multiplication given by
$$a\cdot b=\begin{cases}
ab&\text{ if }a, b, ab\in S\!\setminus\!I,\\
0&\text{ otherwise.}
\end{cases}$$
\par
Let $J$ be a $\mathcal{J}$-class of $S.$  The {\em principal factor} of $J$ is defined as follows.  If $J$ is the unique minimal ideal of $S,$ called the {\em kernel} of $S,$ its principal factor is itself.  Otherwise, the principal factor of $J$ is the Rees quotient of the subsemigroup $S^1xS^1,$ where $x$ is any element of $J,$ by the ideal $(S^1xS^1)\!\setminus\!J.$\par
The {\em principal factors} of $S$ are the principal factors of its $\mathcal{J}$-classes.  The kernel of $S,$ if it exists, is simple; all other principal factors are either $0$-simple or {\em null} (every product of two elements equals zero).\par
A {\em principal series} of a semigroup $S$ is a finite chain of ideals $$K(S)=I_1\subset I_2\subset\dots\subset I_n=S,$$
where $K(S)$ is the kernel of $S,$ and $I_k$ is maximal in $I_{k+1}$ for each $i\in\{1, \dots, n-1\}.$  The kernel $K(S)$ and the Rees quotients $I_{k+1}/I_k$ are the principal factors of $S.$\par
It is folklore that a semigroup has a principal series of length $n$ if and only if it has exactly $n$ $\mathcal{J}$-classes.\\
~\par
Closely related to the notion of one-sided ideals is that of {\em semigroup acts}.  We provide some basic definitions about acts; one should consult \cite{Kilp} for more information.\par
A ({\em right}) {\em $S$-act} is a non-empty set $A$ together with a map 
$$A\times S\to A, (a, s) \mapsto as$$
such that $a(st)=(as)t$ for all $a\in A$ and $s, t\in S.$  (If $S$ is a monoid, we also require that $a1=a$ for all $a\in A$.)
For instance, $S$ itself is an $S$-act via right multiplication.\par
A subset $B$ of an $S$-act $A$ is a {\em subact} of $A$ if $bs\in B$ for all $b\in B$ and $s\in S.$
Note that the right ideals of $S$ are precisely the subacts of the $S$-act $S.$\par
Given an $S$-act $A$ and a subact $B$ of $A,$ the {\em Rees quotient} of $A$ by $B,$ denoted by $A/B,$ is the set $(A\!\setminus\!B)\cup\{0\}$ with action given by
$$a\cdot s=
\begin{cases}
as &\text{if }as\in A\!\setminus\!B\\
0 &\text{otherwise,}
\end{cases}$$
and $0\cdot s=0,$ for all $a\in A\!\setminus\!B$ and $s\in S.$  It can be easily verified that $A/B$ is an $S$-act via the above action.\par
A subset $U$ of an $S$-act $A$ is a {\em generating set} for $A$ if $A=US^1,$
and $A$ is said to be {\em finitely generated} if it has a finite generating set.\par
We call an $S$-act $A$ {\em noetherian} if every subact of $A$ is finitely generated; equivalently, $A$ satisfies the ascending chain condition on its subacts.  In particular, the $S$-act $S$ being noetherian is equivalent to $S$ being a weakly right noetherian semigroup.

\section{\large{Equivalent Formulations and Elementary Facts}\nopunct}
\label{sec:fundamentals}

We begin this section by presenting equivalent characterisations of weakly right noetherian semigroups in terms of the ascending chain condition and maximal condition on right ideals.  The proof of this result is essentially the same as that of the analogue for rings and is omitted.

\begin{prop}
\label{acc}
The following are equivalent for a semigroup $S$:
\begin{enumerate}
\item $S$ is weakly right noetherian;
\item $S$ satisfies the ascending chain condition on right ideals; that is,
every ascending chain $I_1\subseteq I_2\subseteq\cdots$ of right ideals of $S$ eventually terminates;
\item every non-empty set of right ideals of $S$ has a maximal element.
\end{enumerate}
\end{prop}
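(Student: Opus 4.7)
The plan is to prove the three implications cyclically, $(1) \Rightarrow (2) \Rightarrow (3) \Rightarrow (1)$, following the standard ring-theoretic template. The only semigroup-specific facts needed are that an arbitrary union of right ideals of $S$ is again a right ideal, and that the right ideal generated by $X \subseteq S$ equals $X S^1$; consequently, a right ideal $I$ is finitely generated precisely when some finite subset of $I$ generates $I$.

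For $(1) \Rightarrow (2)$, given an ascending chain $I_1 \subseteq I_2 \subseteq \cdots$, I would form $I = \bigcup_n I_n$, which is a right ideal and hence, by hypothesis, is generated by a finite set $\{x_1, \dots, x_k\}$. Each $x_i$ lies in some $I_{n_i}$, so for $N = \max_i n_i$ one gets $\{x_1, \dots, x_k\} \subseteq I_N$, forcing $I \subseteq I_N$; the chain is then constant from index $N$ onward.

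For $(2) \Rightarrow (3)$, I argue contrapositively: if a non-empty family $\mathcal{F}$ of right ideals has no maximal element, then starting from any $I_1 \in \mathcal{F}$ one recursively chooses $I_{n+1} \in \mathcal{F}$ with $I_n \subsetneq I_{n+1}$, producing a strictly ascending chain that violates (2). For $(3) \Rightarrow (1)$, given a right ideal $I$ (trivial if $I = \emptyset$), the family $\mathcal{F}$ of finitely generated right ideals contained in $I$ is non-empty, since $aS^1 \in \mathcal{F}$ for any $a \in I$. By (3) it has a maximal element $J$; if $J \subsetneq I$, picking $a \in I \setminus J$ yields $J \cup aS^1 \in \mathcal{F}$ strictly above $J$, contradicting maximality. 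Hence $J = I$ is finitely generated.

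There is no real obstacle here, which is presumably why the author omits the proof: the argument is entirely formal and transports verbatim from the Noetherian-ring setting. The only points worth noticing are the systematic use of $S^1$ (so that the generators genuinely lie inside the generated right ideal) and the invocation of countable dependent choice when building the strictly ascending chain in $(2) \Rightarrow (3)$.
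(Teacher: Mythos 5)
Your proof is correct and is exactly the standard argument the paper has in mind when it omits the proof as ``essentially the same as that of the analogue for rings'': union of the chain for $(1)\Rightarrow(2)$, dependent choice for $(2)\Rightarrow(3)$, and maximality among finitely generated right subideals for $(3)\Rightarrow(1)$. The details you flag (generators of $XS^1$ lying in the ideal because $1\in S^1$, and $J\cup aS^1=(X\cup\{a\})S^1$ remaining finitely generated) are handled correctly, so there is nothing to add.
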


We now provide characterisations of weakly right noetherian semigroups in terms of their principal right ideals and also in terms of their $\mathcal{R}$-class structure.

\begin{thm}
\label{principal}
The following are equivalent for a semigroup $S$:
\begin{enumerate}
\item $S$ is weakly right noetherian;
\item $S$ satisfies the ascending chain condition on principal right ideals and contains no infinite antichain of principal right ideals (under $\subseteq$).
\item the poset of $\mathcal{R}$-classes of $S$ contains no infinite strictly ascending chain or infinite antichain.
\end{enumerate}
\end{thm}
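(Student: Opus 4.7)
The plan is to establish the cycle $(1)\Rightarrow(2)$, $(2)\Rightarrow(1)$, and to derive $(2)\Leftrightarrow(3)$ as an essentially trivial reformulation.

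For $(2)\Leftrightarrow(3)$: the map $aS^1\mapsto R_a$ (where $R_a$ denotes the $\mathcal{R}$-class of $a$) is a bijection between principal right ideals and $\mathcal{R}$-classes, since $aS^1=bS^1$ is precisely $a\,\mathcal{R}\,b$. Under this bijection $aS^1\subseteq bS^1$ translates to $a\leq_{\mathcal{R}}b$, so strictly ascending chains correspond to strictly ascending chains and antichains to antichains. Thus (2) and (3) say the same thing.

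For $(1)\Rightarrow(2)$: the ACC on principal right ideals is immediate from Proposition \ref{acc}. Suppose for contradiction that $\{a_iS^1: i\in\mathbb{N}\}$ is an infinite antichain of principal right ideals, and consider the right ideal $I=\bigcup_{i\in\mathbb{N}} a_iS^1$. If $I$ were finitely generated by $\{x_1,\dots,x_n\}$, then each $x_j$ would lie in some $a_{i_j}S^1$, forcing $I\subseteq\bigcup_{j=1}^n a_{i_j}S^1$. But then for any $k\notin\{i_1,\dots,i_n\}$ we would have $a_k\in a_{i_j}S^1$ for some $j$, hence $a_kS^1\subseteq a_{i_j}S^1$, contradicting the antichain property.

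For $(2)\Rightarrow(1)$, which I expect to be the key step: let $I$ be an arbitrary right ideal of $S$, and let $\mathcal{M}$ denote the set of principal right ideals that are maximal (under $\subseteq$) among those contained in $I$. A standard ACC argument shows that every principal right ideal $aS^1\subseteq I$ is contained in some member of $\mathcal{M}$: otherwise, one could iteratively produce a strictly ascending chain $aS^1\subsetneq a_1S^1\subsetneq a_2S^1\subsetneq\cdots$ inside $I$. Hence $I=\bigcup_{mS^1\in\mathcal{M}} mS^1$. Being maximal elements of a poset, the members of $\mathcal{M}$ are pairwise incomparable, so $\mathcal{M}$ is an antichain of principal right ideals; by hypothesis it is finite, say $\mathcal{M}=\{m_1S^1,\dots,m_nS^1\}$. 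Then $\{m_1,\dots,m_n\}$ generates $I$, completing the proof.

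The main obstacle is the $(2)\Rightarrow(1)$ direction, specifically the insight that the ACC guarantees enough maximal principal right ideals inside $I$ to cover it, after which the antichain hypothesis collapses their count to finitely many.
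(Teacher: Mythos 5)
Your proof is correct, and the key direction $(2)\Rightarrow(1)$ takes a genuinely different route from the paper's. The paper argues by contraposition: assuming $S$ is not weakly right noetherian but satisfies the ACC on principal right ideals, it takes an infinite strictly ascending chain of right ideals, picks witnesses $a_k\in I_k\!\setminus\!I_{k-1},$ and repeatedly extracts maximal elements from the resulting family $\{a_iS^1\}$ to build an infinite antichain. You instead give a direct proof: every principal right ideal inside a given right ideal $I$ sits below a maximal one (by the ACC), the maximal ones form an antichain and are therefore finite in number, and their generators generate $I.$ Your version is the classical ``ACC plus no infinite antichain implies every ideal is a finite union of maximal principal sub-ideals'' argument; it is arguably cleaner and more transparent, and it produces an explicit finite generating set (a set of representatives of the maximal $\mathcal{R}$-classes in $I$), whereas the paper's contrapositive argument has the merit of explicitly exhibiting the infinite antichain that must exist when finite generation fails. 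Your treatment of $(1)\Rightarrow(2)$ also differs slightly but inessentially: you show directly that the union of an infinite antichain cannot be finitely generated, while the paper converts the antichain into an infinite strictly ascending chain of right ideals; both reduce to Proposition \ref{acc}. Both of your iterative constructions (the ascending chain in the maximality claim, and implicitly the choice of the $a_{i_j}$) use dependent choice in the same way the paper's do, so there is no gap there.
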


\begin{proof}
$(1)\Rightarrow(2).$  By Proposition \ref{acc}, $S$ certainly satisfies the ascending chain condition on principal right ideals.  The fact that $S$ contains no infinite antichain of principal right ideal was proven in \cite[Lemma 1.6]{Hotzel}, but we provide a proof for completeness.\par
Suppose for a contradiction that there exists an infinite antichain $\{a_iS^1 : i\in\mathbb{N}\}$ of principal right ideals of $S.$  For each $n\in\mathbb{N},$ let $I_n=\{a_1, \dots, a_n\}S^1.$  Suppose that $I_m=I_n$ for some $m\leq n.$  Then $a_n\in a_iS^1$ for some $i\leq m.$  It must be the case that $i=m=n,$ for otherwise the incomparability of $a_iS^1$ and $a_nS^1$ would be contradicted. 
But then we have an infinite strictly ascending chain $$I_1\subsetneq I_2\subsetneq\dots$$ of right ideals of $S,$ contradicting Proposition \ref{acc}.\par
$(2)\Rightarrow(1).$  Suppose that $S$ is not weakly right noetherian yet the poset of principal right ideals of $S$ {\em does} satisfy the ascending chain condition.  We need to construct an infinite antichain of principal right ideals of $S.$\par
By Proposition \ref{acc} there exists an infinite strictly ascending chain
$$I_1\subsetneq I_2\subsetneq\dots$$
of right ideals of $S.$  Choose elements $a_1\in I_1$ and $a_k\in I_k\!\setminus\!I_{k-1}$ for $k\geq 2.$  Then certainly $a_kS^1$ is not contained in any $a_jS^1, j<k,$ since $a_jS^1\subseteq I_j$ and $a_k\in I_k\!\setminus\!I_{j}.$\par 
Consider the infinite set $\{a_iS^1 : i\in\mathbb{N}\}$ of principal right ideals of $S.$  This set contains a maximal element, say $a_{k_1}S^1$; that is, $a_{k_1}S^1$ is not contained in any $a_jS^1, j\neq k_1.$  Indeed, if this were not the case, then there would exist an infinite strictly ascending chain of principal right ideals of $S,$ contradicting the assumption.\par
Now consider the infinite set $\{a_iS^1 : i\geq k_1+1\}.$  Again, this set contains a maximal element, say $a_{k_2}S^1.$  Thus $a_{k_2}S^1$ is not contained in $a_jS^1$ for any $j>k_1, j\neq k_2.$  In fact, $a_{k_2}S^1$ is not contained in $a_jS^1$ {\em for any} $j\in\mathbb{N}\!\setminus\!\{k_2\},$ since, as observed above, $a_{k_2}S^1$ is not contained in any $a_jS^1, j<k.$\par
Continuing this process ad infinitum, we obtain an infinite antichain $\{a_{k_i}S^1 : i\in\mathbb{N}\}$ of principal right ideals of $S,$ as required.\par
$(2)\Leftrightarrow(3).$  This follows from the fact, established in Section \ref{sec:preliminaries}, that the poset of $\mathcal{R}$-classes of $S$ is isomorphic to the poset of principal right ideals of $S.$
\end{proof}

\begin{corollary}
\label{R-classes}
Any semigroup with finitely many $\mathcal{R}$-classes is weakly right noetherian.  In particular, all finite semigroups and all right simple semigroups (which include groups) are weakly right noetherian.
\end{corollary}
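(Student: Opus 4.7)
The plan is to deduce this immediately from Theorem \ref{principal}(3), which characterises weakly right noetherian semigroups as those containing no infinite strictly ascending chain and no infinite antichain of $\mathcal{R}$-classes. If $S$ has only finitely many $\mathcal{R}$-classes, then both conditions hold vacuously: any chain or antichain of $\mathcal{R}$-classes in $S$ is finite, as it is a subset of a finite set. Hence $S$ is weakly right noetherian.

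For the particular cases, a finite semigroup obviously has only finitely many $\mathcal{R}$-classes, and the preliminary section records that a right simple semigroup has a single $\mathcal{R}$-class. Since groups have no proper right ideals, they are right simple, so the group case follows from the right simple case. The whole corollary therefore amounts to a single appeal to Theorem \ref{principal} together with these observations, and there is no real obstacle to overcome.
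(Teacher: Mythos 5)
Your argument is correct and is exactly the intended one: the paper states this as an immediate corollary of Theorem \ref{principal}, leaving the (vacuous) verification of condition (3) and the observations about finite, right simple and group cases implicit. Nothing further is needed.
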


\begin{remark}
The condition that every (two-sided) ideal of a semigroup $S$ is finitely generated has been considered in \cite{Aubert, Keh}.  By an argument essentially the same as the proof of Theorem \ref{principal}, this condition is equivalent to $S$ satisfying the ascending chain condition on principal ideals and containing no infinite antichain of principal ideals, and also to $S$ containing no infinite strictly ascending chain or infinite antichain of $\mathcal{J}$-classes.  Any weakly right noetherian semigroup satisfies this condition (since every ideal is a one-sided ideal), but the converse does not hold.  Indeed, any simple semigroup trivially satisfies the condition that every ideal is finitely generated, but there exist simple semigroups that are not right noetherian; e.g.\ any completely simple semigroup with infinitely many $\mathcal{R}$-classes (see Corollary \ref{completely simple} below).
\end{remark}

The following result shows that there exist semigroups that satisfy the ascending chain condition on principal right ideals but are not weakly right noetherian.

\begin{prop}
\label{free}
Let $X$ be a non-empty set.  Then the free semigroup $F_X$ on $X$ satisfies the ascending chain condition on principal right ideals.  However, $F_X$ is weakly right noetherian if and only if $|X|=1.$
\end{prop}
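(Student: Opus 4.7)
The plan is to translate both parts of the statement into the prefix order on words. The key observation is that, for $w,w'\in F_X$, the containment $wF_X^1\subseteq w'F_X^1$ is equivalent to $w\in w'F_X^1$, which in turn is equivalent to $w'$ being a (possibly improper) prefix of $w$. With this in hand, the whole proposition becomes a routine analysis of prefixes of words in $X^+$.

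For the first assertion, an ascending chain $w_1F_X^1\subseteq w_2F_X^1\subseteq\cdots$ translates, by the remark above, into a sequence $w_1,w_2,\ldots$ in which each $w_{n+1}$ is a prefix of $w_n$. The lengths $|w_1|\geq |w_2|\geq\cdots$ form a nonincreasing sequence of positive integers and so stabilise from some index $N$ onward; but once two prefixes of the same word have the same length they coincide, so $w_n=w_N$ for all $n\geq N$. Hence the chain terminates.

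For the second assertion, if $|X|=1$, say $X=\{x\}$, then $F_X=\{x^n:n\geq 1\}$ and every right ideal containing $x^m$ also contains every $x^n$ with $n\geq m$; so every right ideal has the form $x^mF_X^1$ and is principal, so $F_X$ is weakly right noetherian. If instead $|X|\geq 2$, pick distinct letters $a,b\in X$ and set $w_n:=a^n b$ for $n\geq 1$. Whenever $n<m$, the first $n+1$ letters of $w_m=a^m b$ spell $a^{n+1}$, which differs from $w_n=a^n b$ in the last position, so $w_n$ is not a prefix of $w_m$; by symmetry no $w_m$ is a prefix of $w_n$ either. Thus $\{w_n F_X^1:n\geq 1\}$ is an infinite antichain of principal right ideals and Theorem \ref{principal} shows that $F_X$ is not weakly right noetherian.

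There is no real obstacle; the whole argument hinges on the single correspondence between inclusion of principal right ideals and the prefix relation, after which both claims are immediate. The only minor point deserving attention is not to conflate $F_X^1$ with $F_X$ when checking that $w=w'v$ covers the case $w=w'$ (corresponding to $v=1$), so that the prefix characterisation is genuinely an equivalence in both directions.
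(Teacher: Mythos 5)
Your proposal is correct and follows essentially the same route as the paper: both reduce inclusion of principal right ideals to the prefix order on words, deduce the ascending chain condition from the length function, and exhibit the antichain $\{a^nbF_X^1 : n\geq 1\}$ when $|X|\geq 2$. The only cosmetic differences are that you handle the $|X|=1$ case by directly classifying the right ideals of $\mathbb{N}$ rather than citing Theorem~\ref{principal}, and your ``by symmetry'' for why $w_m$ is not a prefix of $w_n$ when $n<m$ is really a length comparison, but neither affects the argument.
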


\begin{proof}
Consider two elements $u, v\in F_X.$  Clearly $uF_X^1\subsetneq vF_X^1$ if and only if $v$ is a proper prefix of $u,$ in which case $|u|>|v|.$  It follows that there cannot exist an infinite strictly ascending chain of principal right ideals of $F_X.$\par
If $|X|=1,$ then clearly $F_X\cong\mathbb{N}$ contains no incomparable elements, so it is weakly right noetherian by Theorem \ref{principal}.\par
Suppose $|X|\geq2,$ and choose distinct elements $x, y\in X.$  For $i\neq j,$ the element $x^iy$ is not a prefix of $x^jy,$ so $F_X$ contains an infinite antichain $\{(x^iy)F_X^1 : i\in\mathbb{N}\}$ of principal right ideals.  Hence, $F_X$ is not weakly right noetherian by Theorem \ref{principal}.
\end{proof}

\begin{remark}
We can readily deduce from Proposition \ref{free} that the property of being weakly right noetherian is not closed under subsemigroups.  Indeed, the free semigroup $F_X$ is a subsemigroup of the free group on $X,$ which is certainly weakly right noetherian.
\end{remark}

Monoid acts play the analogous role in the theory of monoids to that of modules in the theory of rings.  It is well known that a ring $R$ is right Noetherian if and only if every finitely generated right $R$-module is {\em Noetherian} (i.e.\ it satisfies the ascending chain condition on its submodules) \cite[Corollary 1.4]{Goodearl}.
We now present the analogue of this result for monoid acts.

\begin{prop}
\label{acts}
The following are equivalent for a monoid $M$:
\begin{enumerate}
 \item $M$ is weakly right noetherian;
 \item every finitely generated right $M$-act is noetherian.
\end{enumerate}
\end{prop}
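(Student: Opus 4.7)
The proof splits into the two implications, and the real content lies in $(1)\Rightarrow(2)$.

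For $(2)\Rightarrow(1)$, note that $M$ itself is a right $M$-act, finitely generated by the singleton $\{1\}$. Assuming every finitely generated $M$-act is noetherian, $M$ is a noetherian $M$-act, so every subact of $M$ is finitely generated. Since the subacts of $M$ are precisely the right ideals of $M$ (as noted in the preliminaries), $M$ is weakly right noetherian.

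For $(1)\Rightarrow(2)$, the plan is to reduce the finite generation of an arbitrary subact to finite generation of right ideals of $M$. Let $A$ be a finitely generated right $M$-act with generating set $a_1,\dots,a_n$, so $A=\bigcup_{i=1}^n a_iM$, and let $B$ be a subact of $A$. For each $i\in\{1,\dots,n\}$, set
\[
I_i=\{m\in M : a_im\in B\}.
\]
The first step is to verify that each $I_i$ is a right ideal of $M$: if $m\in I_i$ and $s\in M$, then $a_i(ms)=(a_im)s\in B$ because $B$ is a subact, so $ms\in I_i$. (If $I_i$ is empty this is vacuous, and the corresponding piece simply contributes nothing below.) By hypothesis each non-empty $I_i$ is finitely generated as a right ideal, say $I_i=X_iM^1=X_iM$ for some finite set $X_i\subseteq M$.

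The second step is the decomposition
\[
B=\bigcup_{i=1}^n a_iI_i.
\]
The inclusion $\supseteq$ is immediate from the definition of $I_i$. For $\subseteq$, any $b\in B\subseteq A$ satisfies $b=a_im$ for some $i$ and some $m\in M$, whence $m\in I_i$ and $b\in a_iI_i$. Combining this with $I_i=X_iM$ gives $a_iI_i=(a_iX_i)M$, so
\[
B=\Bigl(\bigcup_{i=1}^n a_iX_i\Bigr)M,
\]
and the generating set on the right is finite. Hence $B$ is a finitely generated $M$-act, and $A$ is noetherian.

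The proof is essentially routine; the only point that requires a moment's thought is the idea of packaging the behaviour of $B$ on each cyclic piece $a_iM$ into the right ideal $I_i$, which is exactly the monoid-act analogue of the standard argument for modules over a Noetherian ring.
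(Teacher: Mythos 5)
Your proof is correct and follows essentially the same route as the paper: both implications are handled identically, with the key step being the right ideals $I_i=\{m\in M : a_im\in B\}$ attached to each generator and the resulting decomposition $B=\bigl(\bigcup_i a_iX_i\bigr)M$. No gaps; the handling of empty $I_i$ is a minor point the paper also addresses by restricting to the generators with $I_x\neq\emptyset$.
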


\begin{proof}
$(1)\Rightarrow(2)$.  Let $A$ be a right $M$-act with a finite generating set $X,$ and let $B$ be a subact of $A.$
For each $x\in X,$ we define a set $$I_x=\{m\in M : xm\in B\}.$$
Let $X^{\prime}$ be the set of elements in $X$ such that $I_x\neq\emptyset.$
Since $B$ is a subact of $A,$ we have that $I_x$ is a right ideal of $S$ for each $x\in X^{\prime}.$
Since $M$ is weakly right noetherian, for each $x\in X^{\prime}$ there exists a finite set $U_x\subseteq I_x$ such that $I_x=U_xM.$
We claim that $B$ is generated by the set $$U=\bigcup_{x\in X^{\prime}}xU_x.$$
Indeed, let $b\in B.$  Since $b\in A,$ we have that $b=xm$ for some $x\in X$ and $m\in M.$
Now $m\in I_x,$ so $m=un$ for some $u\in U_x$ and $n\in M,$ and hence $b=(xu)n\in UM.$\par
$(2)\Rightarrow(1)$.  This follows from the fact that the right ideals of $M$ are subacts of the cyclic right $M$-act $M.$
\end{proof}

For commutative semigroups, clearly the properties of being weakly right noetherian and being weakly left noetherian coincide.  It is a well-known result, due to R{\'e}dei \cite{Redei}, that every congruence on a finitely generated commutative semigroup is finitely generated; that is:

\begin{thm}\cite{Redei}
\label{fg comm}
Every finitely generated commutative semigroup is noetherian (and hence weakly noetherian).
\end{thm}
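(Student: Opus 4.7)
The plan is to reduce the problem to the free commutative monoid $F=\mathbb{N}_0^n$ and then exploit Dickson's lemma. Given a finitely generated commutative semigroup $S=\langle s_1,\dots,s_n\rangle$, I would pass to the monoid $S^1$ (still finitely generated and commutative), realise $S^1$ as a homomorphic image $\phi\colon F\twoheadrightarrow S^1$ of the free commutative monoid on $n$ generators, and pull back any congruence $\rho$ on $S^1$ to $\bar\rho=\{(u,v)\in F\times F:\phi(u)\,\rho\,\phi(v)\}$; a finite generating set for $\bar\rho$ then descends via $\phi$ to a finite generating set for $\rho$. Since congruences on $S$ correspond to congruences on $S^1$ in which $\{1\}$ is a singleton class, proving the result for $F$ will suffice.

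For $F=\mathbb{N}_0^n$, the key observation is that a congruence $\rho$, viewed as a subset of $F\times F\cong \mathbb{N}_0^{2n}$, is itself a \emph{submonoid}: it contains $(0,0)$ by reflexivity and is closed under componentwise addition because it is compatible with the operation on $F$. By Dickson's lemma every subset of $\mathbb{N}_0^{2n}$ has only finitely many componentwise-minimal elements, so every submonoid of $\mathbb{N}_0^{2n}$ is finitely generated. Hence $\rho$ admits a finite submonoid generating set $R=\{(a_1,b_1),\dots,(a_k,b_k)\}$.

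The crux will be showing that the same $R$ also generates $\rho$ \emph{as a congruence}. Given $(x,y)\in\rho$, submonoid generation yields $c_1,\dots,c_k\in\mathbb{N}_0$ with $x=\sum_i c_ia_i$ and $y=\sum_i c_ib_i$; I would induct on $N=\sum_i c_i$. The base $N=0$ gives $(x,y)=(0,0)$, handled by reflexivity; for $N\geq 1$, pick an $i$ with $c_i>0$, set $x'=x-a_i$ and $y'=y-b_i$, apply the inductive hypothesis to $(x',y')$ (whose coefficient sum is $N-1$), and then chain the single rewrite step $x=a_i+x'\sim b_i+x'$ (coming from $(a_i,b_i)\in R$ with additive context $x'$) with the compatibility step $b_i+x'\sim b_i+y'=y$. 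I expect this induction to be the main obstacle---confirming that submonoid-level generation genuinely suffices for congruence-level generation---but it should go through cleanly because the closure operation available for congruence generation (adding a common summand to both coordinates) matches exactly the additive structure used to generate $\rho$ as a submonoid.
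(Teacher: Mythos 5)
The paper does not actually prove this statement: it is quoted as R\'edei's classical theorem with a citation to \cite{Redei}, so your proposal has to stand entirely on its own. Your reduction to the free commutative monoid $F=\mathbb{N}_0^n$ is fine, and so is your final induction showing that a submonoid generating set of a congruence would also generate it as a congruence. The fatal step is the middle one: it is \emph{not} true that every submonoid of $\mathbb{N}_0^{2n}$ is finitely generated, and Dickson's lemma does not imply it. Dickson's lemma gives finitely many componentwise-minimal elements, and for an \emph{ideal} of the monoid $\mathbb{N}_0^{2n}$ (a subset closed under adding arbitrary elements of $\mathbb{N}_0^{2n}$) those minimal elements do generate; but for a mere submonoid, knowing $x\geq m$ with $m$ minimal does not place $x-m$ back in the submonoid, so the minimal elements need not generate it.

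Concretely, already for $n=1$ take the congruence $\rho$ on $(\mathbb{N}_0,+)$ generated by the single pair $(1,2)$: its classes are $\{0\}$ and $\{1,2,3,\dots\}$, so as a subset of $\mathbb{N}_0^2$ it equals $\{(0,0)\}\cup\{(a,b):a,b\geq 1\}$. Every pair $(1,b)$ with $b\geq 1$ is indecomposable in this submonoid, since any sum of two nonzero elements has first coordinate at least $2$; hence $\rho$ is not finitely generated as a submonoid of $\mathbb{N}_0^2$, even though it is finitely generated as a congruence. This shows your strategy cannot work as stated: finite generation as a congruence is genuinely weaker than finite generation as a submonoid, and the entire difficulty of R\'edei's theorem lies in bridging exactly that gap. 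The standard repairs pass through Hilbert's basis theorem (associate to $\rho$ the binomial ideal generated by $\{x^u-x^v:(u,v)\in\rho\}$ in $k[x_1,\dots,x_n]$, extract finitely many binomial generators, and then argue that the corresponding pairs generate $\rho$ as a congruence), or else use a considerably more delicate well-quasi-order argument than a single application of Dickson's lemma to $\rho$ itself.
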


In the remainder of this section we state some useful facts about weakly right noetherian semigroups.
The following lemma is well known and will be used repeatedly throughout the remainder of the paper, usually without explicit mention.

\begin{lemma}
\label{finite gen set}
Let $S$ be a semigroup and let $X$ be a subset of $S.$  If the right ideal $XS^1$ of $S$ is finitely generated, then there exists a finite subset $Y\subseteq X$ such that $XS^1=YS^1.$
\end{lemma}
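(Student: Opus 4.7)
The plan is to exploit the fact that any finite generating set for $XS^1$ sits inside $XS^1$ itself, so each of its elements admits an explicit expression of the form (element of $X$)$\cdot$(element of $S^1$); collecting the $X$-components yields the desired finite subset $Y$.

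First, I would start from a finite generating set for the right ideal $I:=XS^1$. Say $I=\{z_1,\dots,z_n\}S^1$ for some $z_1,\dots,z_n\in S$. Since each $z_i$ lies in $I=XS^1$, by the very definition of the right ideal generated by $X$, we can write $z_i=x_iu_i$ for some $x_i\in X$ and $u_i\in S^1$. Set $Y=\{x_1,\dots,x_n\}$, a finite subset of $X$.

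Next I would verify the two inclusions. For $YS^1\subseteq XS^1$, this is immediate from $Y\subseteq X$. For the reverse inclusion, observe that for each $i$ we have $z_i=x_iu_i\in x_iS^1\subseteq YS^1$, and therefore $\{z_1,\dots,z_n\}\subseteq YS^1$; since $YS^1$ is a right ideal, it follows that $\{z_1,\dots,z_n\}S^1\subseteq YS^1$, i.e.\ $XS^1=I\subseteq YS^1$. Combining the two inclusions gives $XS^1=YS^1$, as required.

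There is really no serious obstacle here; the only point that deserves care is noting that the chosen generators $z_i$ of $I$ need not themselves lie in $X$, which is precisely why one has to peel off the trailing factor from $S^1$ to extract an element of $X$. Writing $S^1$ rather than $S$ is also what makes the argument work when some $z_i$ happens already to be an element of $X$ (one then just takes $u_i=1$).
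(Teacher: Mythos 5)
Your proof is correct and is exactly the standard argument one would expect here; the paper itself omits the proof, describing the lemma as well known. Your observation that the given generators $z_i$ need not lie in $X$, and that one must peel off the trailing $S^1$-factor to land in $X$, is precisely the (only) point of substance.
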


Let $S$ be a semigroup.  An element $s\in S$ is said to be {\em decomposable} if $s\in S^2.$  An element is {\em indecomposable} if it is not decomposable.

\begin{lemma}
\label{indecomposable}
Let $S$ be a weakly right noetherian semigroup.  Then $S$ has only finitely many indecomposable elements.
\end{lemma}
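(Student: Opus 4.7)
The plan is to consider the right ideal generated by the set of indecomposable elements itself and exploit the fact that any element obtained from it via the action of $S$ is decomposable.

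Let $I$ denote the set of indecomposable elements of $S$. If $I$ is empty there is nothing to prove, so assume $I \neq \emptyset$. Form the right ideal $R = IS^1$. Since $S$ is weakly right noetherian, $R$ is finitely generated, and so by Lemma \ref{finite gen set} there is a finite subset $Y \subseteq I$ with $R = YS^1$.

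I claim that $I = Y$, which immediately gives that $I$ is finite. Since $Y \subseteq I$ by construction, it remains to show $I \subseteq Y$. Take any $x \in I$. Then $x \in IS^1 = R = YS^1 = Y \cup YS$. If $x \in YS$ then $x \in S^2$, contradicting the indecomposability of $x$; hence $x \in Y$.

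There is no serious obstacle here: the only subtle point is choosing the right ideal so that the ``second'' piece $YS$ automatically consists of decomposable elements, which is exactly what happens when the generating set is contained in $I$. The argument needs no case distinction beyond the trivial $I = \emptyset$ case.
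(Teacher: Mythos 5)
Your proof is correct and rests on exactly the same observation as the paper's: once a right ideal is written as $YS^1$ with $Y$ finite, every element of $YS$ lies in $S^2$ and is therefore decomposable. The paper applies this to the right ideal $S$ itself (so all indecomposables must lie in the finite generating set of $S$), whereas you apply it to $IS^1$ via Lemma \ref{finite gen set}; the two arguments are interchangeable.
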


\begin{proof}
Since $S$ is weakly right noetherian, $S$ is finitely generated as a right ideal; 
that is, there exists a finite set $X\subseteq S$ such that $S=XS^1.$
Therefore, we have that $S\!\setminus\!X\subseteq S^2,$ so $S$ has at most $|X|$ indecomposable elements.
\end{proof}

We now show that a semigroup composed of a finite union of weakly right noetherian subsemigroups is also weakly right noetherian.

\begin{lemma}
\label{finite union}
Let $S$ be a semigroup, and suppose that $S$ is a union of subsemigroups $S_1, \dots, S_n.$
If each $S_i$ is weakly right noetherian, then $S$ is also weakly right noetherian.
\end{lemma}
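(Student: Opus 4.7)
The plan is to verify the ascending chain condition on right ideals of $S$ and then invoke Proposition \ref{acc}. The key observation is that if $I$ is a right ideal of $S$ and $I\cap S_i$ is non-empty, then $I\cap S_i$ is a right ideal of $S_i$: for $a\in I\cap S_i$ and $s\in S_i\subseteq S$, the product $as$ lies in $I$ since $I$ is a right ideal of $S$, and in $S_i$ since $S_i$ is a subsemigroup.

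Given an ascending chain $I_1\subseteq I_2\subseteq\dots$ of right ideals of $S$, I would fix $i\in\{1,\dots,n\}$ and consider the induced chain $I_1\cap S_i\subseteq I_2\cap S_i\subseteq\dots$ of subsets of $S_i$. Either every term is empty (in which case the induced chain is trivially constant) or there is a smallest $j_0$ for which $I_{j_0}\cap S_i$ is non-empty, and from that point on we have an ascending chain of genuine right ideals of $S_i$. Since $S_i$ is weakly right noetherian, this chain stabilises at some index $N_i\geq j_0$.

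Let $N=\max\{N_1,\dots,N_n\}$. Then $I_N\cap S_i=I_{N+k}\cap S_i$ for every $i$ and every $k\geq 0$. Because $S=S_1\cup\dots\cup S_n$, we have
$$I_{N+k}=\bigcup_{i=1}^n(I_{N+k}\cap S_i)=\bigcup_{i=1}^n(I_N\cap S_i)=I_N,$$
so the original chain terminates. By Proposition \ref{acc}, $S$ is weakly right noetherian. There is no serious obstacle here; the only mild point is the bookkeeping for possibly empty intersections $I_j\cap S_i$, which is handled by simply starting the chain of right ideals of $S_i$ once it first becomes non-empty.
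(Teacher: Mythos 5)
Your argument is correct. The decomposition you use is the same one the paper uses --- intersecting a right ideal of $S$ with each $S_i$ and observing that the (non-empty) intersection is a right ideal of $S_i$ --- but you then route the whole proof through the ascending chain condition and Proposition \ref{acc}, whereas the paper argues directly with generating sets: given a single right ideal $I$ of $S$, it takes a finite set $X_j$ generating $I\cap S_j$ as a right ideal of $S_j$ and checks that $\bigcup_j X_j$ generates $I$ as a right ideal of $S$, since $a\in I\cap S_j=X_jS_j^1\subseteq X_jS^1$. Both proofs are sound and of comparable length. The direct proof is slightly more self-contained (it does not lean on the unproved equivalence in Proposition \ref{acc}, and it invokes the hypothesis on each $S_i$ only once rather than passing to ACC and back) and it yields explicit generating sets, which is in the spirit of how the lemma is applied later in the paper. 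Your version has the mild advantage that the bookkeeping is entirely about indices of a chain, and you handle the one genuine subtlety --- that $I_j\cap S_i$ may be empty for small $j$ --- correctly by starting the induced chain at the first non-empty term. Nothing is missing.
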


\begin{proof}
Let $I$ be a right ideal of $S.$  For $j\in\{1, \dots, n\}$, let $I_j$ be the restriction of $I$ to $S_j.$  Then $I_j$ is a right ideal of $S_j.$
Since $S_j$ is weakly right noetherian, $I_j$ is generated by some finite set $X_j.$
We claim that $I$ is generated by the finite set $X=\bigcup_{i=1}^nX_i.$
Indeed, if $a\in I,$ then $a\in I_j=X_jS_j^1$ for some $j\in\{1, \dots, n\}.$
\end{proof}

\section{\large{Quotients and Ideals}\nopunct}
\label{sec:quotients}

In this section we consider the relationship between a semigroup and its quotients and ideals with regard to the property of being weakly right noetherian.
We first show that this property is closed under quotients (or, equivalently, homomorphic images).

\begin{lemma}
\label{quotient}
Let $S$ be a semigroup and let $\rho$ be a congruence on $S.$  If $S$ is weakly right noetherian, then so is $S/\rho.$
\end{lemma}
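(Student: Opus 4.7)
The plan is to use the natural quotient map $\pi\colon S\to S/\rho,\ a\mapsto[a]_\rho,$ to pull right ideals of $S/\rho$ back to right ideals of $S.$ Concretely, given a right ideal $J$ of $S/\rho,$ I would define
\[
I \;=\; \pi^{-1}(J) \;=\; \{a\in S : [a]_\rho\in J\}.
\]
The first routine step is to verify that $I$ is a right ideal of $S$: for $a\in I$ and $s\in S,$ $\pi(as)=[a]_\rho[s]_\rho\in J$ since $J$ is a right ideal of $S/\rho,$ so $as\in I.$ (If $J$ is empty the statement is trivial, so we may assume $I$ is nonempty.)

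Next, since $S$ is weakly right noetherian, there is a finite subset $X=\{x_1,\dots,x_n\}\subseteq I$ with $I=XS^1.$ I would then claim that the finite set $[X]_\rho:=\{[x_1]_\rho,\dots,[x_n]_\rho\}$ generates $J$ as a right ideal of $S/\rho.$ Indeed, any $[a]_\rho\in J$ comes from some $a\in I,$ hence $a=x_i$ or $a=x_is$ for some $i$ and some $s\in S,$ which upon applying $\pi$ gives $[a]_\rho=[x_i]_\rho$ or $[a]_\rho=[x_i]_\rho[s]_\rho,$ placing $[a]_\rho$ in $[X]_\rho(S/\rho)^1.$ Thus every right ideal of $S/\rho$ is finitely generated.

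There is no genuine obstacle here; the only minor care needed is making sure the $S^1$ in $I=XS^1$ maps correctly to the $(S/\rho)^1$ in the generation statement for $J$ (the identity case of $a=x_i$ is handled separately from the case $a=x_is$), and the trivial possibility that $J=\emptyset.$ The whole argument is a direct analogue of the standard fact that quotients of Noetherian rings are Noetherian, and fits in a few lines.
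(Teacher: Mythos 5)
Your proposal is correct and is essentially identical to the paper's proof: both pull the right ideal back along the quotient map $a\mapsto[a]_\rho$, use weak right noetherianity of $S$ to extract a finite generating set, and push its image forward, handling the $s=1$ case separately. No further comment is needed.
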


\begin{proof}
Let $I$ be a right ideal of $S/\rho,$ and define a set $$J=\{a\in S : [a]_{\rho}\in I\}.$$
It is clear that $J$ is a right ideal of $S.$
Since $S$ is weakly right noetherian, $J$ is generated by a finite set $X.$
We claim that $\rho$ is generated by the finite set $Y=\{[x]_{\rho} : x\in X\}.$
Indeed, let $u\in I.$  Select $a\in S$ such that $[a]_{\rho}=u.$  Then $a=xs$ for some $x\in X$ and $s\in S^1.$  If $s=1,$ then $u=[x]_{\rho}\in Y.$  Otherwise, we have $u=[x]_{\rho}[s]_{\rho}\in Y(S/\rho),$ as required.
\end{proof}

\begin{remark}
The converse of Lemma \ref{quotient} does not hold.  Indeed, the free semigroup $F_X$ with $|X|\geq2$ is not weakly right noetherian by Proposition \ref{free}, but there certainly exist quotients of $F_X$ that are weakly right noetherian.
\end{remark}

If $\rho$ is a congruence contained in $\mathcal{R},$ then the converse of Lemma \ref{quotient} holds.  In fact, we have:

\begin{lemma}
\label{congruence in R}
Let $S$ be a semigroup and let $\rho\subseteq\mathcal{R}$ be a congruence on $S.$  Then the poset of $\mathcal{R}$-classes of $S$ is isomorphic to the poset of $\mathcal{R}$-classes of $S/\rho.$  In particular, $S$ is weakly right noetherian if and only if $S/\rho$ is weakly right noetherian.
\end{lemma}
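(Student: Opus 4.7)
The forward direction is already Lemma \ref{quotient}, so the plan is to prove the converse: assuming $\rho\subseteq\mathcal{R}$ is a congruence on $S$ and $S/\rho$ is weakly right noetherian, deduce that every right ideal of $S$ is finitely generated.

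The first step, which is the conceptual heart of the argument, is to observe that every right ideal $I$ of $S$ is a union of $\rho$-classes. Indeed, if $a\in I$ and $b\,\rho\,a$, then by hypothesis $b\,\mathcal{R}\,a$, so $bS^1=aS^1$; thus either $b=a\in I$ or $b=as$ for some $s\in S$, in which case $b\in I$ since $I$ is a right ideal. This is the point at which the assumption $\rho\subseteq\mathcal{R}$ does its work.

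With that in hand, for an arbitrary right ideal $I$ of $S$ I would form the image $\overline{I}=\{[a]_\rho:a\in I\}$ in $S/\rho$; this is well defined by the saturation just proved, and a routine check shows it is a right ideal of $S/\rho$. By hypothesis, $\overline{I}$ is finitely generated, say by $\{[a_1]_\rho,\dots,[a_n]_\rho\}$ with each $a_i\in I$. I would then claim that $\{a_1,\dots,a_n\}$ generates $I$ as a right ideal of $S$. Given $b\in I$, we have $[b]_\rho=[a_i]_\rho\cdot t$ for some $i$ and some $t\in(S/\rho)^1$; lifting $t$ to an element $s\in S^1$ (with the convention $a_i\cdot 1=a_i$), we get $b\,\rho\,a_is$. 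Invoking $\rho\subseteq\mathcal{R}$ once more yields $b\in bS^1=(a_is)S^1\subseteq a_iS^1$, so $I=\{a_1,\dots,a_n\}S^1$.

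There is no real obstacle here; the whole argument is short and structural. The only mild bookkeeping issue is handling the generator relation uniformly over $S^1$ versus $S$ (i.e.\ the case $t=1$ versus $t\in S/\rho$), which is dispatched by the usual convention on $S^1$. The key to remember is that $\rho\subseteq\mathcal{R}$ is used twice: once to guarantee that right ideals of $S$ descend to right ideals of $S/\rho$ along $\rho$-classes, and once to guarantee that finite generators of $\overline{I}$ can be lifted to finite generators of $I$ via $bS^1=(a_is)S^1$.
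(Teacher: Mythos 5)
Your proof is correct and takes essentially the same approach as the paper: push the right ideal $I$ down to a right ideal of $S/\rho$, lift a finite generating set back to elements of $I$, and use $\rho\subseteq\mathcal{R}$ to turn $b\,\rho\,a_is$ into $b\in(a_is)S^1\subseteq a_iS^1$. Your opening observation that right ideals are saturated by $\rho$-classes is a harmless extra which the paper leaves implicit (and which in fact tidies up the inclusion $\{a_1,\dots,a_n\}\subseteq I$).
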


\begin{proof}
Let $P$ be the poset of principal right ideals of $S,$ and let $Q$ be the poset of principal right ideals of $S/\rho.$  Since the poset of principal right ideals of a semigroup is isomorphic to the poset of $\mathcal{R}$-classes, it suffices to prove that $P$ and $Q$ are isomorphic.  It then follows from Theorem \ref{principal} that $S$ is weakly right noetherian if and only if $S/\rho$ is weakly right noetherian.\par
Define a map $\theta : P\to Q, aS^1\mapsto [a]_{\rho}(S/\rho)^1.$
Clearly $\theta$ is surjective, so we just need to show that $aS^1\subseteq bS^1$ if and only if $[a]_{\rho}(S/\rho)^1\leq [b]_{\rho}(S/\rho)^1.$  It is clear that the forward direction holds.  Conversely, if $[a]_{\rho}(S/\rho)^1\subseteq[b]_{\rho}(S/\rho)^1$ then $[a]_{\rho}=[b]_{\rho}u$ for some $u\in(S/\rho)^1.$  If $u=1,$ then $(a, b)\in\rho\subseteq\mathcal{R},$ so $aS^1=bS^1.$  If $u\in S/\rho,$ then $u=[s]_{\rho}$ for some $s\in S.$  Then $(a, bs)\in\rho\subseteq\mathcal{R},$ so $aS^1=(bs)S^1\subseteq bS^1,$ as required.
\end{proof}

The following question naturally arises from Lemma \ref{congruence in R}.

\begin{prob}
Given a semigroup $S$ and congruence $\rho$ on $S,$ what is the relationship between the poset of $\mathcal{R}$-classes of $S$ and the poset of $\mathcal{R}$-classes of $S/\rho$?
\end{prob}

\begin{remark}
It is natural to wonder whether certain results about weakly right noetherian semigroups, such as Theorem \ref{principal} and Lemma \ref{quotient}, can be derived from more general results about posets.  In particular, inspired by Theorem \ref{principal}, one could investigate the finiteness condition that a poset contains no infinite ascending chain or infinite antichain.  For instance, is this condition closed under homomorphic images?
\end{remark}

The next result states that if a right ideal $I$ of a semigroup $S$ is weakly right noetherian and the Rees quotient of the $S$-act $S$ by $I$ (where $I$ is regarded as a subact of $S$) is noetherian, then $S$ is weakly right noetherian.

\begin{prop}
\label{right ideal and quotient}
Let $S$ be a semigroup and let $I$ be a right ideal of $S.$  If $I$ is weakly right noetherian and $S/I$ is noetherian (as an $S$-act), then $S$ is weakly right noetherian.
\end{prop}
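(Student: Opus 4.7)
The plan is to take an arbitrary right ideal $J$ of $S$ and construct a finite generating set for it by assembling information from inside $I$ and from the quotient $S/I$ separately.

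First I would observe that $J\cap I$ is a right ideal of $I$: if $a\in J\cap I$ and $i\in I$, then $ai\in J$ (since $J$ is a right ideal of $S$) and $ai\in I$ (since $I$ is a right ideal of $S$), so $ai\in J\cap I$. Because $I$ is weakly right noetherian, $J\cap I$ is generated, as a right ideal of $I$, by some finite set $X\subseteq J\cap I$; in particular $J\cap I\subseteq XS^1$.

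Next I would regard the image $\overline{J}\subseteq S/I$ of $J$ under the Rees quotient map, noting that $\overline{J}$ is a subact of the $S$-act $S/I$: for $a\in J$ and $s\in S$, the action gives either $as\in J$ (when $as\notin I$) or $0$, both of which lie in $\overline{J}$. Since $S/I$ is a noetherian $S$-act, $\overline{J}=V\cdot S^1$ for some finite $V\subseteq\overline{J}$. Setting $Y=V\setminus\{0\}\subseteq J\setminus I$, I would then claim that $J=(X\cup Y)S^1$. The inclusion $(X\cup Y)S^1\subseteq J$ is immediate since $X\cup Y\subseteq J$. For the reverse, given $a\in J$, if $a\in I$ then $a\in J\cap I\subseteq XS^1$; and if $a\in J\setminus I$, then writing $a=v\cdot s$ for some $v\in V$, $s\in S^1$ forces $v\neq 0$ (otherwise $a=0$), hence $v\in Y$, and forces $vs\notin I$, so the Rees action coincides with multiplication in $S$ and $a=vs\in YS^1$.

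There is no real obstacle here; the only point requiring care is the handling of the adjoined zero of $S/I$, namely verifying that any $a\in J\setminus I$ is witnessed by a nonzero generator in $V$ (which is automatic since $0\cdot s=0$). Everything else is a routine case split on whether an element of $J$ lies in $I$, so the argument reduces essentially to the bookkeeping above.
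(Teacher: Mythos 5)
Your proposal is correct and follows essentially the same route as the paper: decompose $J$ into $J\cap I$ (handled by weak right noetherianity of $I$) and its image in $S/I$ (handled by noetherianity of the $S$-act $S/I$), then take the union of the two finite generating sets, discarding the zero. The only cosmetic difference is that you describe the second piece as the image of $J$ under the quotient map rather than as the Rees quotient $J/(I\cap J)$ viewed as a subact of $S/I$, which is the same object.
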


\begin{proof}
Let $J$ be a right ideal of $S.$  Suppose first that $I\cap J=\emptyset.$  Then $J\subseteq S\!\setminus\!I$ and $J$ may be viewed as a subact of $S/I.$  Since $S/I$ is noetherian, there exists a finite set $U\subseteq I$ such that $J=US^1.$  Thus $J$ is finitely generated as a right ideal of $S.$\par
Now suppose that $I\cap J\neq\emptyset.$  Then $I\cap J$ is a right ideal of $I.$  Since $I$ is weakly right noetherian, $I\cap J$ is generated by some finite set $X.$
Considering $J$ as an $S$-act, the Rees quotient $A=J/(I\cap J)$ is a subact of $S/I.$  Since $S/I$ is noetherian, there exists a finite set $Y\subseteq A$ such that $A=YS^1.$  Let $Y^{\prime}=Y\!\setminus\!\{0\}.$
We claim that $J=(X\cup Y^{\prime})S^1.$\par
Indeed, let $a\in J.$  If $a\in I,$ then $a\in X(I\cap J)^1.$
If $a\in J\!\setminus\!I,$ then $a\in A,$ so $a=ys$ for some $y\in Y$ and $s\in S^1.$  Since $a\in S\!\setminus\!I,$ we must have $y\in Y^{\prime}.$  In either case, we have $a\in(X\cup Y^{\prime})S^1,$ as required.
\end{proof}

\begin{corollary}
\label{ideal extension}
Let $S$ be a semigroup and let $I$ be an ideal of $S$.
If both $I$ and the Rees quotient $S/I$ are weakly right noetherian, then $S$ is weakly right noetherian.
\end{corollary}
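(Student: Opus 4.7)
The plan is to deduce this from Proposition \ref{right ideal and quotient}.  Since $I$ is weakly right noetherian by hypothesis, it suffices to show that $S/I$, viewed as a right $S$-act via the Rees quotient action, is noetherian.  Once this is established, Proposition \ref{right ideal and quotient} applies directly and the conclusion follows.

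The key observation is that the $S$-act structure on $S/I$ is completely determined by the semigroup multiplication of $S/I$: for $a\in S\setminus I$ and $s\in S$, the action $a\cdot s$ coincides with the product of $a$ with the image of $s$ in the Rees quotient (which is $0$ precisely when $s\in I$).  Consequently, if $B$ is any subact of $S/I$ as an $S$-act, then $B\cup\{0\}$ is a right ideal of $S/I$ as a semigroup: for $b\in B$ and $u\in S/I$, the semigroup product $bu$ equals either $b\cdot t$ (when $u$ is represented by some $t\in S\setminus I$) or $0$ (when $u=0$), and in both cases it lies in $B\cup\{0\}$.

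Since $S/I$ is weakly right noetherian as a semigroup, $B\cup\{0\}$ admits a finite generating set $X$ as a right ideal.  I would then verify that $X$, supplemented by $0$ in case $0\in B$, generates $B$ as an $S$-act.  Each $b\in B$ factors as $b=xu$ for some $x\in X$ and $u\in(S/I)^{1}$: if $u=1$ then $b=x$; if $u\in S/I\setminus\{0\}$ is represented by some $t\in S\setminus I$ then $b=x\cdot t$ under the $S$-action; and if $u=0$ then $b=0$, which is obtained as $x\cdot s$ for any $s\in I$ (using that $I$ is non-empty).

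The argument is essentially a bookkeeping exercise, and I do not anticipate a serious obstacle.  The mildly delicate points are the passage from $B$ to $B\cup\{0\}$ — since subacts of $S/I$ are not required to contain $0$ — and the need to realise $0$ itself through the $S$-action, both of which are handled cleanly by the non-emptiness of $I$.
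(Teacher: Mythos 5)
Your proposal is correct and follows essentially the same route as the paper: reduce to Proposition \ref{right ideal and quotient} by showing that subacts of the $S$-act $S/I$ are (up to adjoining $0$) right ideals of the semigroup $S/I$, and transfer finite generating sets back through the identification of the semigroup product with the act action. The only cosmetic difference is that the paper observes that a subact of $S/I$ automatically contains $0$ (since $a\cdot z=0$ for any $z\in I$), so your passage from $B$ to $B\cup\{0\}$ is unnecessary, though harmless.
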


\begin{proof}  
We shall prove that $S/I,$ viewed as the Rees quotient of the $S$-act $S$ by the subact $I,$ is a noetherian $S$-act.  It then follows from Proposition \ref{right ideal and quotient} that $S$ is weakly right noetherian.\par
So, let $A$ be a subact of $S/I.$  We need to prove that $A$ is finitely generated.  Fix an element $z\in I.$  We claim that $A$ is a right ideal of $S/I$ (considered as a semigroup).  Indeed, let $a\in A$ and $u\in S/I.$  If $a, u\in S\!\setminus\!I,$ then $au\in A$ since $A$ is a subact of $S/I.$  Otherwise, we have $a=0$ or $u=0,$ in which case $au=0\in A.$  (Since $A$ is a subact of $S/I,$ we have $0=a\cdot z\in A.$)
Since $S$ is weakly right noetherian, there exists a finite set $X\subseteq A$ such that $A=X(S/I)^1.$  We claim that $A=XS^1.$  Indeed, if $a\in A,$ then $a=xu$ for some $x\in X$ and $u\in(S/I)^1.$  If $a\in S^1\!\setminus\!I,$ then $u\in S^1\!\setminus\!I.$  Otherwise, $a=0=xz.$  This completes the proof.
\end{proof}

Recall that in a semigroup $S$ with a principal series $K(S)=I_1\subseteq\dots \subseteq I_n=S,$ the kernel $K(S)$ and the Rees quotients $I_{k+1}/I_k$ are the principal factors of $S.$  Therefore, if all the principal factors are weakly right noetherian, then by successively applying Corollary \ref{ideal extension}, we deduce that $S$ is weakly right noetherian.

\begin{corollary}
\label{principal series}
Let $S$ be a semigroup with a principal series.  If all the principal factors of $S$ are weakly right noetherian, then $S$ is weakly right noetherian.
\end{corollary}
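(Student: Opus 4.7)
The plan is to prove the corollary by induction on the length $n$ of the principal series $K(S)=I_1\subsetneq I_2\subsetneq\dots\subsetneq I_n=S$, invoking Corollary \ref{ideal extension} at each step. The cleanest formulation is to show by induction on $k\in\{1,\dots,n\}$ that every term $I_k$ of the series is weakly right noetherian; the conclusion of the corollary is then the case $k=n$.

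For the base case $k=1$, we have $I_1=K(S)$, which is itself the first principal factor of $S$ and is therefore weakly right noetherian by hypothesis. For the inductive step, assume that $I_{k-1}$ is weakly right noetherian. First I would observe that $I_{k-1}$ is an ideal of the subsemigroup $I_k$, since $I_{k-1}$ is an ideal of $S$ and $I_{k-1}\subseteq I_k\subseteq S$. The Rees quotient $I_k/I_{k-1}$, formed from the semigroup $I_k$ and its ideal $I_{k-1}$, is exactly the principal factor $I_k/I_{k-1}$ of $S$, and this is weakly right noetherian by hypothesis. Corollary \ref{ideal extension}, applied to the ideal $I_{k-1}$ of the semigroup $I_k$, then yields that $I_k$ is weakly right noetherian, closing the induction.

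There is essentially no obstacle beyond unpacking definitions, since the substantive content has already been packaged into Corollary \ref{ideal extension}. The only point worth pausing over is the consistency check that the principal factor $I_k/I_{k-1}$ of $S$ agrees, as a semigroup, with the Rees quotient of $I_k$ by $I_{k-1}$; both have underlying set $(I_k\setminus I_{k-1})\cup\{0\}$ with the same truncated multiplication, so the identification is immediate from the definitions.
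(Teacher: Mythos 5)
Your proof is correct and is essentially the paper's own argument: the paper likewise obtains the corollary by successively applying Corollary \ref{ideal extension} along the principal series, using the fact that the kernel and the Rees quotients $I_{k+1}/I_k$ are exactly the principal factors. Your version merely makes the induction explicit.
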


Although ideals of weakly right noetherian semigroup are not in general weakly right noetherian (see, for instance, Example \ref{free comm ex}); they do satisfy the ascending chain condition on {\em principal} right ideals.  In fact, we prove a stronger statement:

\begin{prop}
\label{acc ideal}
Let $S$ be a semigroup and let $I$ be an ideal of $S.$  If $S$ satisfies the ascending chain condition on principal right ideals, then so does $I.$
\end{prop}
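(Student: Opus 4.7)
My plan is to argue by contradiction: suppose $I$ does not satisfy ACC on principal right ideals and so admits an infinite strictly ascending chain $a_1 I^1 \subsetneq a_2 I^1 \subsetneq \cdots$; then I will show the corresponding chain $a_1 S^1 \subsetneq a_2 S^1 \subsetneq \cdots$ in $S$ is also strictly ascending, contradicting the hypothesis on $S$.

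The ``ascending'' part is immediate from $a_n \in a_{n+1} I^1 \subseteq a_{n+1} S^1$. The substance is strictness. Fix $n$ and unpack the given strict $I$-inclusion into three consequences: (i) $a_n \neq a_{n+1}$ (otherwise the two principal right ideals of $I$ would coincide); (ii) since $a_n \in a_{n+1} I^1 = \{a_{n+1}\} \cup a_{n+1} I$, there exists $u \in I$ with $a_n = a_{n+1} u$; and (iii) since $a_{n+1} I^1$ is closed under right $I^1$-multiplication, strictness forces $a_{n+1} \notin a_n I^1$, and by (i) this refines to $a_{n+1} \notin a_n I$.

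The key step is to rule out $a_n S^1 = a_{n+1} S^1$. If it held, then by (i) we could write $a_{n+1} = a_n s$ for some $s \in S$. Substituting (ii) into this on the right gives the fixed-point identity $a_{n+1} = (a_{n+1} u) s = a_{n+1}(us)$, and feeding $a_{n+1} = a_n s$ into the outer factor once more yields
\[ a_{n+1} \;=\; a_{n+1}(us) \;=\; (a_n s)(us) \;=\; a_n(sus). \]
Because $I$ is \emph{two-sided}, $sus \in SIS \subseteq I$, so $a_{n+1} \in a_n I$, contradicting (iii). Hence $a_n S^1 \subsetneq a_{n+1} S^1$, and the resulting infinite strict chain in $S$ contradicts ACC on principal right ideals of $S$.

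The only genuine subtlety is the chained substitution in the key step, and I expect it to be the main point of the proof: it is precisely where two-sidedness of $I$ is used, via $SIS \subseteq I$. If $I$ were only a right ideal, then $sus$ need not lie in $I$ and the argument would break down, so one should not expect a one-sided analogue.
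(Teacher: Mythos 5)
Your proof is correct and follows essentially the same route as the paper: both arguments hinge on the identity $a_{n+1}=a_n(sus)$ obtained by chaining the two factorisations, with two-sidedness of $I$ used exactly where you use it, via $sus\in SIS\subseteq I$. The only difference is presentational (you argue by contradiction with a strictly ascending chain, while the paper lets ACC in $S$ pick an index $n$ and shows the chain in $I$ stabilises there), which does not change the substance.
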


\begin{proof}
Consider an infinite ascending chain $$a_1I^1\subseteq a_2I^1\subseteq\cdots$$ of principal right ideals of $I.$  Then for each $i, j\in\mathbb{N}$ with $i<j,$ there exists $u_{i,j}\in I^1$ such that $a_i=a_{j}u_{i,j}.$
Clearly we have an infinite ascending chain 
$$a_1S^1\subseteq a_2S^1\subseteq\cdots$$
of principal right ideals of $S.$  By assumption, there exists $n\in\mathbb{N}$ such that $a_mS^1=a_nS^1$ for all $m\geq n.$  Let $m>n.$  If $u_{n,m}=1,$ then $a_n=a_m.$  Now suppose that $u_{n,m}\in I.$  There exists $s_m\in S^1$ such that $a_m=a_ns_m,$ and hence 
$$a_m=a_ns_m=a_mu_{n,m}s_m=a_n(s_mu_{n,m}s_m).$$
We have that $s_mu_{n,m}s_m\in I$ since $I$ is an ideal, so $a_m\in a_nI.$  Since $a_nI^1\subseteq a_mI^1,$ we conclude that $a_nI^1=a_mI^1.$  Since $m$ was chosen arbitrarily, we have shown that the above ascending chain of principal right ideals of $I$ terminates at $a_nI^1.$
\end{proof}

\section{\large{Subsemigroups}\nopunct}
\label{sec:sub}

As mentioned previously, subsemigroups of weakly right noetherian semigroups need not be weakly right noetherian themselves.  In this section we explore various situations in which the property of being weakly right noetherian passes from a semigroup $S$ to a subsemigroup $T,$ and vice versa.

\begin{lemma}
\label{R, subsemigroup}
Let $S$ be a semigroup, and let $T$ be a subsemigroup of $S$ such that $S\!\setminus\!T$ is contained in a finite union of $\mathcal{R}$-classes.
If $T$ is weakly right noetherian, then $S$ is weakly right noetherian.
\end{lemma}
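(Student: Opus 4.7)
The plan is to take an arbitrary right ideal $I$ of $S$ and build a finite generating set by combining a finite generating set of $I \cap T$, obtained from the weakly right noetherian hypothesis on $T$, with a single representative from each of the (finitely many) $\mathcal{R}$-classes of $S$ which meets $S \setminus T$ and happens to be contained in $I$.

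First I would check that, provided $I \cap T$ is non-empty, it is a right ideal of $T$: for $a \in I \cap T$ and $t \in T$ we have $at \in I$ because $I$ is a right ideal of $S$, and $at \in T$ because $T$ is a subsemigroup. Since $T$ is weakly right noetherian, there is a finite set $X \subseteq I \cap T$ with $I \cap T = XT^1$; in particular $I \cap T \subseteq XS^1$. (If $I \cap T = \emptyset$ we take $X = \emptyset$.)

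Next, write $S \setminus T \subseteq R_1 \cup \dots \cup R_n$ where the $R_i$ are $\mathcal{R}$-classes of $S$. The key observation, recorded in the preliminaries, is that every right ideal of $S$ is a union of $\mathcal{R}$-classes: if $a \in I$ and $a \mathrel{\mathcal{R}} b$ then $b \in bS^1 = aS^1 \subseteq I$. Consequently each $R_i$ either lies entirely in $I$ or is disjoint from $I$. For every index $i$ with $R_i \subseteq I$, I would choose a representative $a_i \in R_i$, and then $R_i \subseteq a_iS^1$.

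To finish I would verify that the finite set $Y = X \cup \{a_i : R_i \subseteq I\}$ generates $I$ as a right ideal. The inclusion $YS^1 \subseteq I$ is immediate since $Y \subseteq I$. Conversely, take $b \in I$: if $b \in T$ then $b \in I \cap T \subseteq XS^1$; if $b \in S \setminus T$ then $b$ lies in some $R_i$, which forces $R_i \subseteq I$, so $a_i$ was chosen and $b \in R_i \subseteq a_iS^1$. I do not expect any serious obstacle; the two subtleties to watch are the dichotomy that each $\mathcal{R}$-class $R_i$ is either wholly inside or wholly disjoint from $I$ (so the representatives $a_i$ really do absorb all of $I \setminus T$), and the verification that $I \cap T$ is a right ideal of $T$ rather than merely of $S$, which is exactly what allows the inductive hypothesis on $T$ to be applied.
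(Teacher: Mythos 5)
Your proposal is correct and follows essentially the same route as the paper's proof: generate $I\cap T$ finitely using the hypothesis on $T$, observe that a right ideal of $S$ is a union of $\mathcal{R}$-classes so each of the finitely many classes meeting $S\setminus T$ is either contained in or disjoint from $I$, and adjoin one representative from each contained class. The only (harmless) difference is your explicit handling of the case $I\cap T=\emptyset$, which the paper glosses over.
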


\begin{proof}
Let $I$ be a right ideal of $S.$
Then $I\cap T$ is a right ideal of $T.$
Since $T$ is weakly right noetherian, there exists a finite set $X\subseteq I\cap T$ such that $I\cap T=XT^1.$\par
Let $R_1, \dots, R_n$ be the $\mathcal{R}$-classes that intersect with $S\!\setminus\!T.$  For each $i\in\{1, \dots\, n\},$ fix $r_i\in R_i.$
It is easy to see that if $I$ intersects an $\mathcal{R}$-class $R,$ then $R\subseteq I.$
We claim that $I$ is generated by the finite set $$Y=X\cup\{r_i : R_i\subseteq I, 1\leq i\leq n\}.$$ 
Indeed, let $a\in I.$  If $a\in T,$ then $a\in I\cap T=XT^1.$
If $a\in S\!\setminus\!T,$ then $a=r_is$ for some $i\in\{1, \dots\, n\}$ and $s\in S^1.$
Hence, in either case, we have that $a\in YS^1.$
\end{proof}

\begin{remark}
The converse of Lemma \ref{R, subsemigroup} does not hold.  Indeed, in Remark \ref{pf not wrn} an example is provided of a weakly right noetherian semigroup $S$ with a subsemigroup $T$ such that $S\!\setminus\!T$ is a group and $T$ is not weakly right noetherian.
\end{remark}

In \cite{Wallace} Wallace introduced the idea of Greens' relations taken relative to a subsemigroup.  Let $S$ be a semigroup and let $T$ be a subsemigroup of $S.$  The {\em $T$-relative Green's relation} $\mathcal{R}^T$ on $S$ is given by $$a\,\mathcal{R}^T\,b\iff aT^1=bT^1.$$
The relation $\mathcal{L}^T$ is defined dually, and $\mathcal{H}^T=\mathcal{R}^T\cap\mathcal{L}^T.$  The relations $\mathcal{D}^T$ and $\mathcal{J}^T$ are defined in a similar way.
All of these relations are equivalence relations on $S,$ and they respect $T$ in the sense that each class lies entirely in $T$ or entirely in $S\!\setminus\!T.$
The subsemigroup $T$ is said to have {\em finite Green index} in $S$ if there are only finitely many $\mathcal{H}^T$-classes in $S\!\setminus\!T.$  The notion of Green index for subsemigroups was introduced in \cite{Gray}.

\begin{prop}
Let $S$ be a semigroup, and let $T$ be a subsemigroup of $S$ such that $S\!\setminus\!T$ is a finite union of $\mathcal{R}^T$-classes.
Then $S$ is weakly right noetherian if and only if $T$ is weakly right noetherian.
\end{prop}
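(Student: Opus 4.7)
The plan is to prove the two implications separately. The forward direction generalises Lemma~\ref{R, subsemigroup} with $\mathcal{R}$ replaced by $\mathcal{R}^T$. Fix representatives $r_1, \dots, r_n$ of the $\mathcal{R}^T$-classes $R_1, \dots, R_n$ making up $S \setminus T$, and let $I$ be a right ideal of $S$. Then $I \cap T$ is a right ideal of $T$, so it is generated by some finite set $X \subseteq I \cap T$. I would first observe that whenever $R_i$ meets $I$ it is entirely contained in $I$: any $a \in R_i \cap I$ gives $R_i \subseteq a T^1 \subseteq a S^1 \subseteq I$. Consequently, every $a \in I \setminus T$ lies in some $R_i \subseteq I$ and satisfies $a = r_i t$ for some $t \in T^1$, so $I$ is generated as a right ideal of $S$ by the finite set $X \cup \{r_i : R_i \subseteq I\}$.

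For the converse, let $J$ be a right ideal of $T$ and consider the right ideal $JS^1$ of $S$. Since $S$ is weakly right noetherian, Lemma~\ref{finite gen set} supplies a finite $Y \subseteq J$ with $JS^1 = YS^1$. Given $a \in J$, write $a = ys$ with $y \in Y$ and $s \in S^1$. If $s \in T^1$ then $a \in Y T^1$. Otherwise $s \in S \setminus T$ lies in some $R_i$, and since $s T^1 = r_i T^1$ there exist $t, t' \in T^1$ with $s = r_i t$ and $r_i = s t'$. The key identity is
$$y r_i = y (s t') = (ys) t' = a t',$$
which lies in $J$ because $J$ is a right ideal of $T$ and $t' \in T^1$. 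Hence $y r_i \in J$, and $a = (y r_i) t \in (y r_i) T^1$. Therefore $J$ is generated as a right ideal of $T$ by the finite set
$$Y \cup \{y r_i : y \in Y,\ 1 \leq i \leq n,\ y r_i \in J\}.$$

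The converse is the only real obstacle. A tempting but incorrect simplification would be to argue $JS^1 \cap T = J$; this generally fails (already for a subsemigroup of a group), and the correct way round it is to exploit the two-sided divisibility built into $sT^1 = r_iT^1$ in order to pull the potentially ``external'' product $y r_i$ back into $J$ via the identity $y r_i = a t'$.
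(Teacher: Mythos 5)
Your proof is correct and takes essentially the same route as the paper's: the harder direction (from $S$ weakly right noetherian to $T$ weakly right noetherian) uses the identical generating set $Y\cup\{yr_i\in J\}$ and the identical key identity $yr_i=(ys)t'=at'\in J$ followed by $a=(yr_i)t$. For the other direction the paper merely cites \cite[Proposition 9]{Gray} to place $S\!\setminus\!T$ inside finitely many $\mathcal{R}_S$-classes and then applies Lemma \ref{R, subsemigroup}, whereas you reprove that lemma directly with $\mathcal{R}^T$-classes; the content is the same and your version is self-contained.
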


\begin{proof}
($\Rightarrow$) Let $I$ be a right ideal of $T$.
Let $I^{\prime}=IS^1;$ that is, the right ideal of $S$ generated by $I.$
Since $S$ is weakly right noetherian, we have that $I^{\prime}$ is generated by some finite subset $X\subseteq I$ by Lemma \ref{finite gen set}.
Let $R_1, \dots, R_n$ be the $\mathcal{R}^T$-classes in $S\!\setminus\!T,$  and for each $i\in\{1, \dots\, n\}$ fix $r_i\in R_i.$
We claim that $I$ is generated by the finite set $$Y=X\cup\{xr_i\in I : x\in X, 1\leq i\leq n\}.$$
Indeed, let $a\in I.$  Then $a\in I^{\prime},$ so $a=xs$ for some $x\in X$ and $s\in S^1.$
If $s\in T^1,$ then $a\in YT^1.$
If $s\in S\!\setminus\!T,$ then $s\in R_i$ for some $i\in\{1, \dots, n\},$ so there exist $t, u\in T^1$ such that $s=r_it$ and $r_i=su.$
Then $xr_i=(xs)u=au\in I,$ and hence $a=xs=(xr_i)t\in YT^1,$ as required.\par
($\Leftarrow$) Each $\mathcal{R}^S$-class is a union of $\mathcal{R}^T$-classes by \cite[Proposition 9]{Gray}, so $S\!\setminus\!T$ is contained in a finite union of $\mathcal{R}^S$-classes, and hence $S$ is weakly right noetherian by Lemma \ref{R, subsemigroup}.
\end{proof}

\begin{corollary}
\label{Green}
Let $S$ be a semigroup and let $T$ be subsemigroup of $S$ with finite Green index.  Then $S$ is weakly right noetherian if and only if $T$ is weakly right noetherian.
\end{corollary}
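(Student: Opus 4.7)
The plan is to deduce this directly from the preceding proposition by observing that finite Green index is a stronger condition than having finitely many $\mathcal{R}^T$-classes in $S\!\setminus\!T$. The key point is simply that $\mathcal{H}^T=\mathcal{R}^T\cap\mathcal{L}^T\subseteq\mathcal{R}^T$, so every $\mathcal{R}^T$-class decomposes as a union of $\mathcal{H}^T$-classes.

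In more detail, I would first note that by the definition of Green index, $S\!\setminus\!T$ is a union of finitely many $\mathcal{H}^T$-classes. Since the classes of $\mathcal{R}^T$, $\mathcal{L}^T$, $\mathcal{H}^T$ each respect $T$ (i.e.\ each class lies entirely in $T$ or entirely in $S\!\setminus\!T$), and $\mathcal{H}^T\subseteq\mathcal{R}^T$, each $\mathcal{R}^T$-class contained in $S\!\setminus\!T$ is a disjoint union of $\mathcal{H}^T$-classes contained in $S\!\setminus\!T$. Hence the finitely many $\mathcal{H}^T$-classes in $S\!\setminus\!T$ partition into at most finitely many $\mathcal{R}^T$-classes, so $S\!\setminus\!T$ is a finite union of $\mathcal{R}^T$-classes.

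The corollary then follows immediately by applying the preceding proposition. There is no real obstacle here; the only thing worth double-checking is the elementary fact that $\mathcal{H}^T$-classes refine $\mathcal{R}^T$-classes and that both equivalences respect the partition $\{T, S\!\setminus\!T\}$, which is immediate from their definitions via $aT^1=bT^1$ and $T^1a=T^1b$.
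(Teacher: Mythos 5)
Your proposal is correct and follows exactly the route the paper intends: the corollary is stated as an immediate consequence of the preceding proposition, the point being that $\mathcal{H}^T\subseteq\mathcal{R}^T$ and both relations respect the partition $\{T, S\!\setminus\!T\}$, so finitely many $\mathcal{H}^T$-classes in $S\!\setminus\!T$ forces finitely many $\mathcal{R}^T$-classes there. Nothing is missing.
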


\begin{corollary}
\label{large}
Let $S$ be a semigroup, and let $T$ be a subsemigroup of $S$ such that $S\!\setminus\!T$ is finite.
Then $S$ is weakly right noetherian if and only if $T$ is weakly right noetherian.\par
In particular, $S$ is weakly right noetherian if and only if $S^1$ is weakly right noetherian if and only if $S^0$ is weakly right noetherian.
\end{corollary}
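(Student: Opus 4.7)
The plan is to obtain this corollary as an almost immediate consequence of Corollary~\ref{Green}. First I would observe that if $S\setminus T$ is finite, then $S\setminus T$ trivially meets at most $|S\setminus T|$ many $\mathcal{H}^T$-classes, so $T$ has finite Green index in $S$. Corollary~\ref{Green} then yields the equivalence of $S$ and $T$ being weakly right noetherian.

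For the ``in particular'' statement I would apply the main part with $T=S$ regarded as a subsemigroup of $S^1$ and of $S^0$: in each case the complement contains at most one adjoined element (the identity or the zero), so finiteness of the complement is automatic, and chaining the two equivalences gives the stated three-way equivalence.

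There is no genuine obstacle here, since all the substantive work has been done in Lemma~\ref{R, subsemigroup} and Corollary~\ref{Green}. If one wished to avoid invoking Green index altogether, the corollary can be proved directly: for $(\Leftarrow)$, the hypothesis that $S\setminus T$ is finite places it in a finite union of $\mathcal{R}$-classes, so Lemma~\ref{R, subsemigroup} applies verbatim; for $(\Rightarrow)$, given a right ideal $I$ of $T$, one would generate the right ideal $IS^1$ of $S$ by a finite set $X\subseteq I$, and then verify that $I$ is generated as a right ideal of $T$ by the finite set $X\cup\{xf\in I : x\in X,\ f\in S\setminus T\}$, by case-analysing in a decomposition $a=xs$ whether $s\in T^1$ or $s\in S\setminus T$ (in the latter case $xs=a\in I$ already lies in the augmented generating set).
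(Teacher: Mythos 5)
Your proposal is correct and matches the paper's intent exactly: the paper states this corollary without proof as an immediate consequence of Corollary~\ref{Green}, relying on precisely your observation that a finite complement meets only finitely many $\mathcal{H}^T$-classes and hence gives finite Green index, with the ``in particular'' clause following because $S^1\!\setminus\!S$ and $S^0\!\setminus\!S$ each have at most one element. Your alternative direct argument is also sound but unnecessary given the machinery already in place.
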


Let $S$ be a semigroup and $T$ a subsemigroup of $S.$  It is easy to see that Green's $\mathcal{R}$-preorder on $T$ is contained in the restriction of Green's $\mathcal{R}$-preorder on $S$ to $T$; that is, $$\leq_{\mathcal{R}_T}~\subseteq~\leq_{\mathcal{R}_S}\cap~(T\times T).$$
We say that $T$ {\em preserves $\mathcal{R}$} (in $S$), or is {\em $\mathcal{R}$-preserving}, if $$\leq_{\mathcal{R}_T}~=~\leq_{\mathcal{R}_S}\cap~(T\times T).$$
It can be easily shown that if $T$ preserves $\mathcal{R},$ then ${\mathcal{R}_T}$ is the restriction of $\mathcal{R}_S$ to $T.$
The next result states that the property of being weakly right noetherian is inherited by $\mathcal{R}$-preserving subsemigroups.

\begin{prop}
\label{R preorder}
Let $S$ be a semigroup and let $T$ be an $\mathcal{R}$-preserving subsemigroup of $S.$  If $S$ is weakly right noetherian, then so is $T.$
\end{prop}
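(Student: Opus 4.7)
The plan is to apply Theorem \ref{principal} to $T$: it suffices to show that $T$ satisfies the ascending chain condition on principal right ideals and contains no infinite antichain of principal right ideals (under $\subseteq$). The $\mathcal{R}$-preserving hypothesis is tailor-made to let us transfer such chains and antichains from $T$ back to $S$.

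The key observation I would record first is that for $a, b \in T$, the inclusion $aT^1 \subseteq bT^1$ is just $a \leq_{\mathcal{R}_T} b$, which by the $\mathcal{R}$-preserving hypothesis is equivalent to $a \leq_{\mathcal{R}_S} b$, i.e.\ $aS^1 \subseteq bS^1$. Applying this in both directions shows also that $aT^1 = bT^1$ if and only if $aS^1 = bS^1$. Hence the assignment $aT^1 \mapsto aS^1$ from principal right ideals of $T$ (generated by elements of $T$) to principal right ideals of $S$ is well-defined, injective, and both preserves and reflects strict inclusion.

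With this correspondence in hand, the rest is immediate. If $T$ contained an infinite strictly ascending chain $a_1 T^1 \subsetneq a_2 T^1 \subsetneq \cdots$ of principal right ideals, then $a_1 S^1 \subsetneq a_2 S^1 \subsetneq \cdots$ would be an infinite strictly ascending chain in $S$, contradicting Theorem \ref{principal} applied to $S$. Likewise, an infinite antichain $\{a_i T^1 : i \in \mathbb{N}\}$ of principal right ideals of $T$ would, via the same correspondence, yield an infinite antichain $\{a_i S^1 : i \in \mathbb{N}\}$ of principal right ideals of $S$, again contradicting that $S$ is weakly right noetherian. Therefore $T$ satisfies both conditions of Theorem \ref{principal}(2), and so is weakly right noetherian.

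There is really no substantive obstacle here once Theorem \ref{principal} is available: the whole point of the definition of $\mathcal{R}$-preserving is to make $\leq_{\mathcal{R}_T}$ agree with the restriction of $\leq_{\mathcal{R}_S}$, so that the poset of principal right ideals of $T$ embeds (order-theoretically) into that of $S$. The only small point worth verifying explicitly in the write-up is that strict inclusion in $T$ transfers to strict inclusion in $S$, which follows from the equivalence of equalities noted above.
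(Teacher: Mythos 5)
Your proof is correct, but it takes a genuinely different route from the paper. You deduce the result from Theorem \ref{principal}: the $\mathcal{R}$-preserving hypothesis makes $aT^1\mapsto aS^1$ an order-embedding of the poset of principal right ideals of $T$ into that of $S$ (preserving and reflecting inclusion, hence equality and strict inclusion), so any infinite strictly ascending chain or infinite antichain in $T$ would transfer to one in $S$, contradicting Theorem \ref{principal} for $S$. The paper instead argues directly on an arbitrary right ideal $I$ of $T$: it forms $I'=IS^1$, uses the weak right noetherianity of $S$ together with Lemma \ref{finite gen set} to extract a finite generating set $X\subseteq I$ with $I'=XS^1$, and then uses $\mathcal{R}$-preservation to pull each relation $a\leq_{\mathcal{R}_S}x$ back to $a\leq_{\mathcal{R}_T}x$, concluding $I=XT^1$. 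The paper's argument is shorter, self-contained, and produces an explicit finite generating set for each right ideal of $T$; yours is a clean structural reduction that makes transparent exactly why the definition of $\mathcal{R}$-preserving is the right hypothesis, at the cost of routing through the chain/antichain characterisation and arguing by contradiction. Both are complete; your explicit check that strict inclusion transfers (via the reflected equality) is the one point that needed care, and you handled it.
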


\begin{proof}
Let $I$ be a right ideal of $T,$ and let $I^{\prime}=IS^1.$
Since $S$ is weakly right noetherian, $I^{\prime}=XS^1$ for some finite subset $X\subseteq I.$
If $a\in I,$ then $a\in xS^1$ for some $x\in X,$ so $a\leq_{\mathcal{R}_S}\!x.$  By assumption, we have $a\leq_{\mathcal{R}_T}\!x,$ so $a\in xT^1.$  Thus $I=XT^1$ is finitely generated.
\end{proof}

For a regular subsemigroup $T$ of a semigroup $S,$ Green's relation $\mathcal{R}_T$ is the restriction of $\mathcal{R}_S$ to $T$ (likewise, $\mathcal{L}_T$ and $\mathcal{H}_T$ are the restrictions to $T$ of $\mathcal{L}_S$ and $\mathcal{H}_S,$ respectively) \cite[Proposition 2.4.2]{Howie}.  In fact, $T$ preserves $\mathcal{R}.$  Indeed, if $a, b\in T$ and $a\leq_{\mathcal{R}_S}b,$ then there exists $s\in S^1$ such that $a=bs.$  Letting $b^{\prime}$ be any inverse of $b,$ we have that $a=bb^{\prime}bs=b(b^{\prime}a)\in bT,$ so $a\leq_{\mathcal{R}_T}b,$ as required.  Thus, by Proposition \ref{R preorder} we have:

\begin{corollary}
\label{regular subsemigroup}
Let $S$ be a semigroup with a regular subsemigroup $T.$  If $S$ is weakly right noetherian, then so is $T.$
\end{corollary}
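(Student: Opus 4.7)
The plan is to deduce this corollary directly from Proposition \ref{R preorder}, which says that the property of being weakly right noetherian passes to $\mathcal{R}$-preserving subsemigroups. Thus the only thing I need to verify is that every regular subsemigroup $T$ of $S$ is $\mathcal{R}$-preserving, i.e.\ that $\leq_{\mathcal{R}_T} = \leq_{\mathcal{R}_S} \cap (T\times T)$. The inclusion $\subseteq$ is automatic (and was already observed in the paragraph preceding Proposition \ref{R preorder}), so only the reverse inclusion requires work.

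To establish the reverse inclusion, I would take $a, b \in T$ with $a \leq_{\mathcal{R}_S} b$. Then either $a = b$, in which case trivially $a \leq_{\mathcal{R}_T} b$, or else $a = bs$ for some $s \in S$. In the latter case, using regularity of $T$, choose an inverse $b' \in T$ of $b$, so that $bb'b = b$. Then
\[
b(b'a) = bb'bs = bs = a,
\]
and $b'a \in T$ since both $b'$ and $a$ lie in $T$. Hence $a \in bT$, which gives $a \leq_{\mathcal{R}_T} b$.

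Once $\mathcal{R}$-preservation is established, Proposition \ref{R preorder} is applied as a black box to conclude that $T$ is weakly right noetherian. There is no real obstacle here; the argument is short, and the only point worth emphasising is that we need the preorder version of the statement about Green's relations on regular subsemigroups, which is not literally quoted from \cite[Proposition 2.4.2]{Howie} (that gives the equivalence relation version) but is an immediate variant obtained by the one-line computation above.
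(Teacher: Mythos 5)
Your proof is correct and follows essentially the same route as the paper: show that a regular subsemigroup is $\mathcal{R}$-preserving and then invoke Proposition \ref{R preorder}. The paper simply cites \cite[Proposition 2.4.2]{Howie} for the $\mathcal{R}$-preservation, whereas you supply the short direct computation $a = bs = b(b'a) \in bT$ establishing the preorder version; this is a harmless, and indeed slightly more careful, variant of the same argument, since the cited result concerns the equivalence relations rather than the preorders.
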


We say that a semigroup $S$ has {\em local right identities} if $a\in aS$ for every $a\in S.$
It is easy to show that the class of semigroups with local right identities includes all regular semigroups, right simple semigroups and monoids.
The notion of having local right identities will be crucial in Section \ref{subsec:dp}.

\begin{corollary}
\label{right ideal, lri}
Let $S$ be a semigroup, and let $I$ be a right ideal of $S$ with local right identities.  If $S$ is weakly right noetherian, then so is $I.$
\end{corollary}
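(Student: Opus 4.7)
The plan is to reduce to Proposition \ref{R preorder} by showing that $I$, viewed as a subsemigroup of $S$, is $\mathcal{R}$-preserving. Concretely, I would take arbitrary $a, b \in I$ with $a \leq_{\mathcal{R}_S} b$ and establish that $a \leq_{\mathcal{R}_I} b$; combined with the always-valid inclusion $\leq_{\mathcal{R}_I}\, \subseteq\, \leq_{\mathcal{R}_S} \cap\, (I \times I)$ noted just before Proposition \ref{R preorder}, this gives equality of the two preorders on $I$, and Proposition \ref{R preorder} then delivers the conclusion.

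For the main step, write $a = bs$ with $s \in S^1$. The case $s = 1$ is immediate. If $s \in S$, the idea is to apply the local right identity hypothesis to $b$ rather than to $a$: choose $e \in I$ with $b = be$. Then
$$a = bs = (be)s = b(es).$$
Since $e \in I$ and $I$ is a right ideal of $S$, we have $es \in IS \subseteq I$. Hence $a = b(es) \in bI \subseteq bI^1$, giving $a \leq_{\mathcal{R}_I} b$, as required.

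The only subtlety worth flagging is the direction of multiplication. A tempting wrong turn applies a local right identity to $a$ instead, writing $a = ae = (bs)e = b(se)$; this fails, because $se$ lies in $SI$, and since $I$ is only a right ideal rather than a two-sided ideal, $SI$ need not be contained in $I$. Applying the local identity to the ``pivot'' $b$ instead converts the problematic left multiplication of $s$ by an element of $I$ into a right multiplication by $s$ of an element of $I$, which $I$ does absorb. Once this is noticed, the argument is essentially a one-line verification followed by an appeal to Proposition \ref{R preorder}.
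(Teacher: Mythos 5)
Your proof is correct and is essentially identical to the paper's: the paper also establishes that $I$ is $\mathcal{R}$-preserving by applying a local right identity to $b$, writing $bS^1\subseteq(bI)S^1=b(IS^1)\subseteq bI$, and then invoking Proposition \ref{R preorder}. Your observation about why the local identity must be applied to $b$ rather than $a$ is a correct and worthwhile clarification, but the underlying argument is the same.
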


\begin{proof}
We prove that $I$ preserves $\mathcal{R}$ in $S.$  We just need to show that $\leq_{\mathcal{R}_S}\cap~(I\times I)~\subseteq~\leq_{\mathcal{R}_I.}$
So, let $(a, b)\in I\times I$ and $a\leq_{\mathcal{R}_S}b.$  Then $a\in bS^1.$  Since $I$ is a right ideal with local right identities, we have $$a\in bS^1\subseteq(bI)S^1=b(IS^1)\subseteq bI,$$ 
so $a\leq_{\mathcal{R}_I}b,$ as required.
\end{proof}

A subsemigroup $T$ of a semigroup $S$ is called {\em right unitary} (in $S$) if it satisfies the following condition: for all $a\in T$ and $b\in S,$ if $ab\in T$ then $b\in T.$\par
Clearly a right unitary subsemigroup is $\mathcal{R}$-preserving, so we deduce the following corollary, first proven in \cite{Jespers}, from Proposition \ref{R preorder}.

\begin{corollary}
\cite[Lemma 1.1(1)]{Jespers}
\label{right unitary}
Let $S$ be a semigroup and let $T$ be a right unitary subsemigroup of $S.$
If $S$ is weakly right noetherian, then so is $T.$
\end{corollary}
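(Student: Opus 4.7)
The plan is to deduce the corollary immediately from Proposition \ref{R preorder} by verifying that every right unitary subsemigroup is $\mathcal{R}$-preserving. So the only thing to check is the reverse inclusion $\leq_{\mathcal{R}_S}\cap~(T\times T)~\subseteq~\leq_{\mathcal{R}_T},$ since the other direction always holds.

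To establish this, I would take $a, b \in T$ with $a \leq_{\mathcal{R}_S} b$, meaning $a \in bS^1$. If $a = b$, there is nothing to prove. Otherwise $a = bs$ for some $s \in S$. Now $b \in T$ and $bs = a \in T$, so the right unitary condition forces $s \in T$. Hence $a \in bT \subseteq bT^1$, giving $a \leq_{\mathcal{R}_T} b$, as required.

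With $\mathcal{R}$-preservation in hand, Proposition \ref{R preorder} applies directly: since $S$ is weakly right noetherian, so is the $\mathcal{R}$-preserving subsemigroup $T$. There is no substantive obstacle here; the argument is essentially a one-line verification of the hypothesis of the previously established proposition, and the role of the right unitary condition is precisely to make sure that a factorisation of a $T$-element out of a $T$-element stays inside $T$.
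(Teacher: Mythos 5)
Your proposal is correct and follows exactly the paper's route: the paper derives the corollary from Proposition \ref{R preorder} by noting that a right unitary subsemigroup is $\mathcal{R}$-preserving (a step it labels as clear, and which you have simply written out in full). Your verification of that step is accurate, including the correct handling of the $s=1$ case in $a\in bS^1.$
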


If the complement of a subsemigroup is a left ideal, then the subsemigroup is right unitary, so we have:

\begin{corollary}
\label{complement left ideal}
Let $S$ be a semigroup with a subsemigroup $T$ such that $S\!\setminus\!T$ is a left ideal of $S.$
If $S$ is weakly right noetherian, then so is $T.$
\end{corollary}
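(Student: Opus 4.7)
The plan is to reduce this to Corollary \ref{right unitary} by verifying that the hypothesis forces $T$ to be right unitary in $S$. Once that is established, the conclusion is immediate from the corollary just above.

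So the first step is to show: if $S \setminus T$ is a left ideal of $S$, then $T$ is right unitary in $S$. Fix $a \in T$ and $b \in S$ with $ab \in T$; we want $b \in T$. Suppose for contradiction that $b \in S \setminus T$. Since $S \setminus T$ is a left ideal, we have $ab \in S(S \setminus T) \subseteq S \setminus T$, contradicting $ab \in T$. Hence $b \in T$, so $T$ is right unitary.

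Having done this, invoke Corollary \ref{right unitary} with the fact that $S$ is weakly right noetherian to conclude that $T$ is weakly right noetherian, completing the proof.

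There is no real obstacle here: the argument is essentially a one-line verification of right unitarity plus a citation. The only thing to be slightly careful about is the direction of the inclusion — one must use that $S \setminus T$ is a \emph{left} (not right) ideal, so that multiplying on the left by an element of $S$ keeps one outside $T$.
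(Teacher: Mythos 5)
Your proposal is correct and matches the paper's own argument exactly: the paper likewise observes that a subsemigroup whose complement is a left ideal is right unitary, and then invokes Corollary \ref{right unitary}. Your verification that $ab\in S(S\!\setminus\!T)\subseteq S\!\setminus\!T$ whenever $b\notin T$ is precisely the intended one-line check.
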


\begin{corollary}
\label{rnideal}
Let $S$ be a semigroup with a subsemigroup $T$ such that $S\!\setminus\!T$ is a weakly right noetherian ideal of $S.$
Then $S$ is weakly right noetherian if and only if $T$ is weakly right noetherian.
\end{corollary}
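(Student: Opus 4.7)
The plan is to reduce both directions to results already established in the paper, since the structural hypothesis ``$S\!\setminus\!T$ is a weakly right noetherian ideal'' is exactly the combination needed to apply earlier ideal-extension and complement-ideal lemmas.

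For the forward direction, the observation is that an ideal is in particular a left ideal, so $S\!\setminus\!T$ being an ideal of $S$ means $T$ is a subsemigroup whose complement is a left ideal. Corollary \ref{complement left ideal} then immediately gives that $T$ is weakly right noetherian whenever $S$ is, with no further work needed (the weak right noetherianity of $I=S\!\setminus\!T$ is not even used here).

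For the reverse direction, I would argue via the Rees quotient $S/I$, where $I=S\!\setminus\!T$. Because $T$ is a subsemigroup of $S$, products of elements of $T$ lie in $T$, so the multiplication on $S/I=T\cup\{0\}$ restricted to $T$ coincides with the multiplication in $T$; that is, $T$ embeds as a subsemigroup of $S/I$ whose complement is just $\{0\}$. By Corollary \ref{large}, $S/I$ is weakly right noetherian if and only if $T$ is. Assuming $T$ is weakly right noetherian, we conclude that $S/I$ is weakly right noetherian as an ordinary semigroup, and in particular as an $S$-act it is noetherian (indeed, the argument of Corollary \ref{ideal extension} already treats $S/I$ as a noetherian $S$-act via Proposition \ref{right ideal and quotient}). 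Since $I$ is weakly right noetherian by hypothesis, Corollary \ref{ideal extension} applies and yields that $S$ is weakly right noetherian.

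There is no real obstacle here; the only subtle point is verifying that $S/I$ is weakly right noetherian as a semigroup. An alternative that avoids invoking Corollary \ref{ideal extension} entirely is a direct generator construction: given a right ideal $J$ of $S$, the intersection $J\cap I$ is a right ideal of $I$ and $J\cap T$ is a right ideal of $T$ (closure follows because $T$ is a subsemigroup and $J$ absorbs on the right), so finite generating sets $X$ for $J\cap I$ in $I$ and $Y$ for $J\cap T$ in $T$ combine to give $J=(X\cup Y)S^1$. Either route is short and mechanical.
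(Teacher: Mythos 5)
Your proof is correct and follows essentially the same route as the paper: the forward direction via Corollary \ref{complement left ideal}, and the converse by observing that $S/I\cong T\cup\{0\}$ is weakly right noetherian by Corollary \ref{large} and then applying Corollary \ref{ideal extension}. Your alternative direct argument (combining generating sets for $J\cap I$ and $J\cap T$) is also valid and is in effect a special case of Lemma \ref{finite union} applied to the decomposition $S=T\cup I$.
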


\begin{proof}
The direct implication follows from Corollary \ref{complement left ideal}.
For the converse, let $I=S\!\setminus\!T$.  
Since $T$ is weakly right noetherian, so is $S/I\cong T\cup\{0\}$ by Corollary \ref{large}.
It now follows from Corollary \ref{ideal extension} that $S$ is weakly right noetherian.
\end{proof} 

The final part of this section concerns semigroups with a kernel.

\begin{prop}
\label{kernel}
Let $S$ be a semigroup with a minimal right ideal.  If $S$ is weakly right noetherian, then the kernel $K=K(S)$ is a finite union of pairwise incomparable $\mathcal{R}_K$-classes and is hence weakly right noetherian.
\end{prop}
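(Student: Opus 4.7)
The plan is to identify $K$ explicitly as the union of the minimal right ideals of $S$, use the weakly right noetherian hypothesis (via Theorem \ref{principal}) to see there are only finitely many, upgrade the resulting decomposition from an $\mathcal{R}_S$-decomposition to an $\mathcal{R}_K$-decomposition, and finally conclude with Corollary \ref{R-classes}.

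First I would recall (or briefly verify) the standard fact that, when $S$ possesses a minimal right ideal, $K$ exists and equals the union of all minimal right ideals of $S$: indeed, if $R$ is any minimal right ideal then for any two-sided ideal $I$ and any choice of $a\in R$, $b\in I$ we have $ab\in R\cap I$, so the nonempty right ideal $R\cap I\subseteq R$ must equal $R$ by minimality, giving $R\subseteq I$. Fixing $a\in R$, the principal right ideal $aS^1$ is a nonempty right ideal of $S$ contained in $R$, so $aS^1=R$; in particular $R$ is a single $\mathcal{R}_S$-class. Distinct minimal right ideals are disjoint, hence they constitute an antichain of $\mathcal{R}_S$-classes. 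Since $S$ is weakly right noetherian, Theorem \ref{principal} forces this antichain to be finite; writing the minimal right ideals as $R_1,\dots,R_n$, we have $K=R_1\cup\dots\cup R_n$.

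The main technical step is to promote the above from $\mathcal{R}_S$ to $\mathcal{R}_K$, since a priori $\mathcal{R}_K$ is only a refinement of $\mathcal{R}_S|_{K\times K}$. The key calculation is that, for any $a\in R_i$, the set $aR_i$ is a nonempty right ideal of $S$ contained in $R_i$, so by minimality $aR_i=R_i$; combined with $aK\subseteq R_iS\subseteq R_i$, this yields $aK^1=R_i$. Hence any two elements of $R_i$ are $\mathcal{R}_K$-related. Conversely, if $a\in R_i$, $b\in R_j$, $i\neq j$ and $a\in bK^1$, then $a\in R_j$, contradicting $R_i\cap R_j=\emptyset$. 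Thus $R_1,\dots,R_n$ are pairwise incomparable $\mathcal{R}_K$-classes exhausting $K$, and Corollary \ref{R-classes} concludes that $K$ is weakly right noetherian.

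The only real subtlety lies in this last promotion: one must exploit the minimality of $R_i$ in $S$ (namely $aR_i=R_i$) to produce the witnessing element inside $K$ rather than merely inside $S$. Everything else is routine bookkeeping once the decomposition into minimal right ideals is in hand.
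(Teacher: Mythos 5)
Your proposal is correct and follows essentially the same route as the paper: decompose $K$ into its (pairwise disjoint, hence incomparable) minimal right ideals, invoke Theorem \ref{principal} to get finiteness, show each minimal right ideal is a single $\mathcal{R}_K$-class, and finish with Corollary \ref{R-classes}. The only difference is that you prove directly (via $aS^1=R_i$ and $aR_i=R_i$) the two facts the paper cites from Clifford, and your computation for the $\mathcal{R}_S$-to-$\mathcal{R}_K$ promotion is sound.
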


\begin{proof}
The kernel $K$ is the union of all the minimal right ideals of $S$ \cite[Theorem 2.1]{Clifford}.  Due to their minimality, these minimal right ideals are single $\mathcal{R}_S$-classes and are pairwise incomparable.  By Theorem \ref{principal}, there are only finitely many of them.  By \cite[Theorem 2.4]{Clifford}, each of these minimal right ideals is a single $\mathcal{R}_K$-class (so $K$ is $\mathcal{R}$-preserving).  It now follows from Corollary \ref{R-classes} that $K$ is weakly right noetherian.
\end{proof}

Let $S$ be a semigroup with a zero $0.$  The {\em right socle} of $S,$ denoted by $\Sigma_r(S),$ is the union of $\{0\}$ and all the $0$-minimal right ideals of $S.$  It turns out that $\Sigma_r(S)$ is an ideal of $S,$ as noted in \cite[Section 6.3]{C&P}.  A similar argument to the one in the proof of Proposition \ref{kernel} yields:

\begin{prop}
\label{right socle}
Let $S$ be a semigroup with zero.  If $S$ is weakly right noetherian, then the right socle $\Sigma_r(S)$ consists of $\{0\}$ and a finite union of incomparable $\mathcal{R}$-classes, and hence $\Sigma_r(S)$ is weakly right noetherian.
\end{prop}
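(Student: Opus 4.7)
The plan is to mirror the proof of Proposition \ref{kernel}, with minimal right ideals replaced by 0-minimal right ideals and the kernel replaced by the right socle. The argument will split into two stages: first, establishing the structural form of $\Sigma_r(S)$; then deducing weak right noetherianity via Corollary \ref{R-classes}.

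For the structural part, I would first observe that if $R$ is a 0-minimal right ideal of $S$ and $a\in R\setminus\{0\}$, then $aS^1$ is a non-zero right ideal of $S$ contained in $R$ and so equals $R$ by 0-minimality; hence $R\setminus\{0\}$ is a single $\mathcal{R}_S$-class. Moreover, for distinct 0-minimal right ideals $R_i, R_j$, the intersection $R_i\cap R_j$ is a right ideal contained in each and so must equal $\{0\}$; thus distinct 0-minimal right ideals are pairwise incomparable, and the corresponding principal right ideals $R_i=aS^1$ form an antichain in $S$. By Theorem \ref{principal} there can be only finitely many of them, say $R_1,\dots,R_n$, and setting $L_i=R_i\setminus\{0\}$ yields $\Sigma_r(S)=\{0\}\cup L_1\cup\cdots\cup L_n$ as a disjoint union of $\{0\}$ and finitely many pairwise incomparable $\mathcal{R}$-classes.

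The second stage, paralleling the invocation of \cite[Theorem 2.4]{Clifford} in Proposition \ref{kernel}, is to upgrade each $L_i$ from an $\mathcal{R}_S$-class to an $\mathcal{R}_{\Sigma_r(S)}$-class, i.e., to show that $\Sigma_r(S)$ is $\mathcal{R}$-preserving in $S$. Once this is done, $\Sigma_r(S)$ has only the finitely many $\mathcal{R}_{\Sigma_r(S)}$-classes $\{0\}, L_1,\dots,L_n$, and weak right noetherianity follows immediately from Corollary \ref{R-classes}. The main obstacle is precisely this upgrade: given $a,b\in L_i$ with $b=as$ for some $s\in S^1$, one needs to produce $s'\in\Sigma_r(S)^1$ with $b=as'$. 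The natural strategy is to examine $aR_i$, which is a right ideal of $S$ contained in $R_i$ and hence (by 0-minimality of $R_i$) either $\{0\}$ or $R_i$; whenever $aR_i=R_i$ one obtains $b\in aR_i\subseteq a\Sigma_r(S)$, supplying the desired $s'$. Pinning this down uniformly across all 0-minimal right ideals, using crucially that $\Sigma_r(S)$ is itself an ideal of $S$, is the delicate point of the proof.
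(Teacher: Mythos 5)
Your first stage is exactly the argument the paper intends (it is the direct analogue of the first half of the proof of Proposition \ref{kernel}) and is correct: each $0$-minimal right ideal $R$ satisfies $R=aS^1$ for every $a\in R\!\setminus\!\{0\}$, so $R\!\setminus\!\{0\}$ is a single $\mathcal{R}_S$-class, distinct $0$-minimal right ideals meet only in $\{0\}$, and Theorem \ref{principal} bounds their number.

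The problem is your second stage, which you rightly call the delicate point but do not carry out --- and which in fact cannot be carried out. The dichotomy you propose ($aR_i$, or better $a\Sigma_r(S)$, is a right ideal of $S$ contained in $R_i$, hence equal to $\{0\}$ or to $R_i$) has a bad branch that genuinely occurs: if $a\Sigma_r(S)=\{0\}$ for one $a\in R_i\!\setminus\!\{0\}$ then, writing any other $b\in R_i\!\setminus\!\{0\}$ as $b=as$ and using that $\Sigma_r(S)$ is a two-sided ideal, one gets $b\Sigma_r(S)\subseteq a\Sigma_r(S)=\{0\}$; so $R_i$ is null and every non-zero element of $R_i$ is its own $\mathcal{R}_{\Sigma_r(S)}$-class. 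This is not a removable technicality. Take $U=\mathcal{U}(G, \text{id})$ for an infinite group $G$, as in Remark \ref{pf not wrn}: $U$ is weakly right noetherian, its unique $0$-minimal right ideal is $N_G$, so $\Sigma_r(U)=N_G$ is an infinite null semigroup, which is not weakly right noetherian. Hence the upgrade from $\mathcal{R}_S$-classes to $\mathcal{R}_{\Sigma_r(S)}$-classes is impossible in general, and the final clause of the proposition fails as stated; the corresponding step in Proposition \ref{kernel} works only because a minimal right ideal $R$ of a semigroup without zero always satisfies $aR=R$, i.e.\ there is no null branch. Your argument therefore proves only the structural first clause (with $\mathcal{R}$ read as $\mathcal{R}_S$), and no completion of the second stage is possible without an additional hypothesis excluding infinite null $0$-minimal right ideals.
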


\begin{prob}
Let $S$ be a weakly right noetherian semigroup with a kernel $K.$  Is $K$ also weakly right noetherian?  If $K=\{0\}$ and $S$ has a $0$-minimal ideal $M,$ is $M$ weakly right noetherian?
\end{prob}

\section{\large{Constructions}\nopunct}
\label{sec:con}

In this section we investigate the behaviour of the property of being weakly right noetherian under the following semigroup-theoretic constructions: direct products, free products, semilattices of semigroups, Rees matrix semigroups, Brandt extensions and Bruck-Reilly extensions.

\subsection{Direct products\nopunct}
\label{subsec:dp}
\ \par
\vspace{0.5em}
The problem of whether the property of being weakly right noetherian is preserved under direct products was previously considered in \cite{Davvaz}, where it was shown that the direct product of two weakly right noetherian commutative monoids is weakly right noetherian \cite[Theorem 3.8]{Davvaz}.\par
The purpose of this subsection is to provide necessary and sufficient conditions for the direct product of two semigroups to be weakly right notherian.\par
Recall that a semigroup $S$ has local right identities if $a\in aS$ for every $a\in S.$

\begin{thm}
\label{direct product}
Let $S$ and $T$ be two semigroups with $S$ infinite.
\begin{enumerate}[leftmargin=*]
\item Suppose $T$ is infinite.  Then $S\times T$ is weakly right noetherian if and only if both $S$ and $T$ are weakly right noetherian and have local right identities.
\item Suppose $T$ is finite.  Then $S\times T$ is weakly right noetherian if and only if $S$ is weakly right noetherian and $T$ has local right identities.
\end{enumerate}
\end{thm}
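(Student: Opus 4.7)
The strategy is to use Theorem~\ref{principal}, the characterisation of weak right noetherianity via the absence of infinite antichains and strictly ascending chains of $\mathcal{R}$-classes. For the forward directions, if $I$ is a right ideal of $S$, then $I \times T$ is a right ideal of $S \times T$, and any finite generating set of $I \times T$ projects (via the first coordinate) to a finite generating set of $I$; so $S \times T$ being weakly right noetherian forces $S$ to be weakly right noetherian, and symmetrically $T$ in part (1). For the necessity of local right identities, I would argue contrapositively: if $T$ lacks local right identities, witnessed by some $t \in T$ with $t \notin tT$, then for distinct $s, s' \in S$, the equation $(s, t) = (s', t)(u, v)$ with $(u, v) \in S \times T$ forces $t = tv$, a contradiction. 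Hence $\{(s, t)(S \times T)^1 : s \in S\}$ is an infinite antichain of principal right ideals, contradicting Theorem~\ref{principal}. This argument uses only that $S$ is infinite and handles the $T$-side in both parts; for the $S$-side in (1), the symmetric argument applies, using that $T$ is infinite.

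For the backward direction of (2), given a right ideal $I$ of $S \times T$, I would define the slice $J_t := \{s \in S : (s, t) \in I\}$ for each $t \in T$. Choosing a local right identity $v$ of $t$ (so $t = tv$), one checks that $J_t \cdot S \subseteq J_t$, so $J_t$ is either empty or a right ideal of $S$; let $X_t$ be a finite generating set of it. Since $T$ is finite, $Y := \bigcup_{t \in T} X_t \times \{t\}$ is a finite subset of $I$. For $(a, b) \in I$, $a \in J_b$ gives $a = x$ or $a = xs$ for some $x \in X_b$ and $s \in S$; in the latter case, writing $b = bv$ yields $(a, b) = (x, b)(s, v)$, so $Y$ generates $I$.

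For the backward direction of (1)---the main obstacle---the slice trick fails since $T$ is infinite, so I would instead verify directly, via Theorem~\ref{principal}, that $S \times T$ has no infinite antichain or strictly ascending chain of $\mathcal{R}$-classes. The key preliminary observation is that local right identities render $\leq_{\mathcal{R}}$ on $S \times T$ componentwise: if $a \leq_{\mathcal{R}_S} a'$ and $b \leq_{\mathcal{R}_T} b'$, then local right identities yield $a = a's$ for some $s \in S$ (even when $a = a'$, choose $s$ with $a' = a's$) and similarly $b = b'v$, so $(a, b) = (a', b')(s, v)$. Thus the poset of $\mathcal{R}$-classes of $S \times T$ is isomorphic to the componentwise product of those of $S$ and $T$. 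The remaining step is an order-theoretic lemma: the property of having no infinite antichain and no infinite strictly ascending chain is preserved under products. Indeed, Ramsey's theorem applied to the $4$-colouring of unordered pairs $\{i < j\}$ by the relation between $p_i$ and $p_j$ (strictly less, equal, strictly greater, or incomparable) shows that any infinite sequence in such a poset contains an infinite weakly decreasing subsequence; iterating this on the two coordinates of an infinite sequence in $S \times T$ yields an infinite componentwise weakly decreasing subsequence, which excludes infinite antichains and infinite strictly ascending chains in $S \times T$. Theorem~\ref{principal} then concludes.
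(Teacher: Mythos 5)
Your proof is correct, but for the main implication it takes a genuinely different route from the paper. The forward directions essentially coincide with the paper's (the paper deduces that $S$ and $T$ are weakly right noetherian via Lemma \ref{quotient} and proves the necessity of local right identities by a finite-generation argument on the right ideal generated by $\{(s,t):s\in S\}$; your antichain $\{(s,t)(S\times T)^1 : s\in S\}$ is an equivalent and if anything cleaner witness). The real divergence is in the sufficiency for two infinite factors. The paper's Proposition \ref{lri dp} works directly with an arbitrary right ideal $I$ of $S\times T$: it studies the slices $I_a^T$ and $I_b^S$, proves by a chain-building contradiction that only finitely many distinct slices occur and that suitable finite ``transversals'' $X\subseteq S$, $Y\subseteq T$ exist, and then exhibits the explicit finite generating set $I\cap(X\times Y)$. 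You instead observe that local right identities make the $\mathcal{R}$-preorder on $S\times T$ exactly the product of the $\mathcal{R}$-preorders on the factors, and then invoke Theorem \ref{principal} together with the order-theoretic fact (a standard Ramsey/well-quasi-order extraction applied to the reverse order) that the conjunction ``no infinite antichain and no infinite strictly ascending chain'' is preserved under finite products of posets; both the componentwise description of $\leq_{\mathcal{R}}$ and the extraction of an infinite componentwise weakly decreasing subsequence check out. Your argument is conceptually shorter and isolates precisely where local right identities are used, at the cost of importing Ramsey's theorem; the paper's argument stays entirely within ideal-generation manipulations and yields an explicit generating set. Your treatment of the finite-$T$ case via slices $J_t$ is also a more direct alternative to the paper's detour through $S^1\times T$ and Corollary \ref{large}.
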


In order to prove Theorem \ref{direct product}, we first present a couple of preliminary results.

\begin{lemma}
\label{lri factor}
Let $S$ and $T$ be two semigroups with $S$ infinite.
If $S\times T$ is weakly right noetherian, then $T$ has local right identities.
\end{lemma}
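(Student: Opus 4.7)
The plan is to argue the contrapositive: assuming $T$ has an element $t$ with $t \notin tT$, I will exhibit an infinite antichain of principal right ideals of $S \times T$, which by Theorem \ref{principal} contradicts $S \times T$ being weakly right noetherian. The natural candidate antichain is the family
$$\{(s,t)(S\times T)^1 : s \in S\},$$
indexed by the infinite set $S$, where the second coordinate is held fixed at the bad element $t$.

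The key step is to check that for $s_1\neq s_2$ in $S$, the principal right ideals $(s_1,t)(S\times T)^1$ and $(s_2,t)(S\times T)^1$ are incomparable. By symmetry it suffices to rule out $(s_1,t)\in(s_2,t)(S\times T)^1$. Such a containment means either $(s_1,t)=(s_2,t)$, giving $s_1=s_2$, or $(s_1,t)=(s_2 s',tt')$ for some $s'\in S$ and $t'\in T$; in the latter case the second coordinate yields $t=tt'\in tT$, contradicting our choice of $t$. Hence the ideals are indeed pairwise incomparable (and therefore also pairwise distinct), giving an infinite antichain.

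I then invoke Theorem \ref{principal} to conclude that $S\times T$ is not weakly right noetherian, completing the contrapositive. I do not foresee a genuine obstacle here: once one fixes the second coordinate at an element $t\notin tT$ and varies only the first coordinate over the infinite set $S$, the incomparability argument is just a one-line coordinate check exploiting the product structure. The only subtlety worth flagging in the write-up is that one must allow the adjoined identity in $(S\times T)^1$, so the case $(s_1,t)=(s_2,t)$ (corresponding to multiplying by the identity) is handled separately from the case of genuine multiplication inside $S\times T$.
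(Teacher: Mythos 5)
Your proof is correct and rests on the same construction as the paper's: the family $\{(s,t) : s\in S\}$ with the second coordinate fixed, together with the one-line observation that $(s_1,t)\in(s_2,t)(S\times T)$ forces $t\in tT$. The only difference is packaging --- the paper argues directly by taking the right ideal generated by this set and extracting a finite generating subset, whereas you argue the contrapositive via the infinite-antichain criterion of Theorem \ref{principal} --- and both routes are valid.
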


\begin{proof}
Let $t\in T,$ and let $I$ be the right ideal of $S\times T$ generated by the set $\{(s, t) : s\in S\}.$
Since $S\times T$ is weakly right noetherian, there exists a finite set $X\subseteq S$ such that $I$ is generated by the set $\{(x, t) : x\in X\}.$\par
Choose $s\in S\!\setminus\!X.$  Then $(s, t)=(x, t)w$ for some $x\in X$ and $w\in(S\times T)^1.$ 
Since $s\neq x,$ we conclude that $w\in S\times T,$ so $w=(u, v)$ for some $u\in S$ and $v\in T.$  It follows that $t=tv\in tT.$
Since $t$ was chosen arbitrarily, $T$ has local right identities.
\end{proof}

\begin{prop}
\label{lri dp}
Let $S$ and $T$ be two semigroups with local right identities.  
If both $S$ and $T$ are weakly right noetherian, then $S\times T$ is weakly right noetherian.
\end{prop}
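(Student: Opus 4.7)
The plan is to verify via Theorem~\ref{principal} that $S \times T$ has neither an infinite strictly ascending chain nor an infinite antichain of principal right ideals. I would first observe that the local right identities in $S$ and $T$ force $(a,b)(S \times T)^1 = aS \times bT$: choosing $e \in S$ and $f \in T$ with $a = ae$ and $b = bf$ gives $(a,b) = (a,b)(e,f) \in (a,b)(S \times T)$, while the reverse inclusion $(a,b)(S \times T) \subseteq aS \times bT$ is immediate. Thus every principal right ideal of $S \times T$ is a ``rectangle'' of this form.

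For the ascending chain condition, an infinite strictly ascending chain $a_1 S \times b_1 T \subsetneq a_2 S \times b_2 T \subsetneq \cdots$ would force both coordinate chains $(a_i S)_i$ and $(b_i T)_i$ to be non-decreasing in the respective posets of principal right ideals of $S$ and $T$, with strict increase in at least one coordinate at each step. Applying Theorem~\ref{principal} separately to $S$ and to $T$, both coordinate chains must eventually stabilise, contradicting the strict-increase condition.

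The heart of the argument is ruling out an infinite antichain. Suppose for contradiction that $\{a_iS \times b_iT : i \in \mathbb{N}\}$ is an infinite antichain. I would invoke the combinatorial lemma that \emph{in any poset with ACC and no infinite antichain, every infinite sequence admits an infinite non-increasing subsequence}. This follows from Ramsey's theorem by colouring each pair $\{i < j\}$ by the relation between the two corresponding terms (one of $<$, $>$, $=$, or incomparable); the resulting infinite monochromatic subset cannot have colour $<$ (forbidden by ACC) or ``incomparable'' (forbidden by the antichain hypothesis), leaving either $>$ (strictly decreasing) or $=$ (constant), both non-increasing. Applying this lemma first to $(a_i S)_i$ in the poset of principal right ideals of $S$, and then along the extracted subsequence to $(b_i T)_i$ in the corresponding poset for $T$ (both posets satisfying the hypotheses by Theorem~\ref{principal}), I obtain an infinite sub-subsequence on which both $a_i S$ and $b_i T$ are non-increasing, so the rectangles $a_i S \times b_i T$ are themselves non-increasing under inclusion. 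But the antichain consists of pairwise distinct elements, so two consecutive rectangles in this sub-subsequence must satisfy strict containment, hence be comparable, contradicting the antichain property.

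The main obstacle is the combinatorial ``non-increasing subsequence'' lemma, whose cleanest proof invokes Ramsey's theorem; once this is granted, the rest is routine bookkeeping with Theorem~\ref{principal}.
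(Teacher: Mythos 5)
Your proposal is correct, but it takes a genuinely different route from the paper. The paper argues directly with an arbitrary right ideal $I$ of $S\times T$: it forms the ``slices'' $I_a^T=\{t\in T:(a,t)\in I\}$ and $I_b^S=\{s\in S:(s,b)\in I\}$ (which are right ideals precisely because of local right identities), proves via an ascending-chain argument that only finitely many slice-types occur and that a finite set $X\subseteq S$ (resp.\ $Y\subseteq T$) reaches every $a\in S$ (resp.\ $b\in T$) with the same slice, and then exhibits the explicit finite generating set $I\cap(X\times Y)$. You instead reduce everything to Theorem~\ref{principal}: the observation that local right identities force $(a,b)(S\times T)^1=aS\times bT$ is exactly right (and is where the hypothesis enters your argument), and from there your claim is precisely that a product of two well-quasi-orders is a well-quasi-order, which you prove by the standard Ramsey extraction of an infinite non-increasing subsequence; all steps check out, including the ACC part. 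What each approach buys: yours is shorter and more modular, isolating the role of local right identities in a single identity and outsourcing the combinatorics to a classical fact (essentially Dickson's lemma), whereas the paper's proof is elementary and self-contained, avoids Ramsey's theorem, and produces an explicit finite generating set for every right ideal rather than a pure existence statement.
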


\begin{proof}
Let $I$ be a right ideal of $S\times T.$
For each $a\in S,$ define a set $$I_a^T=\{t\in T : (a, t)\in I\}.$$
We claim that $I_a^T$ is a right ideal of $T.$  Indeed, let $t\in I_a^T$ and $u\in T.$  Since $S$ has local right identities, there exists $s\in S$ such that $as=a.$
Since $I$ is a right ideal of $S\times T,$ we have that $(a, tu)=(a, t)(s, u)\in I,$ so $tu\in I_a^T.$\par
Similarly, for each $b\in T$ we define a right ideal $$I_b^S=\{s\in S: (s, b)\in I\}$$ of $S.$
We now make the following claim.\par
{\em 
\begin{enumerate}[leftmargin=*]
 \item There exists a finite set $X\subseteq S$ with the following property:
for each $a\in S,$ there exists $x\in X$ such that $a\in xS$ and $I_a^T=I_x^T.$
 \item There exists a finite set $Y\subseteq T$ with the following property:
for each $b\in T,$ there exists $y\in Y$ such that $b\in yT$ and $I_b^S=I_y^S.$
\end{enumerate}
}
\begin{proof}[Proof of claim.]
Clearly it is enough to prove (1).  We shall just write $I_a$ for $I_a^T.$
Suppose there are infinitely many right ideals of the form $I_a.$  Note that for any $a, s\in S,$ we have $I_a\subseteq I_{as}.$
Write $S=J_1.$  Since $S$ is weakly right noetherian, there exists a finite set $X_1\subseteq J_1$ such that $J_1=X_1S^1.$
In fact, we have $J_1=X_1S,$ since $S$ having local right identities implies that $X_1\subseteq X_1S.$
By our assumption, there exists $x_1\in X_1$ such that there are infinitely many $a\in x_1S$ with $I_a\neq I_{x_1}.$  Consider the set $$J_2=\{a\in x_1S : I_a\neq I_{x_1}\}.$$
If $a\in J_2$ and $s\in S,$ then $I_{x_1}\subsetneq I_a\subseteq I_{as},$ so $as\in J_2,$ and hence $J_2$ is a right ideal of $S.$
Since $S$ is weakly right noetherian, there exists a finite set $X_2\subseteq J_2$ such that $J_2=X_2S,$ 
and there exists $x_2\in X_2$ such that there are infinitely many $a\in x_2S$ with $I_a\neq I_{x_2}.$
Continuing in this way, we obtain an infinite ascending chain $$I_{x_1}\subset I_{x_2}\subset\cdots$$ of right ideals of $T,$
but this contradicts the fact that $T$ is weakly right noetherian. 
Hence, there exists a finite set $U\subseteq S$ such that $I_a\in\{I_u : u\in U\}$ for every $a\in S.$
For each $u\in U,$ let $K_u$ be the right ideal of $S$ generated by the set 
$$H_u=\{a\in S : I_a=I_u\}.$$
Since $S$ is weakly right noetherian, there exists finite set $X_u\subseteq H_u$ such that $H_u=X_uS.$
Now set $X=\bigcup_{u\in U}X_u$.  It is clear that $X$ satisfies the condition in the statement of the claim.
\end{proof}
Returning to the proof of Proposition \ref{lri dp}, we claim that $I$ is generated by the finite set $Z=I\cap(X\times Y).$
Indeed, let $(a, b)\in I.$  Then $a\in I_b^S$ and $b\in I_a^T.$
By the above claim, there exist $x\in X$ and $s\in S$ such that $a=xs$ and $I_a^T=I_x^T,$
and there exist $y\in Y$ and $t\in T$ such that $b=yt$ and $I_b^S=I_y^S.$
We have that
$$a\in I_b^S=I_y^S\Longrightarrow(a, y)\in I\Longrightarrow y\in I_a^T=I_x^T\Longrightarrow(x, y)\in I.$$
We conclude that $$(a, b)=(x, y)(s, t)\in Z(S\times T),$$ as required.
\end{proof}

We are now ready to prove Theorem \ref{direct product}.

\begin{proof}[Proof of Theorem \ref{direct product}.]
Is $S\times T$ is weakly right noetherian, then both $S$ and $T,$ being homomorphic images of $S\times T,$ are weakly right noetherian by Lemma \ref{quotient}, and $T$ has local right identities by Lemma \ref{lri factor}.  If $T$ is infinite, then $S$ also has local right identities by Lemma \ref{lri factor}.\par
For the case that $T$ is infinite, the converse follows immediately from Proposition \ref{lri dp}.  
Now assume that $T$ is finite, and suppose that $S$ is weakly right noetherian and $T$ has local right identities.
We have that $S^1$ is weakly right noetherian by Corollary \ref{large}, and clearly $S^1$ has local right identities.
Therefore, by Proposition \ref{lri dp}, we have that $S^1\times T$ is weakly right noetherian.
Since $(S^1\times T)\!\setminus\!(S\times T)$ is finite, it follows from Corollary \ref{large} that $S\times T$ is weakly right noetherian.
\end{proof}

\subsection{Free products\nopunct}
\ \par
\vspace{0.5em}
We shall define the free product of two semigroups (resp.\ monoids) in terms of semigroup (resp.\ monoid) presentations.  For more information about semigroup and monoid presentations, we refer the reader to \cite[Section 9.1]{Clifford}.\par
Given two semigroups $S$ and $T$ defined by presentations $\langle X\,|\,Q\rangle$ and $\langle Y\,|\,R\rangle,$ respectively, the {\em semigroup free product} of $S$ and $T,$ denoted by $S\ast T,$ is the semigroup defined by the presentation $\langle X, Y\,|\,Q, R\rangle.$
If $S$ and $T$ are monoids, then the {\em monoid free product} of $S$ and $T,$ denoted by $S\ast_1T,$ is the monoid defined by the presentation $\langle X, Y\,|\,Q, R, 1_S=1_T\rangle,$ where $1_S$ is a fixed word over $X$ representing the identity of $S$ and $1_T$ is a fixed word over $Y$ representing the identity of $T.$
In the case that $S$ and $T$ are groups, the monoid free product $S\ast_1T$ coincides with the group free product of $S$ and $T$; this fact is noted in \cite[Section 8.2, p.\ 266]{Howie}.\par
In the following we provide necessary and sufficient conditions for the semigroup (resp.\ monoid) free product of two semigroups (resp.\ monoids) to be weakly right noetherian.

\begin{thm}
\label{semi free product}
Let $S$ and $T$ be two semigroups.  Then $S\ast T$ is weakly right noetherian if and only if both $S$ and $T$ are trivial.
\end{thm}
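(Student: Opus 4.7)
The plan is to handle the two directions of the biconditional separately, with the substantial work lying in the forward direction. For the backward direction, I would write $S = \{a\}$ and $T = \{b\}$ with $a^2 = a$ and $b^2 = b$, so $S \ast T$ is presented by $\langle a, b \mid a^2 = a,\, b^2 = b\rangle$ and its elements are in bijection with nonempty alternating words in $\{a, b\}$. A direct inspection of the multiplication shows that $x \in y(S \ast T)^1$ if and only if $y$ is a prefix of $x$ as an alternating word, so the principal right ideals are ordered by reverse prefix order. Since each alternating word has a unique one-letter extension (the letter opposite its last), the prefix order on alternating words splits into just two disjoint maximal chains; hence every antichain of principal right ideals has size at most $2$, and every strictly ascending chain of principal right ideals corresponds to a strictly decreasing sequence of positive integer word-lengths and therefore terminates. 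Theorem \ref{principal} then yields weak right noetherianity.

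For the forward direction I would argue contrapositively. By symmetry, assume $|S| \ge 2$, fix distinct $s_1, s_2 \in S$ and any $t \in T$, and set $w_n = (s_1 t)^n s_2 t$ for $n \ge 0$. Each $w_n$ is already in reduced alternating form, namely $s_1 t s_1 t \cdots s_1 t s_2 t$ with $2n + 2$ syllables alternating between $S$- and $T$-factors. The plan is to show that $\{w_n(S \ast T)^1 : n \ge 0\}$ is an infinite antichain of principal right ideals, so that by Theorem \ref{principal} the free product $S \ast T$ cannot be weakly right noetherian. For $n < m$, the inclusion $w_m \in w_n(S \ast T)^1$ fails on length grounds, since right multiplication in a free product never decreases the number of syllables. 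For $n > m$, suppose instead $w_n = w_m c$ for some $c \in (S \ast T)^1$, and read off the syllable at position $2m + 1$ of each side. The reduction rule in the free product affects only the single boundary syllable between $w_m$ and $c$ (namely the trailing $t$ of $w_m$), so in every case (whether $c = 1$, or $c$ begins with an $S$-syllable, or $c$ begins with a $T$-syllable) the syllable of $w_m c$ at position $2m + 1$ is the unchanged $s_2$ inherited from $w_m$. Yet position $2m + 1$ lies inside the prefix $(s_1 t)^n$ of $w_n$ (as $n \ge m + 1$ gives $2m + 1 \le 2n - 1$), so the corresponding syllable of $w_n$ is $s_1$, contradicting $s_1 \ne s_2$.

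The main obstacle is this syllable-comparison step in the forward direction, which requires a careful accounting of how reductions in the free product interact with the positional structure of the alternating normal form: the key point is that right multiplication can only alter the single trailing syllable of $w_m$ and never the one immediately before it, so the $s_2$ at position $2m + 1$ is \emph{locked in} and cannot be replaced by $s_1$. Once this observation is granted, everything else is routine bookkeeping with lengths and prefixes.
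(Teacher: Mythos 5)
Your overall strategy is sound and, for the substantive forward direction, essentially the paper's own: the paper takes $a\in S$ and distinct $b,c\in T$ and shows that $\{(ab)^iaca : i\in\mathbb{N}\}$ is a minimal generating set of the right ideal it generates, which is exactly the statement that the corresponding principal right ideals form an infinite antichain; your $w_n=(s_1t)^ns_2t$ is the mirror image of this, and your syllable-locking argument is a correct, more explicit justification of the step the paper leaves implicit. For the converse your route differs mildly: the paper decomposes $\langle e,f\mid e^2=e,\ f^2=f\rangle$ into four subsemigroups isomorphic to $\mathbb{N}$ or $\mathbb{N}_0$ and applies Lemma \ref{finite union}, whereas you analyse the poset of principal right ideals directly (two disjoint chains under reverse prefix order, so antichains have size at most $2$ and ascending chains terminate) and invoke Theorem \ref{principal}. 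Both work; yours avoids exhibiting the four-piece decomposition, at the cost of verifying the prefix description of $y(S\ast T)^1$, which does hold here because $a\cdot a=a$ and $b\cdot b=b$ keep the boundary syllable unchanged.

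One concrete slip you should repair: in the antichain verification your two cases are in fact the same case. For $n<m$ the word $w_m$ is the \emph{longer} of the two, so the purported inclusion $w_m\in w_n(S\ast T)^1$ is not refuted by the observation that right multiplication cannot decrease the number of syllables --- $w_nc$ having at least $2n+2$ syllables is perfectly consistent with it having $2m+2$. That configuration (longer word in the principal right ideal of the shorter) is precisely the one your position-$(2m{+}1)$ argument settles in the ``$n>m$'' paragraph. What the length argument genuinely disposes of is the opposite configuration, $w_n\in w_m(S\ast T)^1$ with $n<m$, where the shorter word would have to lie in the ideal generated by the longer. So swap the roles (or the inequality) in your first sentence; as written, the case ``shorter in the ideal of the longer'' is never explicitly addressed, even though you have already stated exactly the principle that settles it. With that relabelling, and granting the uniqueness of reduced forms in the semigroup free product (which makes the syllable-by-syllable comparison legitimate), the proof is complete.
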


\begin{proof}
We denote $S\ast T$ by $U.$\par
$(\Rightarrow).$  Suppose that $T$ is non-trivial, and choose $a\in S$ and distinct elements $b, c\in T.$
For $i\in\mathbb{N},$ let $u_i=(ab)^iaca.$  Let $I$ be the right ideal of $U$ generated by the set $X=\{u_i : i\in\mathbb{N}\}.$  For any $i\in\mathbb{N},$ the element $u_i$ cannot we written as $u_jv$ for any $j\neq i$ and $v\in U,$ so $X$ is a minimal generating set for $I$ and hence $I$ is not finitely generated.  Therefore, $U$ is not weakly right noetherian.\par
$(\Leftarrow).$  The semigroup $U$ is defined by the presentation 
$$\langle e, f\,|\,e^2=e, f^2=f\rangle.$$  Then $U$ is the disjoint union of the following subsemigroups: $$\langle ef\rangle\cong\mathbb{N},\, \langle fe\rangle\cong\mathbb{N},\, \{(ef)^ie : i\geq 0\}\cong\mathbb{N}_0,\, \{(fe)^if : i\geq 0\}\cong\mathbb{N}_0.$$  Since $\mathbb{N}$ and $\mathbb{N}_0$ are weakly right noetherian, it follows from Lemma \ref{finite union} that $U$ is weakly right noetherian.
\end{proof}

Before stating our next result, we first make some definitions.\par  
Let $M$ and $N$ be two disjoint monoids.  A {\em reduced sequence} over $M$ and $N$ is a sequence $(u_1, \dots, u_n)$ such that: $u_i\in(M\cup N)\!\setminus\!\{1_M, 1_N\}$ for each $i\in\{1, \dots, n\}$; $(u_i, u_{i+1})\in M\times N$ or $(u_i, u_{i+1})\in N\times M$ for each $i\in\{1, \dots, n-1\}.$\par
Every non-identity element of $M\ast_{1}N$ can be uniquely written as $u_1\dots u_n$ for some reduced sequence $(u_1, \dots, u_n)$ over $M$ and $N$ \cite[Section 9.4]{C&P}; the elements $u_i, 1\leq i\leq n,$ are called the {\em free factors} of $u_1\dots u_n.$

\begin{thm}
Let $M$ and $N$ be two monoids.  Then $M\ast_1N$ is weakly right noetherian if and only if one of the following holds:
\begin{enumerate}
\item $M$ is weakly right noetherian and $N$ is trivial, or vice versa;
\item both $M$ and $N$ contain precisely two elements;
\item both $M$ and $N$ are groups.
\end{enumerate}
\end{thm}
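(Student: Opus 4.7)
The plan is to prove both directions.

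\emph{Sufficiency.} Case~(1) is immediate since $M \ast_1 \{1\} \cong M$. For case~(3), every reduced product $u_1 \cdots u_n$ in $M \ast_1 N$ has inverse $u_n^{-1} \cdots u_1^{-1}$ (using that $M$ and $N$ are groups), so $M \ast_1 N$ is a group and hence weakly right noetherian by Corollary~\ref{R-classes}. For case~(2), if both $M$ and $N$ are groups we are in~(3); if both are the 2-element semilattice monoid, the proof of Theorem~\ref{semi free product} already shows that the semigroup free product $M \ast N$ is weakly right noetherian, and $M \ast_1 N = (M \ast N) \cup \{1\}$ is too by Corollary~\ref{large}. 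For the remaining mixed sub-case, say $M = \{1, g\}$ with $g^2 = 1$ and $N = \{1, f\}$ with $f^2 = f$, we shall exhibit a finite decomposition of $M \ast_1 N$ into weakly right noetherian subsemigroups and apply Lemma~\ref{finite union}: concretely, $M \ast_1 N$ is the union of the subgroup $\{1, g\}$ together with the four subsemigroups $I_{\alpha\beta}$ consisting of non-$1$, non-$g$ elements whose reduced form starts with $\alpha$ and ends with $\beta$, for $\alpha, \beta \in \{f, g\}$. Direct computation using the relations $g^2 = 1$ and $f^2 = f$ gives multiplication rules such as $(fg)^n f \cdot (fg)^m f = (fg)^{n+m} f$ and $(gf)^n g \cdot (gf)^m g = (gf)^{n+m-1} g$, from which each $I_{\alpha\beta}$ is seen to be isomorphic to $(\mathbb{N}_0, +)$ and hence weakly right noetherian.

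\emph{Necessity.} Let $U = M \ast_1 N$ and assume $U$ is weakly right noetherian. The retractions $U \to M$ and $U \to N$ (collapsing the other factor to $1$) are surjective monoid homomorphisms, so $M$ and $N$ are weakly right noetherian by Lemma~\ref{quotient}. If one of $M, N$ is trivial, condition~(1) holds; if both are groups, (3) holds. So assume both are non-trivial and, without loss of generality, that $M$ is not a group. A short argument shows that in any monoid in which every element is right invertible, each element is a two-sided unit (if $ab = 1$ and $bc = 1$, then $a = a(bc) = (ab)c = c$, so $ba = bc = 1$), hence $M$ contains a non-right-invertible element $a$. The claim is that $|M| = |N| = 2$, yielding~(2).

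Suppose for contradiction that $|N| \geq 3$ or $|M| \geq 3$. In the first case, pick distinct non-identities $b_1, b_2 \in N$ and set $u_i := (ab_1)^i a b_2 a$; in the second, pick a non-identity $b \in N$ and a non-identity $a' \in M$ with $a' \neq a$, and set $u_i := (ab)^i a' b a$. In either case $u_i$ is a reduced word of length $2i+3$, and we shall show that $\{u_i U^1 : i \in \mathbb{N}\}$ is an infinite antichain of principal right ideals, contradicting Theorem~\ref{principal}. For $j < i$, the reduced form of any product $u_j v$ agrees with that of $u_j$ in positions $1, \ldots, 2j+2$: because $a$ has no right inverse in $M$, the cancellation cascade starting at the final $a$ of $u_j$ cannot begin (at most, that final $a$ merges with a first factor $v_1 \in M$ into a single non-identity of $M$, after which the adjacent factors of the resulting sequence lie in different monoids and no further reduction occurs). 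The ``distinguishing'' factor of $u_j$---namely $b_2$ at position $2j+2$ in the first construction or $a'$ at position $2j+1$ in the second---is therefore preserved in $u_j v$ but differs from the corresponding factor of $u_i$ (which is $b_1$ or $a$, respectively). For $j > i$, the same non-cancellability gives $|u_j v| \geq |u_j| - 1 + |v| \geq |u_j| > |u_i|$, so $u_j v \neq u_i$ here too. Hence $u_i U^1$ and $u_j U^1$ are incomparable for all $i \neq j$.

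The hardest step is the mixed sub-case of~(2) in the sufficiency direction: the interplay of the reduction rules $g^2 = 1$ and $f^2 = f$ makes the element structure of $M \ast_1 N$ less regular than in the pure semilattice or pure group cases, and one must verify carefully that each of the four subsemigroups $I_{\alpha\beta}$ is closed under multiplication and isomorphic to $\mathbb{N}_0$ before invoking Lemma~\ref{finite union}.
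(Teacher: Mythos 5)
Your proof is correct, and its overall strategy coincides with the paper's: Lemma \ref{quotient} applied to the retractions gives that $M$ and $N$ are weakly right noetherian, an infinite family of words of the form $(ab)^i(\text{marker})(\text{blocker})$ whose principal right ideals form an antichain rules out everything outside (1)--(3), and the two-element case is settled by writing $M\ast_1 N$ as a finite union of copies of $\mathbb{N}$ and $\mathbb{N}_0$ and invoking Lemma \ref{finite union}. The execution differs in two respects worth recording. In the necessity direction the paper uses the single family $u_i=(ab)^iacab$ together with the hypothesis that at least one of $a,b,c$ fails to be right invertible, leaving the case $|M|\ge 3$, $|N|=2$ to the symmetry $M\ast_1N\cong N\ast_1M$; you instead normalise so that the non-right-invertible element $a$ lies in $M$, place it as the \emph{final} free factor, and pay for this with a second word family $(ab)^ia'ba$ when the distinguishing letter must be taken from $M$. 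Placing the blocker at the end makes the non-cancellation argument uniform and one-step (the cascade never starts), and it sidesteps the more delicate analysis needed when only the marker letter fails to be right invertible. In the sufficiency direction for $\mathbb{Z}_2\ast_1\{1,0\}$, your partition of the non-identity, non-$g$ elements by first and last letter into four monogenic subsemigroups, together with the subgroup $\{1,g\}$, is a different decomposition from the paper's $U_1,\dots,U_4$ (and it visibly covers every element, whereas the paper's four sets as written omit the two length-one words); both routes then conclude via Lemma \ref{finite union}. The minor point that two of your pieces are copies of $\mathbb{N}$ rather than $\mathbb{N}_0$ is immaterial, since both are weakly right noetherian.
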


\begin{proof}
We denote $M\ast_1N$ by $U.$  If $N$ is trivial, then $M$ is isomorphic to $U,$ so we may assume that both $M$ and $N$ are non-trivial.\par
$(\Rightarrow).$  Suppose for a contradiction that $|N|\geq 3$ and at least one of $M$ and $N$ is not a group.  A monoid in which every element is right invertible is a group.  Therefore, we can choose $a\in M\!\setminus\!\{1\}$ and distinct elements $b, c\in N\!\setminus\!\{1\}$ such that at least one of $a, b, c$ is not right invertible.
Let $u_i=(ab)^iacab$ for $i\in\mathbb{N},$ and let $I$ be the right ideal of $U$ generated by $\{u_i : i\in\mathbb{N}\}.$  Suppose that $I$ is finitely generated.  Then it can be generated by a finite set $$X=\{u_i : 1\leq i\leq k\}.$$
Then $u_{k+1}=u_iv$ for some $i\in\{1, \dots, k\}$ and $v\in U.$
Since at least one of $a, b, c$ is not right invertible, the first $2i+2$ free factors of $u_iv$ are $a, b, \dots, a, b, a, c.$  But the free factor in position $2i+2$ of $u_{k+1}$ is $b,$ so we have a contradiction.  Hence, $I$ is not finitely generated and $U$ is not weakly right noetherian.\par
$(\Leftarrow).$  If $M$ and $N$ are both groups, then $U$ is also a group and hence weakly right noetherian.\par
Now suppose that both $M$ and $N$ contain precisely two elements and that $N$ is not a group.
Then $N$ is isomorphic to the two-element semilattice $\{1, 0\},$ and $M$ is isomorphic to either $\{1, 0\}$ or $\mathbb{Z}_2.$\par
If $M\cong\{1, 0\},$ then $U$ is isomorphic to $V^1$ where $V$ is the free product of two trivial semigroups.  It follows from Theorem \ref{semi free product} and Corollary \ref{large} that $U$ is weakly right noetherian.\par
If $M\cong\mathbb{Z}_2,$ then $U$ is defined by the monoid presentation 
$$\langle a, b\,|\,a^2=1, b^2=b\rangle.$$
Let $u_1=ab, u_2=ba$ and $u_3=bab,$ and let $U_i=\{u_i^n : n\geq 0\}$ for $i=1, 2, 3.$
Also, let $U_4=\{u_1^n(aba) : n\geq 0\}.$  Then each $U_i$ is isomorphic to the free monogenic monoid $\mathbb{N}_0$, and $U=\bigcup_{i=1}^4U_i.$
Since $\mathbb{N}_0$ is weakly right noetherian, it follows from Lemma \ref{finite union} that $U$ is weakly right noetherian.
\end{proof}

\subsection{Semilattices of semigroups\nopunct}
\ \par
\vspace{0.5em}
Let $Y$ be a semilattice and let $(S_{\alpha})_{\alpha\in Y}$ be a family of disjoint semigroups, indexed by $Y.$
If $S=\bigcup_{\alpha\in Y}S_{\alpha}$ is a semigroup such that $S_{\alpha}S_{\beta}\subseteq S_{\alpha\beta}$ for all $\alpha, \beta\in Y,$
then $S$ is called a {\em semilattice of semigroups}, and we denote it by $S=\mathcal{S}(Y, S_{\alpha}).$\par 
Now let $S=\bigcup_{\alpha\in Y}S_{\alpha},$ and suppose that for each $\alpha, \beta\in Y$ with $\alpha\geq\beta$ there exists a homomorphism $\phi_{\alpha, \beta} : S_{\alpha}\to S_{\beta}.$  Furthermore, assume that:
\begin{itemize}
 \item for each $\alpha\in Y,$ the homomorphism $\phi_{\alpha, \alpha}$ is the identity map on $S_{\alpha}$;
 \item for each $\alpha, \beta, \gamma\in Y$ with $\alpha\geq\beta\geq\gamma$, we have $\phi_{\alpha, \beta}\,\phi_{\beta, \gamma}=\phi_{\alpha, \gamma}.$
\end{itemize}
For $a\in S_{\alpha}$ and $b\in S_{\beta},$ we define
$$ab=(a\phi_{\alpha, \alpha\beta})(b\phi_{\beta, \alpha\beta}).$$
With this multiplication, $S$ is a semilattice of semigroups.
In this case we call $S$ a {\em strong semilattice of semigroups} and denote it by $S=\mathcal{S}(Y, S_{\alpha}, \phi_{\alpha, \beta}).$\par 
In the remainder of this section, we investigate under what conditions a (strong) semilattice of semigroups is weakly right noetherian.\par 
The following characterisation of weakly right noetherian semilattices follows immediately from Theorem \ref{principal}.

\begin{prop}\cite[Proposition 3.1]{Gould}
\label{wn semilattice}
Let $Y$ be a (meet-)semilattice.  Then $Y$ is weakly noetherian if and only if it contains no infinite strictly ascending chain or infinite antichain of elements.
\end{prop}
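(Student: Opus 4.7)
The plan is to reduce the statement directly to Theorem \ref{principal}. Since a semilattice is commutative, the notions of weakly right noetherian and weakly noetherian coincide on $Y$, so I would prove the statement by showing equivalence with condition (3) of Theorem \ref{principal}.

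The key step is to identify Green's $\mathcal{R}$-preorder on $Y$ with the semilattice order $\leq$ defined by $e \geq f \iff ef = f$. To do this, I would first compute the principal right ideal $eY^1$ for an arbitrary $e \in Y$: because $e^2 = e$, we have $eY^1 = eY$, and I claim $eY = \{z \in Y : z \leq e\}$, the principal order ideal below $e$. Indeed, for any $y \in Y$, $ey$ is the meet of $e$ and $y$ in the induced semilattice order, so $ey \leq e$; conversely, if $f \leq e$ then $ef = f$ by definition, so $f \in eY$.

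From this computation it follows that $eY^1 \subseteq fY^1$ if and only if $e \leq f$, so Green's $\mathcal{R}$-preorder on $Y$ coincides with the semilattice order. In particular the $\mathcal{R}$-preorder is already antisymmetric, so every $\mathcal{R}$-class is a singleton, and the partial order induced on $\mathcal{R}$-classes is exactly $\leq$ on $Y$. Hence an infinite strictly ascending chain (respectively, antichain) of $\mathcal{R}$-classes is the same thing as an infinite strictly ascending chain (respectively, antichain) of elements of $Y$ in the semilattice order. Applying Theorem \ref{principal} gives the desired equivalence.

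There is no serious obstacle: once one translates between the ideal-theoretic formulation of $\mathcal{R}$ and the order-theoretic formulation of the semilattice structure, the statement falls out of Theorem \ref{principal} essentially for free. The only point worth being careful about is the correct direction of the semilattice order as defined in the paper ($e \geq f$ meaning $ef = f$), which must be tracked consistently when computing $eY$.
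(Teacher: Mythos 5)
Your proposal is correct and matches the paper's intended argument: the paper states that this result ``follows immediately from Theorem \ref{principal},'' and your identification of Green's $\mathcal{R}$-preorder with the semilattice order (so that $\mathcal{R}$-classes are singletons and condition (3) of Theorem \ref{principal} translates directly into the stated order-theoretic conditions) is precisely the verification being left implicit. The computation $eY^1 = \{z \in Y : z \leq e\}$ and the reduction via commutativity are both sound.
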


Since the semilattice $Y$ is a homomorphic image of $S=\mathcal{S}(Y, S_{\alpha}),$ 
by Lemma \ref{quotient} we have:

\begin{lemma}
\label{semilattice}
Let $S=\mathcal{S}(Y, S_{\alpha})$ be a semilattice of semigroups.
If $S$ is weakly right noetherian, then $Y$ is weakly noetherian.
\end{lemma}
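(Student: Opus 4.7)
The plan is to realise $Y$ as a homomorphic image of $S$ and then invoke Lemma \ref{quotient}. Concretely, I would define $\phi:S\to Y$ by sending each $a\in S_{\alpha}$ to $\alpha$. This map is well-defined because the family $(S_{\alpha})_{\alpha\in Y}$ is disjoint, it is surjective because each $S_{\alpha}$, being a semigroup, is non-empty, and the defining containment $S_{\alpha}S_{\beta}\subseteq S_{\alpha\beta}$ in a semilattice of semigroups gives $(ab)\phi=\alpha\beta=(a\phi)(b\phi)$. Hence $\phi$ is a surjective semigroup homomorphism, so $Y$ is (isomorphic to) a quotient of $S$.

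From here the result is immediate: Lemma \ref{quotient} says weakly right noetherian passes to quotients, so $Y$ is weakly right noetherian. Since $Y$ is a semilattice, it is commutative, and hence being weakly right noetherian coincides with being weakly noetherian. There is no real obstacle in this argument; the only point worth noting is the implicit assumption that each component $S_{\alpha}$ is non-empty, without which $\phi$ would fail to be surjective onto all of $Y$.
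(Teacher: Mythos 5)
Your proposal is correct and matches the paper's argument exactly: the paper also obtains the result by noting that $Y$ is a homomorphic image of $S$ and invoking Lemma \ref{quotient}. Your write-up simply makes explicit the projection map and the observation that for the commutative semigroup $Y$ the properties of being weakly right noetherian and weakly noetherian coincide.
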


For a semilattice of semigroups $\mathcal{S}(Y, S_{\alpha})$ to be weakly right noetherian, it is not required that all the $S_{\alpha}$ be weakly right noetherian.  In order to show this, we consider the following construction, which will be used again later in the paper.

\begin{con}
\label{con}
Let $S$ and $T$ be two semigroups with homomorphisms $\theta, \phi : S\to T.$
Let $N_T=\{x_t : t\in T\}\cup\{0\}$ be a null semigroup disjoint from $S.$
We define a mulitiplication on $S\cup N_T,$ extending those on $S$ and $N_T,$ as follows:
$$s\cdot x_t=x_{(s\theta)t},\:x_t\cdot s=x_{t(s\phi)}.$$
With this multiplication, $S\cup N_T$ is a semigroup.  We denote it by $\mathcal{U}(S, T; \theta, \phi).$  We simplify this expression in the case that $S=T$ by only writing $S$ once, and similarly if $\theta=\phi.$ 
\end{con}

We may view $\mathcal{U}(S, T; \theta, \phi)$ as a semilattice of semigroups, where the structure semilattice is $\{\alpha, 0\}$ and the corresponding subsemigroups are $S$ and $N_T,$ respectively.  Every non-zero element of a null semigroup is indecomposable, so infinite null semigroups are not weakly right noetherian by Lemma \ref{indecomposable}.  Therefore, the following result yields the desired counterexample.

\begin{prop}
\label{con prop}
Let $S$ and $T$ be two semigroups with homomorphisms $\theta, \phi : S\to T$ where $\phi$ is surjective, and let $U=\mathcal{U}(S, T; \theta, \phi).$  Then $U$ is weakly right noetherian if and only if $S$ is weakly right noetherian.
\end{prop}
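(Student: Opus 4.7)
The plan for the forward implication is to observe that $S$ is a subsemigroup of $U$ whose complement $N_T=U\setminus S$ is a left ideal of $U,$ and then to invoke Corollary \ref{complement left ideal}.  The left-ideal property is immediate from the defining rule $s\cdot x_t=x_{(s\theta)t}\in N_T$ together with the fact that $N_T$ is a null subsemigroup, so every product with an element of $N_T$ on the right lies in $N_T.$

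For the reverse direction, assume $S$ is weakly right noetherian.  Since $\phi\colon S\to T$ is a surjective homomorphism, Lemma \ref{quotient} gives that $T$ is also weakly right noetherian.  Given a right ideal $I$ of $U,$ the plan is to split $I$ according to the semilattice decomposition $U=S\cup N_T$: set $I_S=I\cap S$ and $T_I=\{t\in T:x_t\in I\}.$  Routinely, $I_S$ is either empty or a right ideal of $S,$ so in the latter case $I_S=XS^1$ for some finite $X\subseteq I_S.$

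The main observation, which I regard as the key obstacle, is that $T_I$ is a right ideal of $T,$ not merely a subset closed under the $S$-action on $\{x_t:t\in T\}.$  Indeed, for $t\in T_I$ and $t'\in T,$ surjectivity of $\phi$ produces $s\in S$ with $s\phi=t',$ whence $x_t\cdot s=x_{tt'}\in I$ and so $tt'\in T_I.$  Hence $T_I=YT^1$ for some finite $Y\subseteq T_I.$  To finish, I would verify that the finite set $G=X\cup\{x_y:y\in Y\}$ generates $I$ as a right ideal of $U.$  Elements of $I_S$ lie in $XS^1\subseteq GU^1$; for $x_t\in I$ the relation $t=y$ or $t=yt'$ (with $t'=s\phi$ by surjectivity) gives $x_t=x_y\cdot s\in GU^1$; and $0=x_y\cdot x_y\in GU$ provided $Y\neq\emptyset.$  The only case in which $Y$ is empty is $I=\{0\}$ (since $I_S\neq\emptyset$ forces $(I_S\theta)T\subseteq T_I\neq\emptyset$), and this degenerate right ideal is trivially generated by $\{0\}$ itself.
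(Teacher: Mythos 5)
Your proof is correct and follows essentially the same route as the paper: the forward direction via Corollary \ref{complement left ideal} applied to the ideal $N_T=U\!\setminus\!S,$ and the reverse direction by splitting a right ideal $I$ over $S$ and $N_T,$ using Lemma \ref{quotient} to get that $T$ is weakly right noetherian and exploiting surjectivity of $\phi.$ The only (harmless, and in fact slightly cleaner) difference is that you handle the $N_T$-part by observing that $\{t\in T : x_t\in I\}$ is a right ideal of $T$ and generating it there, whereas the paper first writes $T=AT^1$ for a finite $A$ and then pulls back to the right ideals $I_a=\{s\in S : x_as\in I\}$ of $S$; both hinge on the same use of surjectivity, and your explicit treatment of the element $0$ and of the degenerate case $I=\{0\}$ is a point the paper passes over more quickly.
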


\begin{proof}
If $U$ is weakly right noetherian, then since $U\!\setminus\!S=N_T$ is an ideal of $U,$ we have that $S$ is weakly right noetherian by Corollary \ref{complement left ideal}.\par 
Conversely, suppose $S$ is weakly right noetherian, and let $I$ be a right ideal of $U.$ 
Now $I\cap S$ is either empty or a right ideal of $S$; in the latter case it is generated by a finite set $Y$ since $S$ is weakly right noetherian.
We have that $T$ is weakly right noetherian by Lemma \ref{quotient}.  In particular, $T=AT^1$ for some finite set $A\subseteq T.$  For each $a\in A,$ define a set $$I_a=\{s\in S : x_as\in I\}.$$
Let $A^{\prime}$ be the set of elements in $A$ such that $I_a\neq\emptyset.$
For each $a\in A^{\prime},$ we have that $I_a$ is a right ideal of $S,$ 
so it is generated by some finite set $U_a$ since $S$ is weakly right noetherian.
We claim that $I$ is generated by the finite set $$Z=Y\cup\biggl(\bigcup_{a\in A^{\prime}}x_aU_a^1\biggr).$$
Let $u\in I.$  If $u\in S,$ then $u\in I\cap S=YS^1.$
Clearly $0\in ZU,$ so we just need to consider the case that $u=x_t$ for some $t\in T.$
Then $t=av$ for some $a\in A$ and $v\in T^1.$  If $v=1,$ then $u=x_a\in Z.$
Otherwise, let $s\in S$ be such that $s\theta=v,$ so $s\in I_a=U_aS^1.$
It follows that $u=x_as\in(x_aU_a)S^1,$ as required.
\end{proof}

\begin{remark}
\label{pf not wrn}
In general, principal factors of weakly right noetherian semigroups need not be weakly right noetherian.
Indeed, let $G$ be an infinite group.  Then $U=\mathcal{U}(G, \text{id})$ is weakly right noetherian.  It has three $\mathcal{H}=\mathcal{J}$-classes: $G,$ $J=\{x_g : g\in G\}$ and $\{0\}.$  The principal factor of $J$ is isomorphic to $N_G,$ which is not weakly right noetherian.
\end{remark}

In the case that a semigroup $S_{\beta}, \beta\in Y,$ has local right identities, it is a necessary condition for $S=\mathcal{S}(Y, S_{\alpha})$ to be weakly right noetherian that $S_{\beta}$ be weakly right noetherian.

\begin{lemma}
\label{sofs, lri}
Let $S=\mathcal{S}(Y, S_{\alpha})$ be a semilattice of semigroups, let $\beta\in Y,$ and suppose that $S_{\beta}$ has local right identities. 
If $S$ is weakly right noetherian, then so is $S_{\beta}.$
\end{lemma}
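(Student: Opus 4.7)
The plan is to reduce finite generation of a right ideal of $S_\beta$ to finite generation of a right ideal of $S$. Given a right ideal $I$ of $S_\beta$, I would form $J = IS^1$, the right ideal of $S$ generated by $I$ viewed as a subset of $S$. Since $S$ is weakly right noetherian, Lemma \ref{finite gen set} yields a finite subset $X \subseteq I$ with $J = XS^1$. The claim is then that this same $X$ already generates $I$ as a right ideal of $S_\beta$; that is, $I = XS_\beta^1$.

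The containment $XS_\beta^1 \subseteq I$ is immediate from $X \subseteq I$ together with the fact that $I$ is a right ideal of $S_\beta$. For the reverse, let $a \in I$. Since $a \in J = XS^1$, either $a \in X$ (and there is nothing to do) or $a = xs$ for some $x \in X$ and $s \in S$. Writing $s \in S_\alpha$ for the unique $\alpha \in Y$ with $s \in S_\alpha$, the product lies in $S_{\beta\alpha}$; but also $a \in I \subseteq S_\beta$, so $\beta\alpha = \beta$, i.e.\ $\alpha \geq \beta$. This is the step at which the hypothesis on $S_\beta$ becomes decisive: choose a local right identity $e \in S_\beta$ of $x$, so that $xe = x$. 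Then
\[
a = xs = (xe)s = x(es),
\]
and $es \in S_\beta S_\alpha \subseteq S_{\beta\alpha} = S_\beta$. Hence $a \in XS_\beta \subseteq XS_\beta^1$, completing the argument.

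The main conceptual point, rather than an obstacle, is recognising why local right identities in $S_\beta$ are exactly what one needs: an element of $I$ may, in $S$, factor as $xs$ with the multiplier $s$ living in a strictly larger component $S_\alpha$ with $\alpha > \beta$, and the local right identity lets us absorb $s$ into $S_\beta$ by rewriting $x$ as $xe$. Without this hypothesis one cannot in general replace $s$ by an element of $S_\beta$, which is consistent with the fact (e.g.\ Remark \ref{pf not wrn}) that components of weakly right noetherian semilattices of semigroups can fail to be weakly right noetherian.
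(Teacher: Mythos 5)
Your proof is correct. The paper reaches the same conclusion by a two-step reduction: it first restricts to the subsemigroup $T=\bigcup_{\alpha\geq\beta}S_{\alpha},$ whose complement in $S$ is an ideal, so that $T$ inherits the property by Corollary \ref{complement left ideal}; inside $T$ the component $S_{\beta}$ is a genuine right ideal with local right identities, and Corollary \ref{right ideal, lri} finishes the job. Your argument inlines both steps: the computation $a=xs=(xe)s=x(es)$ is exactly the absorption used in the proof of Corollary \ref{right ideal, lri}, and your observation that $\beta\alpha=\beta$ (hence $es\in S_{\beta\alpha}=S_{\beta}$) is what replaces the paper's preliminary restriction to $T$ --- it is the reason you may form $IS^1$ in all of $S$ even though $S_{\beta}$ need not be a right ideal of $S.$ In effect you have verified directly that $S_{\beta}$ is an $\mathcal{R}$-preserving subsemigroup of $S,$ so you could equally well have concluded by citing Proposition \ref{R preorder}. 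The paper's route buys two reusable general statements; yours is shorter and self-contained. Both are sound.
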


\begin{proof}
Let $Y^{\prime}=\{\alpha\in Y : \alpha\ngeq\beta\},$ and let $I=\bigcup_{\alpha\in Y^{\prime}}S_{\alpha}.$
Now, $I$ is an ideal and $T=S\!\setminus\!I$ is a subsemigroup of $S,$ so $T$ is weakly right noetherian by Corollary \ref{complement left ideal}.
Since $S_{\beta}$ is an ideal of $T$ with local right identities, it is weakly right noetherian by Corollary \ref{right ideal, lri}.
\end{proof}

The following corollary follows from Lemmas \ref{sofs, lri} and \ref{finite union}.

\begin{corollary}
\label{sofs, lri corollary}
Let $S=\mathcal{S}(Y, S_{\alpha})$ be a semilattice of semigroups where $Y$ is finite and each $S_{\alpha}$ has local right identities.  Then $S$ is weakly right noetherian if and only if each $S_{\alpha}$ is weakly right noetherian.
\end{corollary}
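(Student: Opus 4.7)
The plan is to prove this as a direct consequence of the two cited lemmas, one for each direction of the biconditional.

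For the forward implication, I would fix an arbitrary $\beta\in Y$ and observe that the hypotheses of Lemma \ref{sofs, lri} are met: $S_{\beta}$ has local right identities by assumption, and $S$ is weakly right noetherian. Applying that lemma immediately yields that $S_{\beta}$ is weakly right noetherian. Since $\beta$ was arbitrary, each $S_{\alpha}$ is weakly right noetherian.

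For the backward implication, I would use finiteness of $Y$ to write $S$ as a finite union $S=\bigcup_{\alpha\in Y}S_{\alpha}$ of subsemigroups. Each $S_{\alpha}$ is, by hypothesis, a weakly right noetherian subsemigroup of $S.$ Lemma \ref{finite union} then applies verbatim and gives that $S$ itself is weakly right noetherian.

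There is no real obstacle here, since both components have already been established as lemmas immediately preceding the corollary; the only thing to verify is that the subsemigroups $S_{\alpha}$ of a semilattice of semigroups are indeed subsemigroups in the usual sense (which is immediate from $S_{\alpha}S_{\alpha}\subseteq S_{\alpha\alpha}=S_{\alpha}$) and that $Y$ being finite is precisely what is needed to apply Lemma \ref{finite union}. Consequently, the write-up can be done in one or two sentences, invoking the two lemmas in sequence.
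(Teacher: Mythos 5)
Your proposal is correct and is exactly the argument the paper intends: the paper states that the corollary ``follows from Lemmas \ref{sofs, lri} and \ref{finite union}'' with no further detail, and your two-step application (Lemma \ref{sofs, lri} for the forward direction, Lemma \ref{finite union} with the finiteness of $Y$ for the converse) is precisely that derivation.
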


We now consider the situation for strong semilattices of semigroups.

\begin{prop}
\label{strong semilattice}
Let $S=\mathcal{S}(Y, S_{\alpha}, \phi_{\alpha, \beta})$ be a strong semilattice of semigroups.  If $S$ is weakly right noetherian, then $Y$ is weakly noetherian and each $S_{\alpha}$ is weakly right noetherian.
\end{prop}

\begin{proof}
The semilattice $Y$ is weakly noetherian by Lemma \ref{semilattice}.  Now let $\alpha\in Y.$  We prove that $S_{\alpha}$ preserves $\mathcal{R}$ in $S,$ and hence $S_{\alpha}$ is weakly right noetherian by Proposition \ref{R preorder}.  We write $\mathcal{R}=\mathcal{R}_S$ and $\mathcal{R}_{\alpha}=\mathcal{R}_{S_{\alpha}}.$  We just need to show that $\leq_{\mathcal{R}}\cap~(S_{\alpha}\times S_{\alpha})\subseteq~\leq_{\mathcal{R}_{\alpha}}$.
So, let $a, b\in S_{\alpha}$ and $a\leq_{\mathcal{R}}b.$  Then $a=bs$ for some $s\in S^1.$  If $s=1$ then $a=b,$ so assume that $s\in S.$  Then $s\in S_{\beta}$ for some $\beta\in Y.$  Since $a=bs\in S_{\alpha}S_{\beta}\subseteq S_{\alpha\beta},$ we conclude that $\alpha\beta=\alpha.$  It follows that $$a=(b\phi_{\alpha, \alpha})(s\phi_{\beta, \alpha})=b(s\phi_{\beta, \alpha})\in bS_{\alpha},$$
so $a\leq_{\mathcal{R}_{\alpha}}b,$ as required.
\end{proof}

Proposition \ref{strong semilattice} and Lemma \ref{finite union} together yield:

\begin{corollary}
Let $S=\mathcal{S}(Y, S_{\alpha}, \phi_{\alpha, \beta})$ be a strong semilattice of semigroups where $Y$ is finite.  Then $S$ is weakly right noetherian if and only if each $S_{\alpha}$ is weakly right noetherian.
\end{corollary}

Example \ref{cr strong ex} below shows that the converse of Proposition \ref{strong semilattice} does not hold, even in the case that each $S_{\alpha}$ is finite.  

\begin{prob}
Find necessary and sufficient conditions for a strong semilattice of semigroups to be weakly right noetherian.
\end{prob}

\subsection{Rees matrix semigroups and Brandt extensions\nopunct}
\ \par
\vspace{0.5em}

Let $S$ be a semigroup, let $I$ and $J$ be two non-empty index sets, and let $P=(p_{ji})$ be a $J\times I$ matrix with entries from $S.$  The set $I\times S\times J$ becomes a semigroup under the multiplication given by 
$$(i, s, j)(k, t,l)=(i, sp_{jk}t, l),$$
and is called the {\em Rees matrix semigroup over $S$ with respect to $P$}.  We denote this semigroup by $\mathcal{M}(S; I, J; P).$\par
We now modify the Rees matrix construction as follows. 
Let the matrix $P$ have entries from $S^0.$
The set $(I\times S\times J)\cup\{0\}$ with multiplication given by $$(i, s, j)(k, t, l)=
\begin{cases}
 (i, sp_{jk}t, l)&\text{if }p_{jk}\in S\!\setminus\!\{0\}\\
 0&\text{if }p_{jk}=0,
\end{cases}$$
and $0(i, s, j)=(i, s, j)0=0^2=0,$ is a semigroup.  It is called the {\em Rees matrix semigroup with zero over $S$ with respect to $P$}, and is denoted by $\mathcal{M}^0(G; I, J; P).$\par
A semigroup is said to be {\em completely simple} if it is simple and contains minimal left and right ideals.  A semigroup with zero is said to be {\em completely $0$-simple} if it is $0$-simple and contains $0$-minimal left and right ideals.  Rees \cite{Rees} proved that a semigroup is completely $0$-simple if and only if it is isomorphic to a Rees matrix semigroup with zero $\mathcal{M}^0(G; I, J; P)$ over a group $G$ such that every row and column of $P$ contains at least one element in $G.$  Consequently, a semigroup is completely simple if and only if it is isomorphic to a Rees matrix semigroup $\mathcal{M}(G; I, J; P)$ over a group $G.$\par
We shall consider conditions under which a Rees matrix semigroup with zero $T=\mathcal{M}^0(S; I, J; P)$ is weakly right noetherian.  We begin by considering what affect $T$ being weakly right noetherian has on $S$ and the index sets $I$ and $J.$

\begin{lemma}
\label{Rees components}
Let $T=\mathcal{M}^0(S; I, J; P)$ be weakly right noetherian.
\begin{enumerate}[leftmargin=*]
\item The semigroup $S$ is weakly right noetherian and $I$ is finite.
\item Let $U$ be the ideal of $S^0$ generated by the entries of $P.$  If the set $S\!\setminus\!U$ is non-empty, then both $S\!\setminus\!U$ and $J$ are finite.
\end{enumerate}
\end{lemma}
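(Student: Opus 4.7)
The plan is to handle (1) and (2) separately, lifting any hypothetical failure (infinite ascending chain of right ideals of $S$, infinite $I$, infinite $S\setminus U$, or infinite $J$) to an infinite strictly ascending chain or an infinite antichain of right ideals of $T$, and then contradicting the hypothesis directly or via Theorem \ref{principal}.

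For (1), I will first note that for any right ideal $K$ of $S$, the set $L(K) := \{0\} \cup (I \times K \times J)$ is a right ideal of $T$. This holds because right multiplication in $T$ fixes the first coordinate and sends a middle coordinate $s \in K$ to $sp_{jk}t \in K$, since $K$ is a right ideal of $S$. An infinite strictly ascending chain of right ideals of $S$ therefore lifts to one in $T$, which forces $S$ to be weakly right noetherian. For the finiteness of $I$, I will invoke Theorem \ref{principal}: because right multiplication in $T$ preserves the first coordinate, two elements $(i_1, s, j)$ and $(i_2, s, j)$ with $i_1 \neq i_2$ generate incomparable principal right ideals, so an infinite $I$ would produce an infinite antichain of principal right ideals of $T$, a contradiction.

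For (2), the key observation is that whenever $s, t \in S$ and $p_{jk} \in S$, the product $sp_{jk}t$ lies in $U$, because $p_{jk} \in U$ and $U$ is an ideal of $S^0$. From this I will deduce that if $(i_1, s_1, j_1), (i_2, s_2, j_2) \in T$ satisfy $s_1, s_2 \in S \setminus U$ and $(i_1, s_1, j_1) \in (i_2, s_2, j_2)T^1$, then the two triples must in fact be equal: any non-trivial right multiplication would express $s_1$ as $s_2 p_{j_2 k} t$ for some non-zero $p_{j_2 k}$, placing $s_1$ in $U$ and contradicting $s_1 \notin U$. Hence distinct elements of $I \times (S \setminus U) \times J$ generate pairwise incomparable principal right ideals of $T$. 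Fixing $i \in I$ and an $s \in S \setminus U$ (available by hypothesis) and letting $j$ vary yields an antichain of cardinality $|J|$; fixing $i, j$ and letting $s$ vary yields an antichain of cardinality $|S \setminus U|$. Theorem \ref{principal} then forces both $J$ and $S \setminus U$ to be finite.

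The main obstacle will be the incomparability step in (2), where one has to carefully track whether products are taken in $S$ or in $S^0$, and verify that the rewriting $s_1 = s_2 p_{j_2 k} t$ actually places $s_1$ inside $U$ whenever it is non-zero in $S$. Everything else is routine verification that the natural candidate subsets of $T$ are indeed right ideals.
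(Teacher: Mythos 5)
Your proposal is correct: both halves go through, and the underlying constructions are the ones the paper uses, but you extract the finiteness conclusions by a different mechanism. For (1), the paper forms the single right ideal $B=(I\times A\times J)\cup\{0\}$ of $T$ from a right ideal $A$ of $S$ and reads off \emph{both} a finite generating set for $A$ and the finiteness of $I$ from one finite generating set of $B$ (since right multiplication fixes the first coordinate); you instead split this into an ascending-chain lifting argument for the weak right noetherianity of $S$ plus a separate antichain argument, via Theorem \ref{principal}, for the finiteness of $I$. For (2), the paper simply observes that every element of $I\times(S\!\setminus\!U)\times J$ is indecomposable in $T$ (any nonzero product has middle coordinate in $U$) and appeals to Lemma \ref{indecomposable}, whereas you prove the slightly stronger statement that these elements are pairwise $\mathcal{R}$-incomparable and again invoke Theorem \ref{principal}. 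Your incomparability claim is verified correctly -- a nontrivial right multiple of $(i_2,s_2,j_2)$ is either $0$ or has middle coordinate $s_2p_{j_2k}t\in U$, so it can never equal an element with middle coordinate outside $U$ -- and the non-emptiness of $I$, $J$ and $S\!\setminus\!U$ needed to turn the finiteness of the product set into finiteness of each factor is available. The paper's route is marginally shorter because Lemma \ref{indecomposable} and the single finitely generated ideal $B$ do the bookkeeping for you; your route makes the order-theoretic obstructions (chains and antichains of principal right ideals) explicit, which is a perfectly legitimate and self-contained alternative.
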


\begin{proof}
(1) Let $A$ be a right ideal of $S.$  Then $B=(I\times A\times J)\cup\{0\}$ is a right ideal of $T.$  Since $T$ is weakly right noetherian, there exists a finite set $U\subseteq B$ such that $B=UB^1.$  We may assume that $U=I_0\times X\times J_0$ for some finite sets $I_0\subseteq I, X\subseteq A, J_0\subseteq J.$  We claim that $I=I_0$ and $A=XS^1.$  Indeed, let $i\in I,$ $a\in A,$ and pick any $j\in J .$ Then $(i, a , j)=(i_0, x, j_0)t$ for some $(i_0, x, j_0)\in U$ and $t\in T^1.$  It follows that $i=i_0\in I_0$ and $a\in xS^1\subseteq XS^1,$ as required.\par
(2) Notice that the set $I\times (S\!\setminus\!U)\times J$ consists of indecomposable elements of $T$; it is hence finite by Lemma \ref{indecomposable}.  In particular, both $S\!\setminus\!U$ and $J$ are finite. 
\end{proof}

The converse of Lemma \ref{Rees components}(1) does not hold in general.  In order to show this, we first present the following lemma.

\begin{lemma}
\label{Rees subsemigroup}
Let $T=\mathcal{M}^0(S; I, J; P),$ let $J^{\prime}$ be a subset of $J,$ let $P^{\prime}=(p_{ji})_{j\in J^{\prime}, i\in I}$, and let $T^{\prime}=\mathcal{M}^0(S; I, J^{\prime}; P^{\prime}).$
If $T$ is weakly right noetherian, then so is $T^{\prime}.$ 
\end{lemma}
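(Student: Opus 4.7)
The plan is to realize $T'$ as a subsemigroup of $T$ on which the two multiplications agree, and then to transfer finite generation of right ideals between $T'$ and $T$ directly. I will not try to deduce the result from one of the subsemigroup lemmas of Section~\ref{sec:sub}: $T'$ is neither a right ideal of $T$, nor is its complement a left ideal (the zero of $T$ lies in $T'$ and absorbs everything), so Corollaries~\ref{right ideal, lri} and~\ref{complement left ideal} do not apply in any obvious way.

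First I will observe that for $x, y \in T'$ the products $xy$ computed in $T$ and in $T'$ coincide, since the Rees matrix multiplication rule depends only on the entries $p_{jk}$ with $j \in J'$, and these agree in $P$ and $P'$. Consequently $T'$ sits as a subsemigroup of $T$, sharing the same zero, and every right ideal $I'$ of $T'$ is a subset of $T$ closed under multiplication on the right by $T'$.

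Given a right ideal $I'$ of $T'$, I will form the right ideal $I'T^1$ of $T$ and invoke Lemma~\ref{finite gen set} together with the weak right noetherianity of $T$ to obtain a finite set $Y \subseteq I'$ with $I'T^1 = YT^1$. I will then claim that $I' = Y(T')^1$, which gives the result. The inclusion $\supseteq$ is automatic since $Y \subseteq I'$ and $I'$ is a right ideal of $T'$. For $\subseteq$, given $a \in I'$ write $a = yu$ with $y \in Y$ and $u \in T^1$. The case $u = 1$ is immediate, and if $a = 0$ then the trivial identity $0 = y \cdot 0$ with $0 \in T' \subseteq (T')^1$ suffices. The key remaining case is $a \neq 0$ with $u \in T$: by the Rees matrix multiplication rule the third coordinate of $a = yu$ equals the third coordinate of $u$, and since $a \in T'$ this coordinate must lie in $J'$, forcing $u \in T' \subseteq (T')^1$. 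There is no serious obstacle in the proof; the only mildly delicate point is tracking the zero element, which is handled by the identity above.
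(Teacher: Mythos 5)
Your proof is correct. The paper disposes of this lemma in one line, asserting that $T'$ is a right unitary subsemigroup of $T$ and invoking Corollary \ref{right unitary}; you instead redo the underlying argument by hand, pulling a right ideal $I'$ of $T'$ up to $I'T^1$, extracting a finite generating set $Y\subseteq I'$ via Lemma \ref{finite gen set}, and pushing it back down using the observation that the third coordinate of a nonzero product $yu$ is that of $u$. What your final paragraph establishes is precisely that $T'$ is $\mathcal{R}$-preserving in $T$ in the sense of Section \ref{sec:sub}: if $a,y\in T'$ and $a=yu$ with $u\in T$, then either $a=0$, which you handle via $a=y\cdot 0$ with $0\in T'$, or $u\in T'$. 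Had you recorded this as the statement ``$T'$ preserves $\mathcal{R}$'' you could have concluded at once from Proposition \ref{R preorder} instead of repeating its proof. Your route is in one respect more careful than the paper's: $T'$ is not literally right unitary in $T$ when $J'\subsetneq J$, since for $a\in T'$ and $b\in T\setminus T'$ one can have $ab=0\in T'$ (for instance when $a=0$, or when the relevant sandwich entry $p_{jk}$ is $0$), so the hypothesis of Corollary \ref{right unitary} fails as stated; the property that actually survives the presence of the zero, and on which both arguments really rest, is $\mathcal{R}$-preservation, which your third-coordinate computation together with the $0=y\cdot 0$ case establishes.
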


\begin{proof}
It is easy to see that $T^{\prime}$ is a right unitary subsemigroup of $T,$ so it is weakly right noetherian by Corollary \ref{right unitary}.
\end{proof}

\begin{ex}
\label{Rees ex}
Let $T=\mathcal{M}^0(S; I, J; P)$ with $S$ infinite, and suppose there exists $j\in J$ such that $p_{ji}=0$ for all $i\in I.$  Then $T^{\prime}=\mathcal{M}^0(S; I, \{j\}; P^{\prime}),$ where $P^{\prime}=(p_{ji})_{i\in I},$ is not weakly right noetherian by Lemma \ref{Rees components}(2) (in fact, $T^{\prime}$ is an infinite null semigroup).  It follows from Lemma \ref{Rees subsemigroup} that $T$ is not weakly right noetherian.
\end{ex}

\begin{remark}
Let $T=\mathcal{M}(S; I, J; P)$ be a Rees matrix semigroup over an infinite semigroup $S$ with a zero $0$ adjoined, where $0\notin S.$  If there exists $j\in J$ such that $p_{ji}=0$ for all $i\in I,$ then $T^{\prime}=\mathcal{M}^0(S; I, J; P)$ is not weakly right noetherian by Example \ref{Rees ex}.  Since $T^{\prime}$ is isomorphic to the Rees quotient of $T$ by the ideal $I\times\{0\}\times J,$ we deduce from Lemma \ref{quotient} that $T$ is not weakly right noetherian.
\end{remark}

The converse of Lemma \ref{Rees components}(1) holds in the case that every row of the matrix contains a unit.

\begin{prop}
\label{Rees, monoid}
Let $S=\mathcal{M}^0(M; I, J; P)$ be a Rees matrix semigroup over a monoid $M,$ and suppose that for every $j\in J$ there exists $i\in I$ such that $p_{ji}\in U(M).$
Then $S$ is weakly right noetherian if and only if $M$ is weakly right noetherian and $I$ is finite.
\end{prop}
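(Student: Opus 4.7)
The forward direction is immediate from Lemma \ref{Rees components}(1). For the reverse direction, my plan is to take an arbitrary right ideal $K$ of $S$ and exploit the fact that the first coordinate of a triple $(i,m,j)\in I\times M\times J$ is preserved under right multiplication in $S$. Hence for each $i\in I$ the set $(\{i\}\times M\times J)\cup\{0\}$ is itself a right ideal of $S$, and $K$ decomposes as $\{0\}\cup\bigcup_{i\in I}K^{(i)}$ where $K^{(i)}=K\cap(\{i\}\times M\times J)$. Since $I$ is finite, it will suffice to produce finitely many generators lying in each non-empty $K^{(i)}$.

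For each such $i$, I consider the projection $A_i=\{m\in M:(i,m,j)\in K\text{ for some }j\in J\}$. The key step is to show $A_i$ is a right ideal of $M$: given $m\in A_i$ with witness $(i,m,j)\in K$ and any $n\in M$, the hypothesis furnishes $i^*\in I$ with $p_{j,i^*}=u\in U(M)$, and then $(i,m,j)\cdot(i^*,u^{-1}n,j)=(i,mn,j)\in K$ shows $mn\in A_i$. Since $M$ is weakly right noetherian, $A_i=X_iM^1$ for some finite set $X_i$, and for each $x\in X_i$ I fix a column $j_x\in J$ with $(i,x,j_x)\in K$. Letting $Y$ be the finite collection of all such triples $(i,x,j_x)$ (over those $i$ with $K^{(i)}\neq\emptyset$ and over $x\in X_i$), I will verify that $K=YS^1$: for a non-zero $(i,m,j)\in K$ I write $m=xn$ with $x\in X_i$ and $n\in M^1$, pick $k^*\in I$ with $p_{j_x,k^*}=u\in U(M)$, and check that $(i,x,j_x)\cdot(k^*,u^{-1}n,j)=(i,m,j)$, with the obvious modification when $n=1$.

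The main obstacle, and the place where the unit hypothesis on $P$ is used essentially, is this lifting step. The unit entries in each row of $P$ serve two purposes: first, they allow us to realize any right-multiplication $m\mapsto mn$ inside a single $K^{(i)}$, so that the projection $A_i$ really is a right ideal of $M$; second, they allow us to absorb the sandwich factor $p_{j_x,k^*}$ and to reach any prescribed third coordinate $j$, so that finite generators of $A_i$ lift to finite generators of $K^{(i)}$. Example \ref{Rees ex} has already demonstrated that without such invertibility the conclusion can fail, which reassures me that the hypothesis is playing exactly these two roles in the argument.
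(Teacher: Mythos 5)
Your proposal is correct and follows essentially the same route as the paper's proof: both reduce to the row-wise projections $A_i$, use a unit entry $p_{j i^*}$ to show each $A_i$ is a right ideal of $M$, and use a unit entry again to lift finite generating sets of the $A_i$ to generators of $K$ (the paper merely normalises all generators to a single fixed column $j_0$ rather than carrying witness columns $j_x$, which by your own lifting computation makes no essential difference). No gaps.
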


\begin{proof}
The direct implication follows from Lemma \ref{Rees components}.\par
For the converse, let $A$ be a right ideal of $S.$  
Note that if $(i, u, j)\in A,$ then $(i, u, l)\in A$ for all $l\in J.$  Indeed, there exists $k\in I$ such that $p_{jk}\in U(M),$ so $(i, u, l)=(i, u, j)(k, p_{jk}^{-1}, l)\in A.$
Now fix $j_0\in J,$ and choose $i_0\in I$ such that $p_{j_0i_0}\in U(M).$  Let $I^{\prime}$ be the set of elements of $I$ that appear in $A.$  For each $i\in I^{\prime},$ define a set $$A_i=\{u\in M : (i, u, j_0)\in A\}.$$
We claim that $A_i$ is a right ideal of $S.$  Indeed, if $u\in A_i$ and $m\in M,$ then 
$$(i, um, j_0)=(i, u, j_0)(i_0, p_{j_0i_0}^{-1}m, j_0)\in A_i.$$
Since $M$ is weakly right noetherian, there exist finite sets $X_i\subseteq A_i, i\in I^{\prime},$ such that $A_i=X_iM.$  We claim that $A$ is generated by the finite set
$$Y=\{(i, x, j_0) : i\in I^{\prime}, x\in X_i\}.$$
Indeed, if $(i, u, j)\in A$ then $(i, u, j_0)\in A,$ so $u=xm$ for some $x\in X_i$ and $m\in M.$  Hence, we have $(i, u, j)=(i, x, j_0)(i_0, p_{j_0i_0}^{-1}m, j)\in YM,$ as required.
\end{proof}

\begin{corollary}
\label{completely simple}
Let $S$ be a completely (0-)simple semigroup.  Then $S$ is weakly right noetherian if and only if it has finitely many $\mathcal{R}$-classes.
\end{corollary}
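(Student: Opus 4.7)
The plan is to combine the Rees representation theorem (quoted in the text) with Proposition \ref{Rees, monoid}. Start with the completely $0$-simple case: we have $S\cong\mathcal{M}^0(G; I, J; P)$ for some group $G$, where every row and column of $P$ contains an element of $G$. Because $G$ is a group we have $U(G)=G$, so every row of $P$ meets $U(G)$ and the hypothesis of Proposition \ref{Rees, monoid} is satisfied. That proposition then tells us that $S$ is weakly right noetherian if and only if $G$ is weakly right noetherian and $I$ is finite; since groups are always weakly right noetherian by Corollary \ref{R-classes}, this reduces to the single condition that $I$ is finite.

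Next I would match finiteness of $I$ with the stated condition on $\mathcal{R}$-classes. A standard direct computation, using the fact that each row of $P$ contains an invertible entry, shows that $(i,g,j)\,\mathcal{R}\,(k,h,l)$ in $\mathcal{M}^0(G; I, J; P)$ if and only if $i=k$: given $i=k$, pick $m\in I$ with $p_{jm}\neq 0$, and then $(i,g,j)(m, p_{jm}^{-1}g^{-1}h, l)=(i,h,l)$, and similarly in the reverse direction. Hence the non-zero $\mathcal{R}$-classes are precisely $R_i=\{i\}\times G\times J$ for $i\in I$, together with $\{0\}$, and so $S$ has finitely many $\mathcal{R}$-classes if and only if $I$ is finite. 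This completes the completely $0$-simple case.

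For the completely simple case, reduce to the previous one by adjoining a zero: if $S\cong\mathcal{M}(G; I, J; P)$, then $S^0\cong\mathcal{M}^0(G; I, J; P)$ (with no zero entries in $P$, so trivially completely $0$-simple). By Corollary \ref{large}, $S$ is weakly right noetherian if and only if $S^0$ is, and the same row-and-column calculation shows that the $\mathcal{R}$-classes of $S$ are $\{i\}\times G\times J$ for $i\in I$, so the equivalence transfers without change.

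I do not expect a genuine obstacle: Proposition \ref{Rees, monoid} does essentially all of the work once the Rees theorem has reduced us to matrix semigroups. The only care needed is the bookkeeping of handling the simple and $0$-simple cases together and confirming that the relevant index set $I$ parametrises the $\mathcal{R}$-classes.
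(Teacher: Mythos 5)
Your proposal is correct and follows essentially the same route as the paper: apply Rees's theorem to reduce to $\mathcal{M}^0(G;I,J;P)$, invoke Proposition \ref{Rees, monoid} (whose unit hypothesis is automatic over a group), and handle the completely simple case by adjoining a zero via Corollary \ref{large}. The paper's proof is merely terser, leaving implicit the identification of the $\mathcal{R}$-classes with $\{i\}\times G\times J$ that you spell out.
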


\begin{proof}
The semigroup $S^0$ is completely $0$-simple.  It follows from Proposition \ref{Rees, monoid} and Corollary \ref{large} that $S$ is weakly right noetherian if and only if it has finitely many $\mathcal{R}$-classes.
\end{proof}

We now consider Brandt extensions.  Let $S$ be a semigroup and let $I$ be a non-empty set.  The set $(I\times S\times I)\cup\{0\}$ becomes a semigroup under the multiplication given by $$(i, s, j)(k, t,l)=
\begin{cases}
(i, st, l)&\text{if }j=k\\
0&\text{ otherwise,}
\end{cases}$$ 
and $0x=x0=0$ for all $x\in(I\times S\times I)\cup\{0\}.$
It is called the {\em Brandt extension} of $S$ by $I,$ and we denote it by $\mathcal{B}(S, I).$\par
Notice that if $S$ is a monoid, then $\mathcal{B}(S, I)$ is isomorphic to $\mathcal{M}^0(S; I, I; P)$ where $P$ is the $I\times I$ identity matrix.  Brandt extensions of groups are precisely the completely $0$-simple inverse semigroups \cite[Theorem V.5.1]{Petrich}.

\begin{prop}
Let $S$ be a semigroup and let $I$ be a non-empty set.  Then the Brandt extension $\mathcal{B}(S, I)$ is weakly right noetherian if and only if $S$ is weakly right noetherian and $I$ is finite.
\end{prop}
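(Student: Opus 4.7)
The plan is to handle the two directions separately; throughout, write $B=\mathcal{B}(S, I)$. For the forward direction, suppose $B$ is weakly right noetherian. To see that $I$ is finite, fix any $s_0\in S$ and note that for distinct $i, i'\in I$ every element of $(i, s_0, i)B^1$ other than $0$ has first coordinate $i$; consequently $(i, s_0, i)B^1$ and $(i', s_0, i')B^1$ are incomparable under $\subseteq$. If $I$ were infinite this would give an infinite antichain of principal right ideals, contradicting Theorem \ref{principal}.

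To see that $S$ is weakly right noetherian, given a right ideal $A$ of $S$, I fix any $i_0\in I$ and form $A'=(\{i_0\}\times A\times I)\cup\{0\}$. The Brandt multiplication makes $A'$ a right ideal of $B$, since for $(i_0, a, j)\in A'$ and $(k, t, l)\in B$ the product is either $0$ or $(i_0, at, l)$ with $at\in A$. Since $B$ is weakly right noetherian, $A'=XB^1$ for some finite $X\subseteq A'$; let $Y$ be the (finite) set of middle coordinates appearing in $X$. For any $a\in A$, picking $j_0\in I$ and writing $(i_0, a, j_0)=(i_0, y, j)w$ with $(i_0, y, j)\in X$ and $w\in B^1$ forces either $a=y\in Y$ or $a=yt$ for some $t\in S$. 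Thus $A=YS^1$ is finitely generated.

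For the reverse direction, suppose $S$ is weakly right noetherian and $I$ is finite. The strategy is to reduce to the monoid case and apply Proposition \ref{Rees, monoid}. By Corollary \ref{large}, $S^1$ is weakly right noetherian. Viewing $B$ as a subsemigroup of $\mathcal{B}(S^1, I)$, the complement is contained in $I\times\{1\}\times I$ and hence is finite; another application of Corollary \ref{large} shows that $B$ is weakly right noetherian iff $\mathcal{B}(S^1, I)$ is. Finally, $\mathcal{B}(S^1, I)\cong\mathcal{M}^0(S^1; I, I; P)$ where $P$ is the $I\times I$ identity matrix, and every row of $P$ contains the unit $1\in U(S^1)$, so Proposition \ref{Rees, monoid} delivers the conclusion. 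No step is especially delicate; the main conceptual move is the passage from $S$ to $S^1$, which is precisely what allows the Brandt extension to be recognised as a Rees matrix semigroup over a monoid with a sandwich matrix whose rows contain units.
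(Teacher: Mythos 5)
Your proof is correct, and the reverse direction coincides exactly with the paper's: pass to $S^1$, observe that $\mathcal{B}(S^1,I)\!\setminus\!\mathcal{B}(S,I)$ is finite, and invoke Corollary \ref{large} together with Proposition \ref{Rees, monoid} applied to the identity sandwich matrix. Where you diverge is in the forward direction, and there you do somewhat more work than the paper. The paper observes that, once $I$ is known to be finite, the very same reduction to Proposition \ref{Rees, monoid} already yields the equivalence ``$\mathcal{B}(S,I)$ weakly right noetherian iff $S$ is,'' so the only thing it proves directly is finiteness of $I$; it does this by writing $T=UT^1$ for a finite $U$ and noting that every first coordinate must occur in $U.$ You instead give two self-contained arguments: an explicit infinite antichain $\{(i,s_0,i)B^1 : i\in I\}$ of principal right ideals (correct --- all nonzero elements of $(i,s_0,i)B^1$ have first coordinate $i,$ so distinct $i$ give incomparable ideals, contradicting Theorem \ref{principal}), and a direct lifting of a right ideal $A$ of $S$ to the right ideal $(\{i_0\}\times A\times I)\cup\{0\}$ of $B$ (which essentially reproves the relevant case of Lemma \ref{Rees components}(1)). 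Both are sound; the antichain argument is arguably more illuminating about \emph{why} $I$ must be finite, while the paper's route is more economical in that it recycles the Rees-matrix machinery and avoids proving ``$S$ weakly right noetherian'' separately.
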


\begin{proof}
Let $T=\mathcal{B}(S, I).$  We just need to prove that if $T$ is weakly right noetherian, then $I$ is finite.  Indeed, if $I$ is finite, then $\mathcal{B}(S^1, I)\!\setminus\!T$ is finite.  It then follows from Corollary \ref{large} and Proposition \ref{Rees, monoid} that $T$ is weakly right noetherian if and only if $S$ is weakly right noetherian.\par 
So, suppose that $T$ is weakly right noetherian. Then there exists a finite set $U\subseteq T$ such that $T=UT^1.$  Let $I_0$ be the elements of $I$ appearing in $U.$  Let $i\in I,$ and pick any $s\in S.$  Then $(i, s, i)=(i_1, x, i_2)t$ for some $(i_1, x, i_2)\in U$ and $t\in T^1.$  It follows that $i=i_1=i_2\in I_0,$ and hence $I=I_0$ is finite.
\end{proof}

\subsection{Bruck-Reilly extensions\nopunct}
\ \par
\vspace{0.5em}
Let $M$ be a monoid with identity $1_M$ and let $\theta : M\to M$ be an endomorphism.
We define a binary operation on the set $\mathbb{N}_0\times M\times\mathbb{N}_0$ by
$$(j, a, k)(p, b, q)=(j-k+t, (a\theta^{t-k})(b\theta^{t-p}), q-p+t),$$
where $t=\max(k, p)$ and $\theta^0$ denotes the identity map on $M.$
With this operation the set $\mathbb{N}_0\times M\times\mathbb{N}_0$ is a monoid with identity $(0, 1_M, 0).$  It is denoted by $BR(M, \theta)$ and is called the {\em Bruck-Reilly extension of $M$ determined by $\theta$}.\par
The {\em bicyclic monoid} is the set $\mathbb{N}_0\times\mathbb{N}_0$ with multiplication given by $$(j, k)(p, q)=(j-k+t, q-p+t),$$ where $t=\max(k, p).$  
Clearly the bicyclic monoid is a homomorphic image of $BR(M, \theta).$
We note that the bicyclic monoid is a simple inverse monoid and is defined by the presentation 
$\langle b, c\,|\,bc=1\rangle.$
It is well known that every one-sided ideal of the bicyclic monoid is principal, so we certainly have:

\begin{lemma}
\label{bicyclic}
The bicyclic monoid is weakly noetherian.
\end{lemma}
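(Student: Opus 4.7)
The plan is to justify the claim alluded to just before the lemma: every one-sided ideal of the bicyclic monoid $B=BR(\{1\},\mathrm{id})$ is in fact principal, so each is generated by a single element and hence is finitely generated. Weakly noetherian then follows immediately (on both sides).

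First I would analyse the principal right ideals. Using the given multiplication rule, a direct computation shows that for any $(j,k)\in B$ and any $(p,q)\in B$, the first coordinate of $(j,k)(p,q)$ equals $j-k+\max(k,p)\geq j$, with equality precisely when $p\leq k$. Conversely, by choosing $p=k$ and varying $q$ one realises every element of the form $(j,q)$, while choosing $p>k$ realises every $(l,q)$ with $l>j$. Therefore
\[
(j,k)B^{1}=\{(l,m)\in B : l\geq j\}=:P_{j},
\]
which depends only on the first coordinate $j$. In particular the $\mathcal{R}$-classes of $B$ are exactly the sets $R_{j}=\{(j,m):m\in\mathbb{N}_{0}\}$, and the principal right ideals form a descending chain $P_{0}\supset P_{1}\supset P_{2}\supset\cdots$.

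Next I would use the standard fact that every right ideal is a union of $\mathcal{R}$-classes. If $I$ is a non-empty right ideal of $B$, let $j_{0}=\min\{j : R_{j}\subseteq I\}$, which exists because $\mathbb{N}_{0}$ is well-ordered. Then $I\subseteq P_{j_{0}}$ (no $R_{j}$ with $j<j_{0}$ appears), and $I\supseteq R_{j_{0}}\cdot B^{1}=P_{j_{0}}$. Hence $I=P_{j_{0}}=(j_{0},0)B^{1}$ is principal, and in particular finitely generated.

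For left ideals I would appeal to the anti-isomorphism $(j,k)\mapsto(k,j)$ of $B$, which swaps right and left ideals, so the same argument applies on the other side. The main obstacle is really nothing more than getting the description of $(j,k)B^{1}$ right from the multiplication formula; once that is in hand, the structure of the ideals is forced. Thus $B$ is weakly noetherian, as claimed.
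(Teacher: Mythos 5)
Your proposal is correct and follows the same route as the paper, which simply invokes the well-known fact that every one-sided ideal of the bicyclic monoid is principal; you supply the (accurate) computation that $(j,k)B^1=\{(l,m):l\geq j\}$ and the resulting classification of right ideals, with the anti-automorphism $(j,k)\mapsto(k,j)$ handling the left-ideal case. No gaps.
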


We shall provide necessary and sufficient conditions for a Bruck-Reilly extension to be weakly right noetherian.  In order to do so, we first make the following definition.\par
Let $M$ be a monoid and let $\theta : M\to M$ be an endomorphism.
We call a sequence $(I_j)_{j\in\mathbb{N}_0}$ of right ideals of $M$ a {\em $\theta$-sequence} if $I_j\theta\subseteq I_{j+1}$ for every $j\geq 0.$

\begin{thm}
\label{BR}
Let $M$ be a monoid and let $\theta : M\to M$ be an endomorphism.  Then $BR(M, \theta)$ is weakly right noetherian if and only if the following conditions hold:
\begin{enumerate}
\item $M$ is weakly right noetherian;
\item for any $\theta$-sequence $(I_j)_{j\in\mathbb{N}_0}$ of right ideals of $M,$ there exists some $n\in\mathbb{N}_0$ with a finite set $Y\subseteq I_n$ such that $I_{n+r}=(Y\theta^r)M$ for all $r\geq 0.$
\end{enumerate}
\end{thm}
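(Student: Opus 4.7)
The plan is to reduce right ideals of $B = BR(M, \theta)$ to a slice-by-slice analysis in $M$. Unpacking the multiplication yields the key description of a principal right ideal,
$$(j, a, k)\, B^1 = \{(j + r, (a\theta^r)\, m,\, q) : r \geq 0,\ m \in M,\ q \geq 0\}.$$
Given a right ideal $I$ of $B$, define the slice $I_n = \{a \in M : (n, a, q) \in I \text{ for some } q \geq 0\}$. From the description above, $I_n$ is a right ideal of $M$ (right-multiply $(n, a, q) \in I$ by $(q, m, 0)$ to get $(n, am, 0) \in I$), and $I_n \theta \subseteq I_{n+1}$ (right-multiply by $(q + 1, 1_M, 0)$ to get $(n+1, a\theta, 0) \in I$). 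So every right ideal of $B$ canonically produces a $\theta$-sequence $(I_n)_{n \geq 0}$ of right ideals of $M$.

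For necessity, condition (1) follows because $\{(0, a, 0) : a \in M\} \cong M$ is an $\mathcal{R}$-preserving submonoid of $B$, a routine check via the multiplication formula, so Proposition \ref{R preorder} applies. For (2), given an arbitrary $\theta$-sequence $(I_n)$ of right ideals of $M$, I would verify that $I = \bigcup_{n \geq 0} \{n\} \times I_n \times \mathbb{N}_0$ is a right ideal of $B$; the only nontrivial case is when right multiplication jumps from level $n$ to level $n + r$, and this is absorbed by $I_n\theta^r \subseteq I_{n+r}$, obtained by iterating the $\theta$-sequence condition. Finite generation of $I$ then gives generators $(j_i, a_i, k_i)$ for $1 \leq i \leq s$; setting $n = \max_i j_i$ and $Y = \{a_i \theta^{n - j_i} : 1 \leq i \leq s\}$, the principal-right-ideal formula restricted to first coordinate $n + r$ delivers exactly $I_{n + r} = (Y\theta^r)\, M$.

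For sufficiency, let $I$ be a right ideal of $B$ and form its $\theta$-sequence of slices. By (2), choose $n \in \mathbb{N}_0$ and a finite $Y \subseteq I_n$ with $I_{n + r} = (Y\theta^r)\, M$ for all $r \geq 0$, and pick $(n, y, q_y) \in I$ witnessing each $y \in Y$. By (1) and Lemma \ref{finite gen set}, for each $j < n$ write $I_j = X_j M$ with $X_j \subseteq I_j$ finite, and pick witnesses $(j, x, k_{j, x}) \in I$. The plan is then to show that the finite set
$$G = \{(n, y, q_y) : y \in Y\} \cup \{(j, x, k_{j, x}) : j < n,\ x \in X_j\}$$
generates $I$. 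An arbitrary $(j, a, k) \in I$ splits into two cases: if $j \geq n$, then $a = (y\theta^{j-n})\, m$ for some $y \in Y$ and $m \in M$, and a direct computation gives $(n, y, q_y)\,(j - n + q_y, m, k) = (j, a, k)$; if $j < n$, then $a = xm$ for some $x \in X_j$ and $m \in M$, and $(j, x, k_{j,x})\,(k_{j,x}, m, k) = (j, a, k)$. In both products the third coordinate of the first factor is deliberately matched to the first coordinate of the second, so the $\max$ in the multiplication formula behaves predictably and no unwanted power of $\theta$ appears.

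The main obstacle is bookkeeping: the multiplication in $BR(M, \theta)$ behaves differently depending on whether the inner coordinates satisfy $p \leq k$ or $p > k$, and every step of the argument hinges on arranging the second factor so that the desired case applies and the $\theta$-powers simplify cleanly. The case $j \geq n$ of the sufficiency argument is also where hypothesis (2) is indispensable: without a uniform description $I_{n + r} = (Y\theta^r)\, M$, no finite family of elements of $I$ can simultaneously cover all sufficiently high slices.
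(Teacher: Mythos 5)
Your proposal is correct and takes essentially the same route as the paper: the same slice decomposition $I_j$ of a right ideal by first coordinate, the same right ideal $\bigcup_{j}\{j\}\times I_j\times\mathbb{N}_0$ built from a $\theta$-sequence for necessity, and the same generating set (low slices via condition (1) plus the $\theta$-propagated $Y$) for sufficiency, with the paper proving condition (1) via right unitarity of $\{0\}\times M\times\{0\}$ where you use $\mathcal{R}$-preservation -- an equivalent step. The one detail to tidy in the sufficiency direction is that the slices $I_j$ may be empty for small $j$, so one should pass to the minimal $n_0$ with $I_{n_0}\neq\emptyset$ before invoking condition (2), exactly as the paper does.
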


\begin{proof}
We denote $BR(M, \theta)$ by $N.$  Note that for any right ideal $I$ of $N,$ we have $(j, a, k)\in I$ if and only if $(j, a, 0)\in I.$ \par
($\Rightarrow$)  The monoid $M$ is isomorphic to the submonoid $M_0=\{0\}\times M\times\{0\}$ of $N.$
It can easily be shown $M_0$ is right unitary in $N,$ so $M\cong M_0$ is weakly right noetherian by Corollary \ref{right unitary}.\par
Now let $(I_j)_{j\in\mathbb{N}_0}$ be a $\theta$-sequence of right ideals of $M.$
We define a set $$I=\bigcup_{j\in\mathbb{N}_0}\{(j, a, k) : a\in I_j, l\in\mathbb{N}_0\}.$$
We claim that $I$ is a right ideal of $N.$  Indeed, let $(j, a, k)\in I$ and $(p, m, q)\in N.$  Then $A\in I_j.$  Let $u$ denote the element
$$(j, a, k)(p, m, q)=(j-k+t, (a\theta^{t-k})(m\theta^{t-p}), q-p+t),$$
where $t=\max(k, p).$  If $t=k,$ then $u=(j, a(m\theta^{k-p}), q-p+k)\in I$ since $I_j$ is a right ideal of $M.$  If $t=p,$ then $u=(j-k+p, (a\theta^{p-k})m, q)\in I,$ since $(a\theta^{p-k})\in I_{j-k+p}$ and $I_{j-k+p}$ is a right ideal of $M.$\par
Since $N$ is weakly right noetherian, there exists a finite set $X\subseteq I$ such that $I=XN.$  By the note given at the beginning of the proof, we may assume that the third coordinate of each element of $X$ is $0.$
Set $n=\max\{j : (j, a, 0)\in X\},$ and let 
$$Y=\{a\theta^{n-j} : (j, a, 0)\in X\}\subseteq I_n.$$
Let $r\geq 0$ and let $b\in I_{n+r}.$  Then $(n+r, b, 0)\in I,$ so there exist $(j, a, 0)\in X$ and $(p, m, q)\in N$ such that $(n+r, b, 0)=(j, a, 0)(p, m, q).$
It follows that $n+r=j+p,$ $b=(a\theta^p)m$ and $q=0.$
Hence, we have that 
$$b=(a\theta^{n+r-j})m=\bigl((a\theta^{n-j})\theta^r\bigr)m\in(Y\theta^r)M,$$
so $I_{n+r}=(Y\theta^r)M,$ as required.\par
($\Leftarrow$)  Let $I$ be a right ideal of $N.$  For each $j\in\mathbb{N}_0$ define a set $$I_j=\{a\in M : (j, a, 0)\in I\}.$$
Clearly $I_j$ is either empty or a right ideal of $N.$
Let $n_0$ be minimal such that $I_{n_0}$ is non-empty.
For any $j\geq n_0,$ we have
$$a\in I_j\implies(j, a, 0)\in I\implies(j+1, a\theta, 0)=(j, a, 0)(1, 1_M, 0)\in I\implies a\theta\in I_{j+1},$$
so $I_j\theta\subseteq I_{j+1}.$  Thus $(I_j)_{j\geq n_0}$ is a $\theta$-sequence of right ideals of $M.$
By assumption, there exists $n\geq n_0$ with a finite set $Y\subseteq I_n$ such that $I_{n+r}=(Y\theta^r)M$ for all $r\geq 0.$
Since $M$ is weakly right noetherian, for each $j\in\{n_0, \dots, n-1\}$ there exists a finite set $X_j\subseteq I_j$ such that $I_j=X_jM.$  Writing $Y=X_n,$ we claim that $I$ is generated by the finite set
$$X=\bigcup_{j=n_0}^n\{(j, x, 0) : x\in X_j\}.$$
Let $(j, a, k)\in I.$  Then $a\in I_j$ and $j\geq n_0.$  If $j<n,$ then $a=xm$ for some $x\in X_j$ and $m\in M,$ so we have $$(j, a, k)=(j, x, 0)(0, m, k)\in XM.$$  
If $j\geq n,$ then $a=(x\theta^{j-n})m$ for some $x\in X_n$ and $m\in M,$ and hence $$(j, a, k)=(n, x, 0)(j-n, m, k)\in XM,$$
as required.
\end{proof}

\begin{corollary}
Let $M$ be a monoid and let $\theta : M\to U(M)$ be a homomorphism.  Then $BR(M, \theta)$ is weakly right noetherian if and only if $M$ is weakly right noetherian.
\end{corollary}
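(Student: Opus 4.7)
The plan is to reduce everything to Theorem~\ref{BR} by observing that, under the stronger hypothesis $\theta(M)\subseteq U(M)$, condition~(2) of that theorem becomes automatic once condition~(1) is assumed. The forward implication is immediate from Theorem~\ref{BR}(1), so the work lies entirely in the reverse direction: assuming $M$ is weakly right noetherian, I must verify both conditions of Theorem~\ref{BR} for $\theta$.

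The key observation is that a right ideal of $M$ meeting $U(M)$ must equal $M$, since any unit generates the whole monoid as a principal right ideal. Because $\theta$ takes values in $U(M)$, every element of $M$ is sent to a unit. So if $(I_j)_{j\in\mathbb{N}_0}$ is any $\theta$-sequence of right ideals of $M$ and $a\in I_0$, then $a\theta\in I_0\theta\subseteq I_1$; as $a\theta$ is a unit lying in the right ideal $I_1$, we conclude $I_1=M$. By induction, $I_j=M$ for all $j\geq 1$.

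To finish, fix a finite generating set $Y$ for the right ideal $I_0$, which exists since $M$ is weakly right noetherian. Then $I_0=YM=(Y\theta^0)M$, and for each $r\geq 1$ every $y\in Y$ satisfies $y\theta^r\in U(M)$, so $(Y\theta^r)M=M=I_r$. Thus condition~(2) of Theorem~\ref{BR} is verified with $n=0$, and Theorem~\ref{BR} yields that $BR(M,\theta)$ is weakly right noetherian.

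I do not anticipate any substantial obstacle: the entire argument hinges on the single elementary remark that a right ideal containing a unit must be all of $M$, which collapses the (typically delicate) $\theta$-sequence stability condition to a triviality. The only minor point to keep track of is that $I_0$ is nonempty, so that a generating set $Y$ can be chosen nonempty and the equalities $(Y\theta^r)M=M$ for $r\geq 1$ make sense.
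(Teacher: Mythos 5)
Your proof is correct and follows essentially the same route as the paper: both arguments rest on the observation that a right ideal meeting $U(M)$ equals $M$, so every $\theta$-sequence collapses to $I_0, M, M, \dots$ and condition (2) of Theorem \ref{BR} is satisfied trivially. The only cosmetic difference is that you verify condition (2) with $n=0$ and a finite generating set of $I_0$, whereas the paper uses $n\geq 1$ with $Y=\{1_M\}$.
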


\begin{proof}
The forward direction follows immediately from Theorem \ref{BR}.  For the converse, let $(I_j)_{j\in\mathbb{N}_0}$ be a $\theta$-sequence of right ideals of $M.$  For any $j\in\mathbb{N}_0,$ we have $I_j\theta\in U(M)$ and $I_j\theta\subseteq I_{j+1},$ so $I_{j+1}\cap U(M)\neq\emptyset.$  Since $I_{j+1}$ is a right ideal of $M,$ it follows that $I_{j+1}=M.$  Thus $M=I_1=I_2=\cdots$, and $I_{1+r}=(\{1_M\}\theta^r)M$ for all $r\geq 0.$  Hence, $BR(M, \theta)$ is weakly right noetherian by Theorem \ref{BR}.
\end{proof}

We deduce from Theorem \ref{BR} that the Bruck-Reilly extension of a weakly right noetherian monoid (indeed, even a finite monoid) need not be weakly right noetherian.

\begin{lemma}
\label{BR lemma}
Let $M$ be a monoid.  Suppose there exists a right ideal $I$ of $M,$ an element $a\in M\!\setminus\!I,$ and a monoid homomorphism $\theta : M\to M$ such that $(I\cup\{a\})\theta\subseteq I.$
Then $BR(M, \theta)$ is not weakly right noetherian.
\end{lemma}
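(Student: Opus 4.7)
The plan is to invoke the characterisation in Theorem \ref{BR} and produce a $\theta$-sequence of right ideals of $M$ that violates its condition (2), thereby forcing $BR(M, \theta)$ to be not weakly right noetherian. Since no hypothesis is imposed on $M$ being weakly right noetherian, we do not attempt to falsify condition (1); rather, the counterexample lives entirely in condition (2), built from the single datum $a \in M \setminus I$ with $a\theta \in I$.

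The construction is almost forced. First I would observe that $J := I \cup aM$ is a right ideal of $M$, being a union of two right ideals (with $aM$ a right ideal because $M$ is a monoid). Using the hypothesis $(I \cup \{a\})\theta \subseteq I$ together with the fact that $\theta$ is a homomorphism, we get $(aM)\theta = (a\theta)(M\theta) \subseteq IM \subseteq I$ and $I\theta \subseteq I$, so $J\theta \subseteq I \subseteq J$. I would then take the constant family $I_j := J$ for all $j \in \mathbb{N}_0$; the inclusion just verified shows this is a $\theta$-sequence of right ideals of $M$.

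The key step is to show condition (2) of Theorem \ref{BR} fails. Suppose for a contradiction that there exist $n \in \mathbb{N}_0$ and a finite set $Y \subseteq I_n = J$ such that $I_{n+r} = (Y\theta^r)M$ for all $r \geq 0$. Taking $r = 1$, we have $Y \subseteq J$ and therefore $Y\theta \subseteq J\theta \subseteq I$; since $I$ is a right ideal, $(Y\theta)M \subseteq I$. But $I_{n+1} = J = I \cup aM$ contains the element $a \notin I$, so $(Y\theta)M \neq I_{n+1}$, a contradiction. Hence condition (2) of Theorem \ref{BR} is not satisfied, and so $BR(M, \theta)$ is not weakly right noetherian.

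There is no real technical obstacle here: once the right ideal $J = I \cup aM$ is identified, everything else is a matter of unpacking the hypothesis $(I \cup \{a\})\theta \subseteq I$ and reading off the contradiction. The only point worth double-checking is that the definition of a $\theta$-sequence tolerates a constant chain — which it does, the inclusion $I_j \theta \subseteq I_{j+1}$ being trivially satisfied in that case.
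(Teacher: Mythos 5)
Your proof is correct and takes essentially the same route as the paper: both form the right ideal $J = aM \cup I$, observe $J\theta \subseteq I \subsetneq J$, and use the constant $\theta$-sequence $J, J, \dots$ to violate condition (2) of Theorem \ref{BR}. Your write-up merely spells out the verification that the paper leaves implicit.
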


\begin{proof}
Let $J$ be the right ideal $aM\cup I,$ and consider the infinite $\theta$-sequence $J, J, \dots.$  
Since $J\theta\subseteq I\subsetneq J,$ this sequence does not satisfy the condition in (2) of Theorem \ref{BR}, so $BR(M, \theta)$ is not weakly right noetherian.
\end{proof}

\begin{remark}
The condition of Lemma \ref{BR lemma} is satisfied by the bicyclic monoid: let $a=(1, 0),$ let $I=(2, 0)B,$ and let $\theta : B\to B$ be given by $(i, j)\theta=(2i, 2j).$\par
This condition is also satisfied by any monoid $M$ such that $M\!\setminus\!U(M)$ is an ideal containing an idempotent $e$ and an element $a\notin eM$ (e.g.\ the 2-element null semigroup with an identity adjoined).  Indeed, let $I=eM$ and let $\theta : M\to M$ be the endomorphism given by $U(M)\theta=\{1\}$ and $\bigl(M\!\setminus\!U(M)\bigr)\theta=\{e\}.$  Then $(I\cup\{a\})\theta=\{e\}\subseteq I.$
\end{remark}

\section{\large{Regular Semigroups}\nopunct}
\label{sec:reg}

In this section we study weakly right noetherian regular semigroups.  We begin with a necessary and sufficient condition for a regular semigroup to be weakly right noetherian.  We then focus our attention on certain classes of regular semigroups, including inverse semigroups, completely regular semigroups and regular semigroups with a principal series.

\begin{thm}
\label{regular}
The following are equivalent for a regular semigroup $S$:
\begin{enumerate}
\item $S$ is weakly right noetherian;
\item for every subset $U\subseteq E(S),$ there exists a finite set $X\subseteq U$ with the following property: for each $u\in U$ there exists $x\in X$ such that $u=xu.$
\end{enumerate}
\end{thm}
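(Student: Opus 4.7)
The plan is to prove the two directions by exploiting the interplay between right ideals and idempotents in regular semigroups.

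For $(1) \Rightarrow (2)$, I would take any subset $U \subseteq E(S)$ and consider the right ideal $US^1$ of $S$. Since $S$ is weakly right noetherian, this ideal is finitely generated, so by Lemma \ref{finite gen set} there is a finite subset $X \subseteq U$ with $US^1 = XS^1$. Given $u \in U$, one can write $u = xs$ for some $x \in X$ and $s \in S^1$; because $x \in E(S)$ satisfies $x^2 = x$, multiplying on the left by $x$ yields $xu = x^2 s = xs = u$, as required.

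For the converse, I would start with an arbitrary right ideal $I$ of $S$ and let $U = I \cap E(S)$. By (2) there exists a finite $X \subseteq U \subseteq I$ such that every $u \in U$ satisfies $u = xu$ for some $x \in X$. The claim is then that $I = XS^1$. The key step, and the only place regularity is genuinely used, is the following: for any $a \in I$, pick an inverse $a' \in S$ of $a$ and set $e = aa'$. Then $e$ is an idempotent lying in $I$ (since $I$ is a right ideal and $a \in I$), and $ea = aa'a = a$. Applying condition (2) to this $e \in U$ produces $x \in X$ with $e = xe$, whence $a = ea = xea = x(ea) \in xS^1 \subseteq XS^1$.

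I do not expect a real obstacle here; the argument is essentially routine once the correct framing is chosen. The one conceptual step worth highlighting is that regularity is used precisely to produce, for each $a \in I$, an idempotent $e \in I$ acting as a left identity for $a$, so that condition (2) applied to the set $U = I \cap E(S)$ already suffices to finitely generate the whole of $I$.
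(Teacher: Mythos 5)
Your proposal is correct and follows essentially the same route as the paper: the forward direction extracts a finite $X\subseteq U$ generating $US^1$ and uses $x^2=x$ to get $u=xu$, and the converse applies condition (2) to $U=I\cap E(S)$ and uses the idempotent $aa'$ (the paper's $ab$) as a left identity for $a$. No gaps.
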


\begin{proof}
$(1)\Rightarrow(2).$  Let $U$ be a subset of $E(S),$ and let $I$ be the right ideal $US^1$ of $S.$  Since $S$ is weakly right noetherian, there exist a finite subset $X\subseteq U$ such that $I=XS^1.$  For each $u\in U,$ we have that $u=xs$ for some $x\in X$ and $s\in S^1,$ and hence $u=x^2s=xu.$\par
$(2)\Leftarrow(1).$  Let $I$ be a right ideal of $S.$  By assumption, there exists a finite set $X\subseteq I\cap E(S)$ satisfying the property in (2).  We claim that $I=XS.$  Indeed, let $a\in I.$  Let $b$ be an inverse of $a,$ so $a=aba.$  Then $ab\in I\cap E(S),$ so there exists $x\in X$ such that $ab=x(ab)$.  Thus $a=aba=xa\in XS,$ as required.
\end{proof}

Let $S$ be a regular semigroup, and let $T=\langle E(S)\rangle$ be the subsemigroup of $S$ generated by its set of idempotents.  Then $T$ is regular by \cite[Corollary 2]{Fitz-Gerald}.  Since $E(S)=E(T),$ we immediate deduce from Theorem \ref{regular}:

\begin{corollary}
A regular semigroup $S$ is weakly right noetherian if and only if its subsemigroup $T=\langle E(S)\rangle$ is weakly right noetherian.
\end{corollary}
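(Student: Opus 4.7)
The plan is to apply Theorem \ref{regular} to both $S$ and $T$ and observe that the characterising condition is, literally, the same statement for both semigroups.

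First, I would invoke the cited result of Fitz-Gerald to note that $T = \langle E(S) \rangle$ is a regular semigroup, so Theorem \ref{regular} is applicable to $T$ as well as to $S$. Next, I would check that $E(T) = E(S)$: one inclusion is immediate because $E(S) \subseteq T$ and every element of $E(S)$ is an idempotent of $T$; the other holds because idempotents of a subsemigroup are idempotents of the ambient semigroup.

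Now I would write out condition (2) of Theorem \ref{regular} for each of $S$ and $T$. For $S$ it reads: for every $U \subseteq E(S)$ there exists a finite $X \subseteq U$ such that for each $u \in U$ some $x \in X$ satisfies $u = xu$. For $T$ it is the identical sentence with $E(S)$ replaced by $E(T)$. Since $E(S) = E(T)$, and the product $xu$ of two elements of $E(S) = E(T)$ is computed via the multiplication of $S$ (which, restricted to $T$, coincides with the multiplication of $T$), the two conditions are word-for-word the same. Applying Theorem \ref{regular} in both directions then yields the equivalence: $S$ is weakly right noetherian if and only if $T$ is weakly right noetherian.

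There is no real obstacle here; the only thing to be careful about is making sure that $T$ is indeed regular so that Theorem \ref{regular} applies to it, which is exactly what the reference to \cite[Corollary 2]{Fitz-Gerald} supplies.
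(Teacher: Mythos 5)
Your proof is correct and follows exactly the same route as the paper: cite Fitz-Gerald for the regularity of $T=\langle E(S)\rangle$, observe that $E(T)=E(S)$, and note that condition (2) of Theorem \ref{regular} is then literally the same statement for $S$ and for $T$. Nothing is missing.
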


\begin{corollary}
\label{inverse}
An inverse semigroup $S$ is weakly right noetherian if and only if its semilattice of idempotents $E(S)$ is weakly noetherian.
\end{corollary}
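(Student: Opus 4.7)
The plan is to deduce this immediately from the preceding corollary, which asserts that a regular semigroup $S$ is weakly right noetherian if and only if the subsemigroup $\langle E(S)\rangle$ is weakly right noetherian. The key observation is that in an inverse semigroup the idempotents commute (this is the characterisation recalled in Section \ref{sec:preliminaries}), so $E(S)$ is itself a semilattice, and in particular a subsemigroup of $S$. Therefore $\langle E(S)\rangle = E(S)$, and the previous corollary reduces the claim to: $S$ is weakly right noetherian if and only if $E(S)$ is weakly right noetherian.

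It then remains to upgrade ``weakly right noetherian'' to ``weakly noetherian'' on the semilattice $E(S)$. This is immediate from commutativity: as observed in the paragraph preceding Theorem \ref{fg comm}, the weakly right noetherian and weakly left noetherian properties coincide on any commutative semigroup. Applied to $E(S)$, this completes the reduction.

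I do not expect any genuine obstacle; the proof is essentially a two-step unwrapping of the preceding corollary together with the fact that idempotents of an inverse semigroup form a subsemigroup. If one preferred a self-contained route avoiding the preceding corollary, one could instead invoke Theorem \ref{principal}: in an inverse semigroup every principal right ideal satisfies $aS^1 = aa^{-1}S^1$, so the poset of principal right ideals is order-isomorphic to $E(S)$, and one reads off the equivalence using the characterisation of weakly noetherian semilattices in terms of the absence of infinite strictly ascending chains and infinite antichains. The route via $\langle E(S)\rangle = E(S)$ is shorter, so that is what I would write up.
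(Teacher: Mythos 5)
Your proposal is correct and matches the paper's intended derivation: the corollary is stated without proof precisely because, as you observe, $\langle E(S)\rangle=E(S)$ for an inverse semigroup, so the preceding corollary reduces the claim to $E(S)$ being weakly right noetherian, and commutativity of the semilattice upgrades this to weakly noetherian. No issues.
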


\begin{corollary}
\label{inverse left-right}
An inverse semigroup is weakly right noetherian if and only if it is weakly noetherian.
\end{corollary}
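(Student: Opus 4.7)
The plan is to reduce the statement to Corollary \ref{inverse} together with its left-right dual. The class of inverse semigroups is self-dual: the opposite semigroup $S^{\mathrm{op}}$ of an inverse semigroup $S$ is again inverse, and its set of idempotents, viewed as a semilattice, is the same as $E(S)$ (the semilattice operation is commutative, so it coincides with its own opposite). Since $S$ is weakly left noetherian precisely when $S^{\mathrm{op}}$ is weakly right noetherian, applying Corollary \ref{inverse} to $S^{\mathrm{op}}$ gives the ``dual'' statement: $S$ is weakly left noetherian if and only if $E(S)$ is weakly noetherian.

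Combining this with Corollary \ref{inverse} itself, both of the one-sided conditions ``$S$ is weakly right noetherian'' and ``$S$ is weakly left noetherian'' are equivalent to the single condition ``$E(S)$ is weakly noetherian.'' Hence they are equivalent to each other, and therefore to their conjunction, which by definition is ``$S$ is weakly noetherian.'' This yields the corollary.

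The only thing to be verified is that Corollary \ref{inverse} genuinely dualises. This is transparent: in the characterisation provided by Theorem \ref{regular}, the condition ``$u = xu$'' restricted to the semilattice $E(S)$ is equivalent to ``$u = ux$'' by commutativity of $E(S)$, so the argument establishing Corollary \ref{inverse} works verbatim on the left as on the right. I do not foresee any real obstacle; the step is essentially a symmetry observation applied to the previously-established equivalence.
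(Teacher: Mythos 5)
Your proof is correct and follows the route the paper intends: Corollary \ref{inverse} characterises the one-sided property by the left--right symmetric condition that $E(S)$ be weakly noetherian, and since the opposite of an inverse semigroup is inverse with the same semilattice of idempotents, the left-handed version follows by duality. This is exactly why the paper states Corollary \ref{inverse left-right} without further argument.
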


\begin{remark}
Corollary \ref{inverse left-right} does not hold for regular semigoups in general.  
Indeed, any infinite right zero semigroup is weakly right noetherian but not weakly left noetherian.
\end{remark}

Let $X$ be an infinite set, and let $X^{-1}=\{x^{-1} : x\in X\}$ be a set disjoint from $X.$
The {\em polycyclic monoid} over $X,$ denoted by $P_X,$ is the monoid with zero defined by the presentation 
$$\langle X, X^{-1}\,|\,xx^{-1}=1, xy^{-1}=0\,(x, y\in X, x\neq y)\rangle.$$
This presentation yields the normal form $\{u^{-1}v : u, v\in X^*\}\cup\{0\}$ for $P_X$ \cite[Section 1]{Meakin}.
The monoid $P_X$ is an inverse monoid with a single non-zero $\mathcal{D}$-class (see \cite[Section 1.3]{Perrot} or \cite[p.\ 478]{Campbell}).  In the case that $|X|=1,$ $P_X$ is the bicyclic monoid with zero adjoined.  It turns out that this is the only case where $P_X$ is weakly noetherian.

\begin{prop}
The polycyclic monoid $P_X$ is weakly noetherian if and only if $|X|=1.$
\end{prop}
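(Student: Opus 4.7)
The plan is to reduce the question to one about the semilattice $E(P_X)$ by invoking Corollary \ref{inverse} (together with Corollary \ref{inverse left-right}): since $P_X$ is an inverse monoid, $P_X$ is weakly (right) noetherian if and only if $E(P_X)$ is weakly noetherian, which by the characterisation of weakly noetherian semilattices preceding Lemma \ref{semilattice} is equivalent to $E(P_X)$ containing no infinite strictly ascending chain and no infinite antichain.

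The case $|X|=1$ is immediate from the observation recorded just before the proposition: $P_X$ is then the bicyclic monoid with a zero adjoined, so by Lemma \ref{bicyclic} and Corollary \ref{large} it is weakly noetherian. For $|X|\geq 2$, I will produce an infinite antichain in $E(P_X)$. Using the normal form $\{u^{-1}v : u, v\in X^*\}\cup\{0\}$ and the defining relations $xx^{-1}=1$ and $xy^{-1}=0$ for $x\neq y$, one verifies that the non-zero idempotents of $P_X$ are precisely the pairwise distinct elements $e_u:=u^{-1}u$ for $u\in X^*$. A direct rewriting of $uv^{-1}$ (which equals $w^{-1}$ if $v=wu$, equals $w$ if $u=wv$, and equals $0$ otherwise) yields
$$e_u e_v=u^{-1}(uv^{-1})v=\begin{cases} e_v & \text{if } u \text{ is a suffix of } v,\\ e_u & \text{if } v \text{ is a suffix of } u,\\ 0 & \text{otherwise;}\end{cases}$$
hence in the semilattice order $e_u\geq e_v$ if and only if $u$ is a suffix of $v$.

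To finish, fix distinct $x, y\in X$ and consider $\{e_{xy^i} : i\geq 0\}$. For $i<j$, the last $i+1$ letters of $xy^j$ are $y^{i+1}$ (since $j\geq i+1$), which differs from $xy^i$ in the first letter; and trivially the longer word $xy^j$ is not a suffix of the shorter word $xy^i$. Thus no two of the $xy^i$ are suffix-comparable, so $\{e_{xy^i} : i\geq 0\}$ is an infinite antichain in $E(P_X)$, showing $E(P_X)$ is not weakly noetherian. The main obstacle is verifying the multiplication rule for the $e_u$, which rests on a careful application of the relations $xx^{-1}=1$ and $xy^{-1}=0$ to rewrite $uv^{-1}$; once this is established, the antichain construction and both implications follow easily.
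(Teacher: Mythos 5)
Your proposal is correct and follows essentially the same route as the paper: both directions reduce to $E(P_X)$ via Corollary \ref{inverse} (with Lemma \ref{bicyclic} and Corollary \ref{large} handling $|X|=1$), and your antichain $\{e_{xy^i}\}=\{y^{-i}x^{-1}xy^i\}$ is exactly the one the paper uses. The only cosmetic difference is that you derive the full suffix-order description of $E(P_X)$ before checking incomparability, whereas the paper simply computes that the pairwise products are $0$.
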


\begin{proof}
If $|X|=1,$ then $P_X$ is weakly noetherian by Lemma \ref{bicyclic} and Corollaries \ref{large} and \ref{inverse left-right}.\par 
Suppose $|X|\geq 2.$  Choose distinct elements $x, y\in X.$  We claim that the infinite set $\{y^{-i}x^{-1}xy^i : i\in\mathbb{N}\}\subseteq E(P_X)$ is an antichain.
Indeed, since $xy^{-1}=yx^{-1}=0,$ we deduce that for any $i\neq j,$ 
$$(y^{-i}x^{-1}xy^i )(y^{-j}x^{-1}xy^j)=y^{-i}x^{-1}(xy^{i-j}x^{-1})xy^j=0.$$
It now follows from Proposition \ref{wn semilattice} that $E(P_X)$ is not weakly noetherian, and hence $P_X$ is not weakly noetherian by Corollary \ref{inverse}.
\end{proof}

We now show that the principal factors of a regular semigroup inherit the property of being weakly right noetherian.

\begin{lemma}
\label{reg, principal factor}
Let $S$ be a regular semigroup.  If $S$ is weakly right noetherian, then so are all its principal factors.
\end{lemma}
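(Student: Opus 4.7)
My plan is to verify the two conditions of Theorem \ref{regular} for each principal factor. Let $P$ denote a principal factor of $S$ associated to a $\mathcal{J}$-class $J$ of $S$; thus either $P = K(S)$ (when $J$ is the kernel) or $P = J \cup \{0\}$ equipped with the Rees quotient multiplication. First I would show that $P$ is itself regular, and then verify that condition (2) of Theorem \ref{regular} holds in $P$ by transporting the corresponding condition from $S$.

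For regularity of $P$: given $a \in J$, let $b \in S$ be an inverse of $a$, so $a = aba$ and $b = bab$. From these equations we obtain $a \leq_{\mathcal{J}} b$ and $b \leq_{\mathcal{J}} a$, hence $a\,\mathcal{J}\,b$, so $b \in J \subseteq P$. To ensure that these inverse relations survive the Rees collapse, I would further argue that $ab, ba \in J$. Indeed, $a = (ab)a$ gives $a \leq_{\mathcal{R}} ab$, while $ab \in aS$ gives $ab \leq_{\mathcal{R}} a$, so $a\,\mathcal{R}\,ab$; similarly $ab\,\mathcal{L}\,b$ via $b = bab \in Sab$. Hence $ab \in J$, and analogously $ba \in J$. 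Consequently no intermediate product collapses to $0$ when the relations $a=aba$ and $b=bab$ are evaluated in $P$, and so $b$ remains an inverse of $a$ in $P$.

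Next, I would verify condition (2) of Theorem \ref{regular} for $P$. Let $U \subseteq E(P)$, and set $U^{\prime} = U \setminus \{0\} \subseteq E(S) \cap J$. Applying Theorem \ref{regular} to $S$ with the subset $U^{\prime}$ of $E(S)$, I obtain a finite $X^{\prime} \subseteq U^{\prime}$ such that for each $u \in U^{\prime}$ there exists $x \in X^{\prime}$ with $u = xu$ in $S$. Since $x, u \in J$ and $xu = u \in J$, the product $x \cdot u$ computed in $P$ coincides with $xu = u$, so the equation also holds in $P$. If $0 \in U$, take $X = X^{\prime} \cup \{0\}$ and observe that $0 = 0 \cdot 0$; otherwise take $X = X^{\prime}$. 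In either case, $X$ is a finite subset of $U$ satisfying Theorem \ref{regular}(2) for $P$, so $P$ is weakly right noetherian.

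The main obstacle is the regularity step, whose subtlety is that while $a$ always admits an inverse in $S$, the inverse $b$ and the auxiliary products $ab, ba$ must all lie in the single $\mathcal{J}$-class $J$ in order to avoid being identified with $0$ in the Rees quotient. Once this non-collapse is established, the remainder is a direct pullback of condition (2) of Theorem \ref{regular} from $S$ to $P$ via the defining multiplication of the principal factor.
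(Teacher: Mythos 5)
Your proof is correct, but it takes a genuinely different route from the paper's. The paper passes through the subsemigroup $T=S^1xS^1$: it observes that mutually inverse elements are $\mathcal{J}$-related, so $T$ (a union of $\mathcal{J}$-classes) is a regular subsemigroup of $S$ and hence weakly right noetherian by Corollary \ref{regular subsemigroup}; the principal factor is then a Rees quotient of $T$ and so is weakly right noetherian by Lemma \ref{quotient}. You instead work directly with the principal factor $P$: you verify that $P$ is itself regular --- your check that $b$, $ab$ and $ba$ all lie in $J$, so that no intermediate product collapses to $0$, is exactly the point that needs care and is correct --- and then transport condition (2) of Theorem \ref{regular} from $S$ to $P$, using the implication $(1)\Rightarrow(2)$ on $S$ and $(2)\Rightarrow(1)$ on $P$. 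Both arguments hinge on the same key fact that inverses lie in a common $\mathcal{J}$-class. The paper's version is shorter because it delegates the work to the general closure results for regular subsemigroups and for quotients; yours is more self-contained, avoids Corollary \ref{regular subsemigroup} (and hence the $\mathcal{R}$-preservation machinery behind it), and establishes along the way the standard fact that the principal factors of a regular semigroup are themselves regular.
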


\begin{proof}
Let $J$ be a $\mathcal{J}$-class of $S,$ fix $x\in J,$ and let $T=S^1xS^1.$  Now, $T$ is a union of $\mathcal{J}$-classes of $S,$ of which $J$ is the unique maximal one.  Since any pair of elements of $S$ that are inverses of each other must belong to the same $\mathcal{J}$-class, it follows that $T$ is regular, and hence $T$ is weakly right noetherian by Corollary \ref{regular subsemigroup}.
Then the principal factor of $J$ is weakly right noetherian by Lemma \ref{quotient}, since it is a Rees quotient of $T.$
\end{proof}

Lemma \ref{reg, principal factor} and Corollary \ref{principal series} together yield:

\begin{corollary}
\label{reg, principal series}
Let $S$ be a regular semigroup with a principal series.
Then $S$ is weakly right noetherian if and only if all its principal factors are weakly right noetherian.
\end{corollary}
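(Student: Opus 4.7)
The plan is to extract both implications from results already at hand, so this is essentially a bookkeeping argument rather than one requiring a new idea.

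For the forward direction, I would invoke Lemma~\ref{reg, principal factor} directly: if the regular semigroup $S$ is weakly right noetherian, then every principal factor of $S$ is weakly right noetherian, and this conclusion holds regardless of whether $S$ possesses a principal series.

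For the converse, I would invoke Corollary~\ref{principal series}: since $S$ has a principal series $K(S)=I_1\subset I_2\subset\dots\subset I_n=S$ whose factors $K(S)$ and $I_{k+1}/I_k$ are exactly the principal factors of $S$, the assumption that each principal factor is weakly right noetherian allows successive application of Corollary~\ref{ideal extension} (as is done in the proof of Corollary~\ref{principal series}) to conclude that $S$ itself is weakly right noetherian. Note that regularity plays no role in this direction; it is only needed for the forward implication, which relies on the fact that $S^1xS^1$ is a regular subsemigroup of $S$ and so inherits the property via Corollary~\ref{regular subsemigroup}.

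There is no real obstacle here: the substantive work has already been done in Lemma~\ref{reg, principal factor} (where regularity is used to ensure that $S^1xS^1$ is regular, hence that Corollary~\ref{regular subsemigroup} applies) and in Corollary~\ref{principal series} (where the iterated application of Corollary~\ref{ideal extension} along the principal series is carried out). The corollary is simply the conjunction of these two facts.
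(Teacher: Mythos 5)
Your proposal is correct and matches the paper exactly: the forward implication is Lemma \ref{reg, principal factor} and the converse is Corollary \ref{principal series}, which is precisely how the paper derives this corollary (it states it as an immediate consequence of those two results). Your additional remarks about where regularity is and is not needed are accurate.
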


A semigroup $S$ is said to be {\em completely semisimple} if all its principal factors are completely $0$-simple or completely simple.

\begin{corollary}
\label{completely semisimple}
Let $S$ be a completely semisimple semigroup with a principal series.
Then $S$ is weakly right noetherian if and only if it has finitely many $\mathcal{R}$-classes.
\end{corollary}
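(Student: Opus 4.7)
The reverse implication is immediate: a semigroup with finitely many $\mathcal{R}$-classes is weakly right noetherian by Corollary \ref{R-classes}, with no further hypotheses needed.

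For the forward implication, the first step is to observe that $S$ is regular. Indeed, every element of $S$ lies in some $\mathcal{J}$-class $J$, whose principal factor is by assumption either completely simple or completely $0$-simple, hence regular. An inverse of $a\in J$ produced in the principal factor must lie in $J$ (otherwise the equation $a=aba$ would force $a\in T\setminus J$ in the parent subsemigroup $T=S^1xS^1$), so $a$ has an inverse in $S$ itself.

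The plan then is to apply the structural results already in hand. Since $S$ is regular and has a principal series, Corollary \ref{reg, principal series} tells us that $S$ is weakly right noetherian if and only if every principal factor of $S$ is weakly right noetherian. Each principal factor is completely simple or completely $0$-simple by hypothesis, so Corollary \ref{completely simple} upgrades this to: $S$ is weakly right noetherian if and only if each principal factor has only finitely many $\mathcal{R}$-classes. Finally, having a principal series forces $S$ to have only finitely many $\mathcal{J}$-classes. Combining: $S$ is weakly right noetherian iff each of its finitely many $\mathcal{J}$-classes contains only finitely many $\mathcal{R}$-classes, i.e.\ iff $S$ has finitely many $\mathcal{R}$-classes altogether.

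The only slightly delicate step is the last identification, namely that the $\mathcal{R}$-classes of $S$ lying in a given $\mathcal{J}$-class $J$ are in bijection with the non-zero $\mathcal{R}$-classes of the principal factor of $J$. This is the main thing that needs a brief justification. It follows from the standard fact that for elements $a,b\in J$, one has $a\,\mathcal{R}_S\,b$ iff $a\,\mathcal{R}_{T}\,b$ (where $T=S^1xS^1$), and the latter coincides with $\mathcal{R}$ in the Rees quotient $T/(T\setminus J)$ on non-zero elements, because all the relevant witnesses of the form $aS^1=bS^1$ can be taken inside $T$ by regularity. Once this identification is in place, the counting argument above completes the proof.
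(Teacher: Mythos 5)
Your proposal is correct and follows essentially the same route as the paper: the reverse direction via Corollary \ref{R-classes}, and the forward direction by reducing to the principal factors via Corollary \ref{reg, principal series}, applying Corollary \ref{completely simple} to each, and then counting over the finitely many $\mathcal{J}$-classes. Your explicit justifications that $S$ is regular and that the $\mathcal{R}$-classes within a $\mathcal{J}$-class correspond to the non-zero $\mathcal{R}$-classes of its principal factor are details the paper leaves implicit, but they do not change the argument.
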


\begin{proof}
If $S$ is weakly right noetherian, then every principal factor of $S$ has finitely many $\mathcal{R}$-classes by Lemma \ref{reg, principal series} and Corollary \ref{completely simple}.  It follows that every $\mathcal{J}$-class is a finite union of $\mathcal{R}$-classes.  Since $S$ has finitely many $\mathcal{J}$-classes, we conclude that it has finitely many $\mathcal{R}$-classes.\par 
The converse follows from Corollary \ref{R-classes}.
\end{proof}

\begin{remark}
Corollary \ref{completely semisimple} does not hold is we remove the condition that $S$ has a principal series.
Indeed, there exist infinite weakly noetherian semilattices (in which $\mathcal{J}=\mathcal{R}$ is the identity relation).\par
Also, Corollary \ref{completely semisimple} does not hold for general regular semigroups with a principal series.
For example, the bicyclic monoid has a single $J$-class and is weakly right noetherian, but it has infinitely many $\mathcal{R}$-classes.
\end{remark}

A semigroup is said to be {\em completely regular} if it is a union of groups.  Completely regular semigroups have the following characterisation.

\begin{thm}\cite[Theorem 4.1.3]{Howie}
\label{cr characterisation}
Every completely regular semigroup is a semilattice of completely simple semigroups.
\end{thm}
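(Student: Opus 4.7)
The plan is to realize $S$ as the semilattice $Y=S/\mathcal{J}$ of its $\mathcal{J}$-classes, with each class a completely simple subsemigroup.  This splits into three ingredients: (a) each $\mathcal{J}$-class of $S$ is a subsemigroup; (b) $\mathcal{J}$ is a congruence on $S$ and $S/\mathcal{J}$ is a semilattice; (c) each $\mathcal{J}$-class is completely simple.

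The key preliminary observation, driving everything, is that in a completely regular semigroup every $a\in S$ lies in a maximal subgroup $H_e$ together with all its powers, so $a\,\mathcal{H}\,a^n$ for all $n\geq 1,$ and the group inverse $a'\in H_e$ satisfies $aa'=a'a=e.$  From this one readily deduces $\mathcal{D}=\mathcal{J}$ on $S$ by a standard Green's-relations manipulation, and also $ab\,\mathcal{J}\,ba$ for all $a,b\in S$, because $ab\,\mathcal{H}\,(ab)^2=a(ba)b$ gives $ab\leq_{\mathcal{J}}ba$, and symmetry closes the loop.

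The main technical work is (a) and the congruence half of (b).  For closure, given $a,b$ in a $\mathcal{J}$-class $J$, I would use the idempotents $e\,\mathcal{H}\,a$, $f\,\mathcal{H}\,b$ and $\mathcal{J}$-witnesses for $e\,\mathcal{J}\,f$, combined with the group inverses at each idempotent and the identities $ea=a=ae$, $fb=b=bf$, to construct $p,q\in S^1$ with $a=p(ab)q$, thereby establishing $ab\,\mathcal{J}\,a$.  The congruence property is proved in a parallel fashion: $a\,\mathcal{J}\,b$ and $c\in S$ imply $ac\,\mathcal{J}\,bc$ by transporting $\mathcal{J}$-witnesses through right multiplication using the same idempotent-and-group-inverse toolkit.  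I expect this step, which leans on the combinatorics of the egg-box picture of $\mathcal{J}$-classes in the completely regular setting, to be the main obstacle.

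Once (a) and the congruence property are in place, the multiplication $J_aJ_b=J_{ab}$ on $S/\mathcal{J}$ is well defined; idempotency from $J_a^2=J_{a^2}=J_a$ via $a\,\mathcal{J}\,a^2$, and commutativity from $ab\,\mathcal{J}\,ba$, make $S/\mathcal{J}$ a semilattice, completing (b).  For (c), each $\mathcal{J}$-class $J$ is a subsemigroup inheriting complete regularity from $S$; simplicity of $J$ follows by observing that if $a,b\in J$ and $a=xby$ in $S^1$, then $a=(e_ax)\,b\,(ye_a)$ with both outer factors lying in $J$ (each is forced to be $\mathcal{J}$-equivalent to $a$ by the sandwich identity), so the principal ideal of $J$ generated by $b$ is all of $J.$  A completely regular simple semigroup is completely simple, either by invoking the Rees--Suschkewitsch theorem or by directly verifying primitivity of idempotents via the group inverses available in each $\mathcal{H}$-class, finishing the proof.
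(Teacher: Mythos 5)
The paper offers no proof of this statement---it is quoted directly from Howie---so your outline must be judged on its own. Its architecture is the standard one and is mostly sound: the facts $a\,\mathcal{H}\,a^2$ (hence $a\,\mathcal{J}\,a^2$) and $ab\,\mathcal{J}\,ba$ are correctly established, the reduction of part (c) to ``simple and completely regular implies completely simple'' is legitimate, and your argument that $J=JbJ$ for each $b\in J$ does yield simplicity once $J$ is known to be closed under multiplication.

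The genuine gap is the step you yourself flag as ``the main obstacle'': that $\mathcal{J}$ is a congruence. You describe it as transporting $\mathcal{J}$-witnesses through right multiplication with the idempotent-and-group-inverse toolkit, but no such transport is exhibited, and the naive attempt fails: from $a=xby$ one only gets $ac=xbyc\leq_{\mathcal{J}}b$, not $\leq_{\mathcal{J}}bc$. The missing move is precisely the squaring trick you already used to prove $ab\,\mathcal{J}\,ba$: writing $w=byc$, one has $ac=xw\leq_{\mathcal{J}}w\,\mathcal{J}\,w^2=by(cb)yc\leq_{\mathcal{J}}cb\,\mathcal{J}\,bc$, and symmetry in $a,b$ gives $ac\,\mathcal{J}\,bc$ (dually for left multiplication). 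Note also that once the congruence property is in hand, your step (a) is free, since $a\,\mathcal{J}\,b$ gives $ab\,\mathcal{J}\,a^2\,\mathcal{J}\,a$; so proving closure first by a separate construction of $p,q$ with $a=p(ab)q$ puts the hard work in the wrong order---the construction you sketch produces $a\leq_{\mathcal{J}}asb$ for some spurious middle factor $s$, and deleting that factor requires the same squaring trick anyway.
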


From Lemmas \ref{semilattice} and \ref{sofs, lri}, and Corollary \ref{completely simple}, we deduce:

\begin{prop}
\label{cr}
Let $S$ be a completely regular semigroup, and let $S=\mathcal{S}(Y, S_{\alpha})$ be its decomposition into a semilattice of completely simple semigroups.
If $S$ is weakly right noetherian, then $Y$ is weakly noetherian and each $S_{\alpha}$ has finitely many $\mathcal{R}$-classes.
\end{prop}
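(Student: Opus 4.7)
The plan is to assemble the three cited results in a short, direct way. Suppose $S=\mathcal{S}(Y, S_{\alpha})$ is weakly right noetherian. First, since $Y$ is a homomorphic image of $S$ (the semilattice of components is always a quotient), Lemma \ref{semilattice} immediately gives that $Y$ is weakly noetherian. That handles the first conclusion without any further work.

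For the second conclusion, I would fix an arbitrary $\alpha \in Y$ and argue that $S_{\alpha}$ has local right identities so that Lemma \ref{sofs, lri} applies. Since $S_{\alpha}$ is completely simple, it is (by the Rees theorem or, equivalently, as a particular case of being completely regular) a union of groups: every element $a \in S_{\alpha}$ lies in a maximal subgroup $H_{a}$ of $S_{\alpha}$ with some identity $e \in E(S_{\alpha})$, whence $a = ae \in aS_{\alpha}$. Hence $S_{\alpha}$ has local right identities. Applying Lemma \ref{sofs, lri} with $\beta = \alpha$ then yields that $S_{\alpha}$ is weakly right noetherian.

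Finally, since $S_{\alpha}$ is completely simple and weakly right noetherian, Corollary \ref{completely simple} tells us directly that $S_{\alpha}$ has finitely many $\mathcal{R}$-classes. As $\alpha \in Y$ was arbitrary, this completes the proof.

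There is no substantial obstacle here; the only point requiring a moment of thought is the verification that a completely simple semigroup has local right identities, but this is immediate from the fact that it is a union of groups (each element being idempotent-absorbing on the right within its $\mathcal{H}$-class). All the real work has already been done in Lemmas \ref{semilattice} and \ref{sofs, lri} and Corollary \ref{completely simple}.
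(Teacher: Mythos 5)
Your proposal is correct and follows essentially the same route as the paper, which deduces the proposition directly from Lemma \ref{semilattice}, Lemma \ref{sofs, lri} and Corollary \ref{completely simple}; your verification that a completely simple semigroup has local right identities is the same observation the paper makes earlier (regular semigroups have local right identities, since $a=aba\in aS$).
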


We shall see that the converse of Proposition \ref{cr} does not hold.\par  
In the remainder of this section we focus our attention on \textit{strong} semilattices of completely simple semigroups.  For more information about the structure of such semigroups, see \cite[Section IV.4]{Petrich}.\par
A {\em Clifford semigroup} is an inverse completely regular semigroup.  It follows from Theorem \ref{cr characterisation} that Clifford semigroups are precisely the semilattices of groups.  In fact, Clifford semigroups are {\em strong} semilattices of groups \cite[Theorem III.2.12]{Grillet}.  If $S=\mathcal{S}(Y, G_{\alpha})$ is a semilattice of groups, it is clear that $Y$ is isomorphic to $E(S),$ so Corollary \ref{inverse} yields:

\begin{corollary}
Let $S$ be a Clifford semigroup with decomposition $S=\mathcal{S}(Y, G_{\alpha})$ into a semilattice of groups.  Then $S$ is weakly right noetherian if and only if $Y$ is weakly noetherian.
\end{corollary}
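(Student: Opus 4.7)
The plan is to reduce the statement directly to Corollary \ref{inverse} by identifying the semilattice of idempotents $E(S)$ with the structure semilattice $Y.$ Since a Clifford semigroup is by definition inverse, Corollary \ref{inverse} tells us that $S$ is weakly right noetherian if and only if $E(S)$ is weakly noetherian. So the whole proof boils down to establishing a semilattice isomorphism $Y\cong E(S),$ after which the result is immediate.

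First I would note that each component $G_{\alpha}$ of the decomposition is a group and therefore contains a unique idempotent, namely its identity element $e_{\alpha}.$ Since the $G_{\alpha}$ partition $S,$ this gives $E(S)=\{e_{\alpha}:\alpha\in Y\}$ and a bijection $\alpha\mapsto e_{\alpha}$ between $Y$ and $E(S).$ To verify that this bijection respects the semilattice operation, I would observe that for any $\alpha,\beta\in Y,$ the product $e_{\alpha}e_{\beta}$ lies in $G_{\alpha\beta}$ by the definition of a semilattice of semigroups, and is an idempotent (it lies in $E(S),$ which is a subsemilattice of the inverse semigroup $S$). Since $G_{\alpha\beta}$ is a group with unique idempotent $e_{\alpha\beta},$ this forces $e_{\alpha}e_{\beta}=e_{\alpha\beta},$ so the map is a semilattice isomorphism.

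With $Y\cong E(S)$ established, the equivalence $S$ is weakly right noetherian $\iff$ $E(S)$ is weakly noetherian $\iff$ $Y$ is weakly noetherian follows at once from Corollary \ref{inverse}. There is no substantive obstacle here; the content of the corollary lies entirely in the previously proved Corollary \ref{inverse}, and the observation $Y\cong E(S)$ is already flagged as clear in the paragraph preceding the statement.
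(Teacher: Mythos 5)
Your proposal is correct and is essentially the paper's own argument: the paper likewise observes that $Y$ is isomorphic to $E(S)$ for a semilattice of groups and then invokes Corollary \ref{inverse}. Your write-up merely spells out the isomorphism $\alpha\mapsto e_{\alpha}$ in more detail than the paper, which treats it as clear.
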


In what follows we shall provide necessary and sufficient conditions for a general strong semilattice of completely simple semigroups to be weakly right noetherian.  We use the following folklore result, which we prove for completeness.

\begin{lemma}
\label{cr strong lemma}
Let $S=\mathcal{S}(Y, S_{\alpha}, \phi_{\alpha, \beta})$ be a strong semilattice of completely simple semigroups.  Then each of Green's relations is a congruence on $S.$  Furthermore, we have $S/\mathcal{J}=S/\mathcal{D}=Y$; $S/\mathcal{R}$ is a strong semilattice of left zero semigroups; $S/\mathcal{L}$ is a strong semilattice of right zero semigroups; and $S/\mathcal{H}$ is a strong semilattice of rectangular bands.
\end{lemma}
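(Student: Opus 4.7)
The plan is to first determine the $\mathcal{J}$-class (and $\mathcal{D}$-class) structure of $S$, then prove a restriction lemma showing Green's relations on $S$ agree with those on each $S_\alpha$, use this to deduce each relation is a congruence, and finally identify the quotients via the Rees matrix description of the components.

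To determine the $\mathcal{J}$-class structure, I would argue that for $a, b$ in the same $S_\alpha$, simplicity of $S_\alpha$ gives $a \mathcal{J}_{S_\alpha} b$, hence $a \mathcal{J}_S b$; conversely, if $a \in S_\alpha$, $b \in S_\beta$ and $a \in S^1 b S^1$, then tracing semilattice indices forces $\alpha \leq \beta$, and by symmetry $\alpha = \beta$. So the $\mathcal{J}_S$-classes are exactly the $S_\alpha$, and since each $S_\alpha$ is a single $\mathcal{D}$-class (being completely simple), $\mathcal{D}_S = \mathcal{J}_S$ and the quotient is $Y$ up to isomorphism. Next I would establish the key restriction lemma: for $a, b \in S_\alpha$, $a \mathcal{R}_S b$ iff $a \mathcal{R}_{S_\alpha} b$ (and analogously for $\mathcal{L}, \mathcal{H}$). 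Given $a = bs$ with $s \in S^1$, the case $s = 1$ is trivial, and otherwise $s \in S_\gamma$ with $a \in S_{\alpha\gamma} = S_\alpha$ forcing $\gamma \geq \alpha$; the strong semilattice formula then rewrites $a = b \cdot (s\phi_{\gamma, \alpha})$ with $s\phi_{\gamma, \alpha} \in S_\alpha$, as required.

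With the restriction lemma in hand, I would show $\mathcal{R}_S$ is a congruence on $S$: it is a left congruence automatically, and for right compatibility, if $a \mathcal{R}_S b$ (so $a, b \in S_\alpha$ with $a \mathcal{R}_{S_\alpha} b$) and $c \in S_\beta$, the general fact that semigroup homomorphisms preserve $\leq_{\mathcal{R}}$ gives $a\phi_{\alpha, \alpha\beta} \mathcal{R}_{S_{\alpha\beta}} b\phi_{\alpha, \alpha\beta}$; since $S_{\alpha\beta}$ is completely simple, $\mathcal{R}_{S_{\alpha\beta}}$ is a right congruence, yielding $ac \mathcal{R}_{S_{\alpha\beta}} bc$, which lifts back via the restriction lemma to $ac \mathcal{R}_S bc$. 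A symmetric argument handles $\mathcal{L}_S$, and then $\mathcal{H}_S = \mathcal{R}_S \cap \mathcal{L}_S$ is a congruence as the intersection of two congruences.

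To identify the quotients, I would note that each $S_\alpha \cong \mathcal{M}(G_\alpha; I_\alpha, J_\alpha; P_\alpha)$ has $\mathcal{R}$-classes indexed by rows, so $S_\alpha / \mathcal{R}$ is a left zero semigroup. Each $\phi_{\alpha, \beta}$ descends to a homomorphism $\bar{\phi}_{\alpha, \beta}: S_\alpha/\mathcal{R} \to S_\beta/\mathcal{R}$, inheriting the coherence identities $\bar{\phi}_{\alpha, \alpha} = \mathrm{id}$ and $\bar{\phi}_{\alpha, \beta}\bar{\phi}_{\beta, \gamma} = \bar{\phi}_{\alpha, \gamma}$ from $\phi$. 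The multiplication matches the strong semilattice product: $[a]_{\mathcal{R}}[b]_{\mathcal{R}} = [ab]_{\mathcal{R}} = [(a\phi_{\alpha, \alpha\beta})(b\phi_{\beta, \alpha\beta})]_{\mathcal{R}}$, which in the left zero semigroup $S_{\alpha\beta}/\mathcal{R}$ collapses to $[a\phi_{\alpha, \alpha\beta}]_{\mathcal{R}} = [a]_{\mathcal{R}}\bar{\phi}_{\alpha, \alpha\beta}$. Entirely analogous arguments handle $S/\mathcal{L}$ (right zero components from columns) and $S/\mathcal{H}$ (rectangular band components $I_\alpha \times J_\alpha$). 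The main obstacle will be the restriction lemma, since without it one cannot reduce the right-congruence property of $\mathcal{R}_S$ to the right-congruence property inside the completely simple components, and the identification of the quotients would lose its connection to the semilattice grading.
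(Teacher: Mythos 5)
Your proposal is correct and takes essentially the same approach as the paper: identify the $S_\alpha$ as the $\mathcal{J}=\mathcal{D}$-classes, use that $\mathcal{R}_S$ restricted to each component is $\mathcal{R}_{S_\alpha}$ and is a congruence on the completely simple $S_{\alpha\beta}$, push $a\,\mathcal{R}\,b$ through $\phi_{\alpha,\alpha\beta}$ to get right compatibility, and read off the quotients from the Rees matrix coordinates. The only cosmetic difference is that you prove the restriction property by a direct computation with the strong semilattice multiplication, whereas the paper cites the general fact that Green's relations on a regular subsemigroup are the restrictions of those on the ambient semigroup.
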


\begin{proof}
It is clear that $\mathcal{D}=\mathcal{J}$ and $S/\mathcal{J}=Y.$
We prove that $\mathcal{R}$ is a congruence.  A dual argument proves that $\mathcal{L}$ is a congruence, and hence $\mathcal{H}=\mathcal{R}\cap\mathcal{L}$ is congruence.\par
For each $\alpha\in Y,$ let $S_{\alpha}=\mathcal{M}(G_{\alpha}; I_{\alpha}, J_{\alpha}; P_{\alpha}).$
Recall that $\mathcal{R}$ is a left congruence on $S,$ so we just need to show that it is a right congruence.
For $\alpha\in Y,$ we write $\mathcal{R}_{\alpha}=\mathcal{R}_{S_{\alpha}}.$  Since $S_{\alpha}$ is regular, we have that $\mathcal{R}_{\alpha}=\mathcal{R}\cap(S_{\alpha}\times S_{\alpha})$ \cite[Proposition 2.4.2]{Howie}.
Note that $\mathcal{R}_{\alpha}$ is a congruence on $S_{\alpha}.$\par
Let $(a, b)\in\mathcal{R}$ and $c\in S.$  Since $Y$ is $\mathcal{R}$-trivial, we must have that $(a, b)\in\mathcal{R}_{\alpha}$ for some $\alpha\in Y.$
Now, the element $c$ belongs to some $S_{\beta}, \beta\in Y.$
Certainly $(a\phi_{\alpha, \alpha\beta}, b\phi_{\alpha, \alpha\beta})\in\mathcal{R}_{\alpha\beta}.$
Since $\mathcal{R}_{\alpha\beta}$ is a congruence, we have
$$(ac, bc)=((a\phi_{\alpha, \alpha\beta})(c\phi_{\beta, \alpha\beta}), (b\phi_{\alpha, \alpha\beta})(c\phi_{\beta, \alpha\beta}))\in\mathcal{R}_{\alpha\beta}\subseteq\mathcal{R},$$
as required.\par
It can be easily shown that $S/\mathcal{R}\cong\mathcal{S}(Y, I_{\alpha}, \psi_{\alpha, \beta}),$ where the $I_{\alpha}$ are considered as left zero semigroups and each $\psi_{\alpha, \beta} : I_{\alpha}\to I_{\beta}$ is defined as follows: $i_{\alpha}\psi_{\alpha, \beta}=k_{\beta}$ if for some (and hence all) $j_{\alpha}\in J_{\alpha},$ we have $$(i_{\alpha}, p_{j_{\alpha}i_{\alpha}}^{-1}, j_{\alpha})\phi_{\alpha, \beta}=(k_{\beta}, p_{l_{\beta}k_{\beta}}^{-1}, l_{\beta})$$ for some $l_{\beta}\in J_{\beta}.$  Dually, we have that $S/\mathcal{L}$ is a strong semilattice of right zero semigroups.  Finally, we have $S/\mathcal{H}\cong\mathcal{S}(Y, B_{\alpha}, \theta_{\alpha, \beta}),$ where each $B_{\alpha}=I_{\alpha}\times J_{\alpha}$ is a rectangular band and each $\theta_{\alpha, \beta} : B_{\alpha}\to B_{\beta}$ is defined as follows: $(i_{\alpha}, j_{\alpha})\theta_{\alpha, \beta}=(k_{\beta}, l_{\beta})$ if 
$$(i_{\alpha}, p_{j_{\alpha}i_{\alpha}}^{-1}, j_{\alpha})\phi_{\alpha, \beta}=(k_{\beta}, p_{l_{\beta}k_{\beta}}^{-1}, l_{\beta}).$$
This completes the proof.
\end{proof}

\begin{thm}
\label{cr strong}
Let $S$ be a strong semilattice of completely simple semigroups, and let $S/\mathcal{R}=\mathcal{S}(Y, S_{\alpha}, \phi_{\alpha, \beta})$ be the decomposition of $S/\mathcal{R}$ into a strong semilattice of left zero semigroups.  Then $S$ is weakly right noetherian if and only if the following conditions hold:
\begin{enumerate}
\item $Y$ is weakly noetherian;
\item each $S_{\alpha}$ is finite;
\item there exists a finite subsemilattice $Y_0$ of $Y$ with the following property: for each $\beta\in Y,$ there exists $\alpha\in Y_0$ such that $\alpha\geq\beta$ and $\phi_{\alpha, \beta}$ is surjective.
\end{enumerate}
\end{thm}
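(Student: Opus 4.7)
The plan is to reduce the question to $S/\mathcal{R}$ and then work directly with the strong semilattice of left zero semigroups structure given in the statement. By Lemma~\ref{cr strong lemma} the relation $\mathcal{R}$ is a congruence on $S$, so Lemma~\ref{congruence in R} (applied with $\rho = \mathcal{R}$) yields that $S$ is weakly right noetherian if and only if $S/\mathcal{R}$ is. I may therefore replace $S$ by $T := S/\mathcal{R} = \mathcal{S}(Y, S_\alpha, \phi_{\alpha, \beta})$ throughout.

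For the forward direction, (1) is immediate: $Y$ is a homomorphic image of $T$, hence weakly right noetherian by Lemma~\ref{quotient}, and a commutative weakly right noetherian semigroup is weakly noetherian. For (2), a direct computation using the left zero multiplication in each $S_\alpha$ shows that for distinct $a, b \in S_\alpha$ neither lies in the principal right ideal generated by the other in $T$, so the elements of $S_\alpha$ form an antichain of $\mathcal{R}_T$-classes, which must be finite by Theorem~\ref{principal}. For (3), write $T = XT^1$ for some finite $X$, let $Y_0'$ be the image of $X$ under the projection to $Y$, and let $Y_0$ be the (finite) subsemilattice of $Y$ generated by $Y_0'$. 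Unpacking $a = xs$ for an arbitrary $a \in S_\beta$ yields $S_\beta = \bigcup\{\phi_{\alpha, \beta}(X \cap S_\alpha) : \alpha \in Y_0',\ \alpha \geq \beta\}$. The key step is then to set $\alpha^* = \bigwedge\{\alpha \in Y_0 : \alpha \geq \beta\} \in Y_0$ (so $\alpha^* \geq \beta$) and observe, via the factorisation $\phi_{\alpha, \beta} = \phi_{\alpha, \alpha^*} \phi_{\alpha^*, \beta}$ valid for every $\alpha \geq \alpha^*$, that $\mathrm{Im}(\phi_{\alpha, \beta}) \subseteq \mathrm{Im}(\phi_{\alpha^*, \beta})$; this forces $\phi_{\alpha^*, \beta}$ to be surjective.

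For the converse, assume (1)--(3) and let $I$ be a right ideal of $T$. Define the set $M$ of \emph{minimal generators} of $I$ to consist of those $a \in I \cap S_\beta$ for which no $\gamma > \beta$ and $c \in I \cap S_\gamma$ satisfy $\phi_{\gamma, \beta}(c) = a$. First, $M$ generates $I$: for each $a \in I \cap S_\beta$, order the set of liftings $(\gamma, c)$ with $\gamma \geq \beta$, $c \in I \cap S_\gamma$, and $\phi_{\gamma, \beta}(c) = a$ by $(\gamma, c) \leq (\gamma', c')$ iff $\gamma' \geq \gamma$ and $\phi_{\gamma', \gamma}(c') = c$; chains project to ascending chains in $Y$, which terminate by (1), so a maximal lifting exists and necessarily lies in $M$. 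Second, $M$ is finite: otherwise pick distinct $m_i \in M \cap S_{\beta_i}$ and argue by successive refinement. By (1) and a Ramsey-style argument (no infinite antichain and no infinite strictly ascending chain in $Y$), refine to a subsequence with $\beta_1 \geq \beta_2 \geq \cdots$; if some $\beta^*$ recurs infinitely then (2) forces $M \cap S_{\beta^*}$ to be finite, a contradiction, so we may assume $\beta_1 > \beta_2 > \cdots$ strictly. By (3) and the finiteness of $Y_0$, refine further so that a common $\alpha^* \in Y_0$ dominates every $\beta_i$ with $\phi_{\alpha^*, \beta_i}$ surjective; by (2) applied at $\alpha^*$, refine once more so that a single $c^* \in S_{\alpha^*}$ satisfies $\phi_{\alpha^*, \beta_i}(c^*) = m_i$ for every $i$. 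For $i < j$, the factorisation $\phi_{\alpha^*, \beta_j} = \phi_{\alpha^*, \beta_i} \phi_{\beta_i, \beta_j}$ then gives $m_j = \phi_{\beta_i, \beta_j}(m_i)$, contradicting the minimality of $m_j$.

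The main obstacle is the finiteness of $M$: each of conditions (1), (2), and (3) is used essentially in the successive refinement, and the argument breaks down if any one of them is weakened.
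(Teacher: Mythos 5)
Your proof is correct. The reduction to $S/\mathcal{R}$ via Lemmas \ref{cr strong lemma} and \ref{congruence in R} and the whole forward direction are essentially the paper's argument: the paper derives (1) and (2) from Proposition \ref{cr} where you verify the antichain condition of Theorem \ref{principal} directly in the left zero components, and for (3) the paper takes $\alpha$ to be the product of the finitely many indices needed to generate a given $S_\beta$ rather than the meet of all of $\{\alpha\in Y_0:\alpha\geq\beta\}$ --- the same idea. The converse, however, is genuinely different. The paper argues constructively: for each element $a$ of the finite set $\bigcup_{\alpha\in Y_0}S_\alpha$ it forms the right ideal of $Y$ generated by $U_a=\{\beta\leq\alpha : a\phi_{\alpha,\beta}\in I\}$, uses condition (1) to extract finite sets $X_a\subseteq U_a$, and exhibits $\{a\phi_{\alpha,\beta}:\beta\in X_a\}$ as an explicit finite generating set of $I$, verified by a one-line computation $c=(a\phi_{\alpha,\beta})c$. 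You instead show that the set $M$ of elements of $I$ admitting no proper $\phi$-lift within $I$ generates $I$ (an ACC argument on liftings, using only the no-infinite-ascending-chain half of (1)) and then prove $M$ finite by a Ramsey-type refinement that invokes (1), (2) and (3) in turn. Both are valid; the paper's route is shorter and produces the generating set explicitly, while yours is non-constructive but isolates transparently where each of the three hypotheses is needed, and it uses the antichain half of (1) only in the finiteness step. All the small points your sketch leaves implicit do check out: the lifting relation is a partial order whose strict chains project to strict chains in $Y$; maximal liftings land in $M$; and $a=ca$ for $c$ a lift of $a$ because $S_\beta$ is left zero.
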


\begin{proof}
Green's relation $\mathcal{R}$ is a congruence on $S$ by Lemma \ref{cr strong lemma}.  Hence, by Lemma \ref{congruence in R}, $S$ is weakly right noetherian if and only if $S/\mathcal{R}$ is weakly right noetherian.  Therefore, it suffices to consider the case that $S=S/\mathcal{R}.$\par
($\Rightarrow$) (1) and (2) follow immediately from Proposition \ref{cr}.  For (3), we have that $S=XS$ for some finite subset $X\subseteq S.$  Let $Y_0$ be the semigroup generated by $\{\alpha\in Y : X\cap S_{\alpha}\neq\emptyset\}.$  Since $X$ is finite and $Y$ is {\em locally finite} (that is, every finitely generated subsemigroup is finite), we conclude that $Y_0$ is finite.\par
Now consider $\beta\in Y,$ and let $S_{\beta}=\{b_1, \dots, b_n\}.$  For each $i\in\{1, \dots, n\},$ there exists $x_i\in X$ such that $b_i\in x_iS,$ which implies that $b_i=x_ib_i.$  Let $x_i\in S_{\alpha_i}.$
Since $S_{\beta}$ is a left zero semigroup, we have $b_i=(x_i\phi_{\alpha_i, \beta})b_i=x_i\phi_{\alpha_i, \beta}.$
Now set $\alpha=\alpha_1\dots\alpha_n.$  Then $\alpha\in Y_0,$ and since $\alpha_i\geq\beta$ for all $i\in\{1, \dots, n\},$ we deduce that $\alpha\geq\beta.$
For each $i\in\{1, \dots, n\},$ we have $b_i=(x_i\phi_{\alpha_i, \alpha})\phi_{\alpha, \beta},$ so $\phi_{\alpha, \beta}$ is surjective.\par
($\Leftarrow$) Let $I$ be a right ideal of $S.$  For each $\alpha\in Y_0$ and $a\in S_{\alpha},$ define a set
$$U_a=\{\beta\in Y : \beta\leq\alpha, a\phi_{\alpha, \beta}\in I\},$$
and let $I_a$ be the right ideal $U_aY$ of $Y.$  Since $Y$ is weakly noetherian, there exists a finite set $X_a\subseteq U_a$ such that $I_a=X_aY.$
We claim that $I$ is generated by the finite set
 $$X=\{a\phi_{\alpha, \beta} : a\in S_{\alpha}, \alpha\in Y_0, \beta\in X_a\}.$$
Indeed, let $c\in I\cap S_{\gamma}.$  There exists $\alpha\in Y_0$ such that $\alpha\geq\gamma$ and $\phi_{\alpha, \gamma}$ is surjective.  In particular, there exists $a\in S_{\alpha}$ such that $a\phi_{\alpha, \gamma}=c.$  Then $\gamma\in I_a,$ so $\gamma=\beta\gamma$ for some $\beta\in X_a.$  It follows that
$$c=c^2=(a\phi_{\alpha, \gamma})c=\bigl((a\phi_{\alpha, \beta})\phi_{\beta, \gamma}\bigr)(c\phi_{\gamma, \gamma})=(a\phi_{\alpha, \beta})c\in XS,$$
completing the proof of this direction and of the theorem.
\end{proof}

\begin{corollary}
\label{bound}
Let $S=\mathcal{S}(Y, S_{\alpha}, \phi_{\alpha, \beta})$ be a strong semilattice of completely simple semigroups.  If $S$ is weakly right noetherian, then the set $\{|S_{\alpha}/\mathcal{R}| : \alpha\in Y\}$ is bounded.
\end{corollary}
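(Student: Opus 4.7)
My plan is to derive this corollary as a direct consequence of Theorem \ref{cr strong}, together with the observation that the connecting homomorphisms of the structure decomposition descend to the $\mathcal{R}$-quotients. The key point is that condition (3) of Theorem \ref{cr strong} supplies, uniformly, a finite set of ``large'' indices from which every component can be reached by a surjection, and condition (2) says each of these finitely many distinguished components is finite.

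More concretely, I would first note that by Lemma \ref{cr strong lemma}, $\mathcal{R}$ is a congruence on $S$, so $S/\mathcal{R}$ is itself a strong semilattice of left zero semigroups $\mathcal{S}(Y, \overline{S}_\alpha, \overline{\phi}_{\alpha,\beta})$, where $\overline{S}_\alpha = S_\alpha/\mathcal{R}$ and the maps $\overline{\phi}_{\alpha,\beta}$ are induced by the $\phi_{\alpha,\beta}$. By Lemma \ref{congruence in R}, $S/\mathcal{R}$ is weakly right noetherian. Applying Theorem \ref{cr strong} to $S/\mathcal{R}$, I obtain a finite subsemilattice $Y_0\subseteq Y$ such that for every $\beta\in Y$ there exists $\alpha\in Y_0$ with $\alpha\geq\beta$ and $\overline{\phi}_{\alpha,\beta}$ surjective; moreover each $\overline{S}_\alpha$ is finite.

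Whenever $\overline{\phi}_{\alpha,\beta}\colon\overline{S}_\alpha\to\overline{S}_\beta$ is surjective we immediately get $|\overline{S}_\beta|\leq|\overline{S}_\alpha|$. Hence setting
\[
M=\max\{|\overline{S}_\alpha| : \alpha\in Y_0\},
\]
which is finite because $Y_0$ is finite and each $\overline{S}_\alpha$ is finite, we deduce that $|S_\beta/\mathcal{R}|=|\overline{S}_\beta|\leq M$ for every $\beta\in Y$.

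There is no real obstacle here; the corollary is essentially a bookkeeping consequence of the three conditions isolated in Theorem \ref{cr strong}. The only small thing to be careful about is checking that the $\phi_{\alpha,\beta}$ really do induce well-defined homomorphisms on the $\mathcal{R}$-quotients and that surjectivity in $S/\mathcal{R}$ is what we need — but both follow from $\mathcal{R}$ being a congruence (Lemma \ref{cr strong lemma}), so this is a one-line verification rather than a genuine difficulty.
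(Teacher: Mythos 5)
Your proof is correct and follows essentially the same route as the paper: pass to $S/\mathcal{R}$ as a strong semilattice of finite left zero semigroups, invoke condition (3) of Theorem \ref{cr strong} to obtain the finite set $Y_0$ of indices with surjections onto every component, and take the maximum of the (finitely many, finite) cardinalities over $Y_0$. The only cosmetic difference is that you route the finiteness of each $S_\alpha/\mathcal{R}$ through condition (2) of Theorem \ref{cr strong} while the paper cites Proposition \ref{cr}; these amount to the same fact.
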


\begin{proof}
As in the proof of Lemma \ref{cr strong lemma}, we have that $S/\mathcal{R}$ is a a strong semilattice of semigroups $\mathcal{S}(Y, T_{\alpha}, \psi_{\alpha, \beta}),$ where $T_{\alpha}=S_{\alpha}/\mathcal{R}.$
By Lemma \ref{cr}, each $T_{\alpha}$ is finite.
Let $Y_0$ be as stated in Theorem \ref{cr strong}.  We claim that $\{|T_{\alpha}| : \alpha\in Y\}$ is bounded above by $$N=\max\{|T_{\alpha}|\in\mathbb{N} : \alpha\in Y_0\}.$$  Indeed, for each $\beta\in Y$ there exists $\alpha\in Y_0$ such that $\alpha\geq\beta$ and $\psi_{\alpha, \beta} : T_{\alpha}\to T_{\beta}$ is surjective; thus $|T_{\beta}|\leq|T_{\alpha}|\leq N.$ 
\end{proof}

Given Theorem \ref{cr strong}, we can show that the converse of Proposition \ref{cr} does not hold, even in the case that $S$ is a band.

\begin{ex}
\label{cr strong ex}
Let $S_i=\{x_i, y_i\}$ ($i\in\mathbb{N}$) be disjoint copies of the 2-element left zero semigroup.
Let $\phi_{i, i}$ be the identity map on $S_i,$ and for $i<j$ let $\phi_{i, j} : S_i\to S_j$ be the homomorphism given by $x_i\phi_{i, j}=y_i\phi_{i, j}=x_j.$
Then we have a strong semilattice of semigroups $S=\mathcal{S}(Y, S_i, \phi_{i, j}),$ where $Y$ is the infinite descending chain $(\mathbb{N}, \geq).$  Clearly $S$ does not satisfy condition (3) of Theorem \ref{cr strong}, so it is not weakly right noetherian.
\end{ex}

We end this section with an example demonstrating that Corollary \ref{bound} does not hold for completely regular semigroups in general.

\begin{ex}
Let $S_i=\{x_{i,1}, \dots, x_{i,i}\}$ for each $i\in\mathbb{N},$ and let $S=\bigcup_{i\in\mathbb{N}}S_i.$
For $i, j\in\mathbb{N},$ $k\in\{1, \dots, i\}$ and $l\in\{1, \dots, j\},$ define 
$$x_{i,k}x_{j,l}=\begin{cases}
x_{j,l}&\text{ if }i<j,\\
x_{i,k}&\text{ if }i\geq j.
\end{cases}$$
It can be shown that this multiplication is associative by an exhaustive case analysis.
It is easy to see that $S$ is a semilattice of semigroups $\mathcal{S}(Y, S_i),$
where $Y=(\mathbb{N}, \geq)$ and each $S_i$ is a left zero semigroup.\par
We now prove that $S$ is weakly right noetherian.  Let $I$ be a right ideal of $S,$ and let $i$ be minimal such that $I\cap S_i\neq\emptyset.$  We claim that $I=(I\cap S_i)S.$
Indeed, if $x_{j,l}\in I,$ then $j\geq i.$  If $i=j,$ then $x_{j,l}\in I\cap S_i.$
Otherwise, we have $x_{j,l}=x_{i,1}x_{j,l}\in(I\cap S_i)S,$ as required.
\end{ex}

\section{\large{Commutative Semigroups}\nopunct}
\label{sec:comm}

In this section we consider weakly noetherian commutative semigroups.
We begin by presenting the basic structure theory of commutative semigroups in terms of archimedean semigroups.\par
An {\em archimedean semigroup} is a commutative semigroup $S$ with the following property: for each $a, b\in S,$ there exist $n\in\mathbb{N}$ and $s\in S$ such that $a^n=bs.$  For instance, the free monogenic semigroup is archimedean.

\begin{thm}\cite[Theorem IV.2.2]{Grillet}
\label{archimedean decomposition}
Every commutative semigroup is a semilattice of archimedean semigroups.
\end{thm}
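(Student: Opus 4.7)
My plan is to construct an explicit semilattice congruence $\eta$ on $S$ whose quotient serves as the structure semilattice and whose classes are the archimedean components. I would define $\eta$ by declaring $a\,\eta\,b$ if there exist $m, n\in\mathbb{N}$ and $x, y\in S^1$ with $a^m=bx$ and $b^n=ay$; informally, each of $a$ and $b$ divides (via $S^1$-division) some power of the other. Reflexivity is immediate from $a^2=a\cdot a$, symmetry is built in, and transitivity follows by composing: if $a^m=bx$ and $b^p=cu$, then $a^{mp}=(bx)^p=b^px^p=c(ux^p)$ with $ux^p\in S^1$. The relation is a congruence by commutativity: from $a^m=bx$ and any $c\in S$ one gets $(ac)^m=a^mc^m=(bc)(xc^{m-1})$, with $xc^{m-1}\in S^1$.

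To see that $S/\eta$ is a semilattice, commutativity is inherited, and idempotence amounts to $a^2\,\eta\,a$: the equation $a^2=a\cdot a$ gives $a\mid a^2$ (so $a$ divides a power of $a^2$), while the trivial $a^2=a^2\cdot 1$ gives $a^2\mid a^2$ (so $a^2$ divides a power of $a$). Each class $N=[a]_\eta$ is therefore a subsemigroup of $S$.

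The crux of the proof is showing that $N$ is archimedean in its own right. Given $a, b\in N$, pick $m\in\mathbb{N}$ and $x\in S^1$ with $a^m=bx$ (which exists because $a\,\eta\,b$). I would set $s:=bx^2$, giving $a^{2m}=(bx)^2=b\cdot(bx^2)=bs$; if $x=1$ then $s=b\in N$ and $a^{2m}=b^2=b\cdot b$ works trivially. To place $s$ in $N$ in general, I verify $s\,\eta\,b$: from $s=b\cdot x^2$ I get $b\mid s$; and from $s\cdot b=b^2x^2=a^{2m}$ combined with the complementary relation $b^k=av$ (for some $k\in\mathbb{N}$, $v\in S^1$) I obtain $b^{2mk}=(av)^{2m}=a^{2m}v^{2m}=s\cdot(bv^{2m})$, so $s\mid b^{2mk}$. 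Hence $s\in N$, establishing the archimedean property.

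The main obstacle I anticipate is precisely this last step: the factor $x$ in the defining equation $a^m=bx$ need not itself belong to $N$, so the naive choice $s=x$ fails. The resolution is to square the equation to produce $a^{2m}=b\cdot(bx^2)$; the extra factor of $b$ makes $bx^2$ divide $a^{2m}$, and this, together with the given $\eta$-relation between $a$ and $b$, forces $bx^2$ to be $\eta$-related to $b$. Once the archimedean structure of each class is verified, the partition of $S$ by $\eta$-classes, indexed by the semilattice $S/\eta$, is the required decomposition.
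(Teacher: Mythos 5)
Your proof is correct. The paper gives no argument of its own here---it simply cites Grillet---and what you have written is essentially the classical Tamura--Kimura construction that the cited source presents: the mutual-power-divisibility relation $\eta$ is the least semilattice congruence, and its classes are the archimedean components. You also correctly identified and resolved the only delicate point, namely that the witness $s$ in $a^{n}=bs$ must lie in the class itself; squaring $a^{m}=bx$ to get $a^{2m}=b(bx^{2})$ and then checking $bx^{2}\,\eta\,b$ via the complementary relation $b^{k}=av$ is exactly the standard fix.
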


We now characterise archimedean semigroups with an idempotent.  We need the following definition.\par
A semigroup $S$ with zero $0$ is said to be {\em nilpotent} if for every $s\in S$ there exists $n\in\mathbb{N}$ such that $s^n=0.$ 

\begin{lemma}\cite[Proposition IV.2.3]{Grillet}
\label{Grillet, archimedean with idempotent}
A semigroup $S$ is archimedean with idempotent if and only if $S$ is either an abelian group or an ideal extension of an abelian group by a commutative nilpotent semigroup.
\end{lemma}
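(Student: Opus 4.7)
The plan is to isolate from the idempotent a natural abelian group ideal and then analyse the Rees quotient. The archimedean identity $a^n=bs$ will be applied in two different ways according to whether one puts the idempotent $e$ into the role of $a$ or of $b$.

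For the forward implication, let $e$ be an idempotent of the archimedean commutative semigroup $S$, and set $G:=eS$. Plainly $G$ is a subsemigroup on which $e$ acts as an identity. The crucial point for showing $G$ is a group is that, applying the archimedean property to the pair $(e,a)$ for any $a\in S$, there exist $s\in S$ and $n\in\mathbb{N}$ with $e=e^n=as$; commutativity then gives $(ea)(es)=e^2\cdot as=e\cdot e=e$, so $es\in G$ inverts $ea$. Hence $G$ is an abelian group, and it is an ideal of $S$ since $(ea)t=e(at)\in eS$ for all $t\in S$. Applying the archimedean property in the other order, now to the pair $(a,e)$, gives $m\in\mathbb{N}$ and $t\in S$ with $a^m=et\in G$, so every element of $S$ has some power in $G$. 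This says exactly that $S/G$ is nilpotent, and it is commutative by inheritance. If $S=G$ then $S$ is itself an abelian group; otherwise $S$ is a proper ideal extension of $G$ by the commutative nilpotent semigroup $S/G$.

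For the reverse implication, the abelian group case is immediate since $a=b\cdot(b^{-1}a)$ supplies the archimedean identity with $n=1$. For the ideal extension case, let $G$ be the group ideal, let $e$ be its identity (an idempotent of $S$), and take $a,b\in S$. Nilpotency of $S/G$ produces $N\in\mathbb{N}$ with $a^N,b^N\in G$. Since $G$ is an ideal, $a^{N+1}\in G$, and $b^N$ is invertible in the group $G$, so setting $c=(b^N)^{-1}a^{N+1}\in G$ and $s=b^{N-1}c\in S$ (with the convention that $b^{N-1}c=c$ when $N=1$) yields $bs=b^N c=a^{N+1}$, which is the required archimedean identity.

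I do not anticipate any serious obstacle. The one point requiring attention is choosing the correct pair in the archimedean identity at each step: the pair $(e,a)$ to obtain inverses inside $G$, and the pair $(a,e)$ to obtain powers landing in $G$. Once this is observed, the rest of the argument is a routine bookkeeping exercise using commutativity and the fact that $G$ is an ideal.
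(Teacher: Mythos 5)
Your argument is essentially correct, but there is nothing in the paper to compare it against: the paper states this lemma purely as a citation of Grillet's Proposition IV.2.3 and gives no proof, so your write-up is a self-contained derivation of an imported result. The forward direction is sound: $G=eS$ is a commutative monoid with identity $e$, the archimedean identity applied to the pair $(e,a)$ gives $e=as$ and hence $(ea)(es)=e$, so $G$ is an abelian group; it is an ideal; and applying the identity to $(a,e)$ puts a power of every element into $G$, which is exactly nilpotency of the Rees quotient $S/G$. The reverse direction's computation $bs=b^{N}c=ea^{N+1}=a^{N+1}$ is also correct. The one point you leave unaddressed is that ``archimedean'' is defined in this paper to include commutativity of $S$, whereas in the ideal-extension case you only verify the divisibility condition $a^{n}=bs$; you should note that commutativity of $S$ does follow from the hypotheses. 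Indeed, $xe=e(xe)=(ex)e=ex$ for all $x\in S$; if $xy,yx\in G$ then $xy=e(xy)=(ex)(ey)=(ey)(ex)=yx$ by commutativity of $G$; and if $xy\notin G$ then commutativity of $S/G$ forces $yx\notin G$ and $yx=xy$ since the Rees quotient is injective off $G$. With that sentence added, the proof is complete.
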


In general, archimedean semigroups can have a rather complex structure.
We refer the reader to \cite[Chapter IV]{Grillet} for more information.\par

In order for a commutative semigroup to be weakly noetherian, it is {\em not} necessary that all its archimedean components be weakly noetherian, as demonstrated by the following example.

\begin{ex}
\label{free comm ex}
Let $FC_2$ denote the free commutative semigroup on two generators $a$ and $b.$  We have that $FC_2$ is weakly noetherian by Theorem \ref{fg comm}.
It is easy to see that $FC_2$ has three archimedean components: $A=\langle a\rangle,$ $B=\langle b\rangle$ and $C=\{a^ib^j : i, j\geq 1\}.$  The infinite set $\{ab^i, a^ib : i\geq 1\}$ consists of all the indecomposable elements of $C,$ so $C$ is not weakly noetherian by Lemma \ref{indecomposable}.
\end{ex}

The next example shows that a commutative semigroup may not be weakly noetherian even if its structure semilattice and archimedean components are all weakly noetherian.

\begin{ex}
Let $S_i=\langle a_i\rangle$ ($i\in\mathbb{N}$) be disjoint copies of the free monogenic semigroup $\mathbb{N},$ which is weakly noetherian by Theorem \ref{fg comm}.
For $i<j,$ let $\phi_{i, j} : S_i\to S_j$ be the isomorphism given by $a_i\mapsto a_j.$
Let $S$ be the strong semilattice of archimedean semigroups $\mathcal{S}(Y, S_i, \phi_{i, j}),$ where $Y=(\mathbb{N}, \geq).$
Then $S$ contains an infinite set $\{a_i : i \in\mathbb{N}\}$ of indecomposable elements, and is hence not weakly noetherian by Lemma \ref{indecomposable}.
\end{ex}

We now state the main result of this section.

\begin{thm}
\label{comm, finitely many ACs}
Let $S$ be a commutative semigroup with finitely many archimedean components.  Then $S$ is weakly noetherian if and only if $S/\mathcal{H}$ is finitely generated.
\end{thm}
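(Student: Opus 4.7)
The backward implication is immediate: if $S/\mathcal{H}$ is finitely generated, R\'edei's theorem (Theorem~\ref{fg comm}) makes it noetherian and hence weakly noetherian; since commutativity gives $\mathcal{H}=\mathcal{R}$ as a congruence, Lemma~\ref{congruence in R} transfers weak noetherianness back to $S$.

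For the forward direction, let $T:=S/\mathcal{H}$, which is weakly noetherian by Lemma~\ref{congruence in R}. A direct computation using commutativity (if $[a]\,\mathcal{H}_T\,[b]$ then $[a]=[bt]$ for some $t\in S^1$, yielding $aS^1\subseteq bS^1$, and symmetrically $[a]=[b]$) shows $T$ is $\mathcal{H}$-trivial. The archimedean decomposition descends, so $T$ has finitely many archimedean components $T=\bigsqcup_{\alpha\in Y}T_\alpha$. The goal reduces to: any commutative $\mathcal{H}$-trivial weakly noetherian semigroup $T$ with finite $Y$ is finitely generated.

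The strategy is to show every archimedean component $T_\alpha$ is finitely generated; the union of their finite generating sets then generates $T$. In the single-component case, $T_\alpha$ is finitely generated by its (finite, by Lemma~\ref{indecomposable}) set $X_\alpha$ of indecomposable elements. Assuming $y\in T_\alpha\setminus\langle X_\alpha\rangle$, we have $y\in T_\alpha^2$, so commutativity lets us iteratively factor within $T_\alpha$, producing $z_k\in T_\alpha\setminus\langle X_\alpha\rangle$ with $z_k=z_{k+1}w_{k+1}$. ACC on principal right ideals (Proposition~\ref{acc}) and $\mathcal{H}$-triviality force $z_n=z_{n+1}$, hence $z_n=z_nw_{n+1}$. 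If $T_\alpha$ has no idempotent, the archimedean identity $w_{n+1}^m=z_ns$ together with $z_nw_{n+1}=z_n$ and commutativity yields
$$w_{n+1}^{m+1}=w_{n+1}\cdot z_ns=(w_{n+1}z_n)s=z_ns=w_{n+1}^m,$$
making $w_{n+1}^m$ idempotent---contradiction. If $T_\alpha$ has an idempotent, Lemma~\ref{Grillet, archimedean with idempotent} combined with $\mathcal{H}$-triviality makes $T_\alpha$ nil; iteration then gives $z_n=0_\alpha\in\langle X_\alpha\rangle$ (by nilpotency of any $x\in X_\alpha$, with a separate chain argument on principal right ideals handling the degenerate case $X_\alpha=\emptyset$ by showing $T_\alpha=\{0\}$), contradicting $z_n\notin\langle X_\alpha\rangle$.

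The main obstacle is showing that each $T_\alpha$ is itself weakly noetherian (so that ACC and Lemma~\ref{indecomposable} apply inside $T_\alpha$). For $\alpha$ maximal in $Y$, $T_\alpha$ is right unitary in $T$ (elements of other components never multiply into $T_\alpha$) and so is weakly noetherian by Corollary~\ref{right unitary}. For non-maximal $\alpha$, the component $T_\alpha$ is merely an ideal of the right-unitary (hence weakly noetherian) subsemigroup $T\!\downarrow\!\alpha:=\bigsqcup_{\beta\geq\alpha}T_\beta$, and ideals of weakly noetherian semigroups need not themselves be weakly noetherian (Example~\ref{free comm ex}). The resolution leverages the commutative $\mathcal{H}$-trivial structure: given a right ideal $J$ of $T_\alpha$, one extracts a finite generating set $X\subseteq J$ for the right ideal $J(T\!\downarrow\!\alpha)^1$ of $T\!\downarrow\!\alpha$ (via Lemma~\ref{finite gen set}), and then rewrites any decomposition $a=xt$ with $t\in T_\beta$ ($\beta>\alpha$) into one with $t'\in T_\alpha^1$ by using the archimedean structure and the finiteness of $Y$. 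This yields $J=XT_\alpha^1$ and hence $T_\alpha$ weakly noetherian, completing the reduction to the single-component argument.
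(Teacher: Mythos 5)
Your backward implication and your single-component argument are fine, but the reduction you build the forward direction on is false, and it is refuted by an example in this very paper. You claim that each archimedean component $T_\alpha$ of $T=S/\mathcal{H}$ is weakly noetherian and then finitely generated, so that the union of the components' generating sets generates $T$. Example \ref{free comm ex} is a direct counterexample: the free commutative semigroup $FC_2$ on $\{a,b\}$ is commutative, $\mathcal{H}$-trivial and weakly noetherian (Theorem \ref{fg comm}) with three archimedean components, yet its minimal component $C=\{a^ib^j : i,j\geq 1\}$ has infinitely many indecomposable elements $\{ab^i, a^ib : i\geq 1\}$ and so is neither finitely generated nor weakly noetherian (Lemma \ref{indecomposable}). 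Your proposed ``resolution'' --- rewriting a decomposition $a=xt$ with $t\in T_\beta$, $\beta>\alpha$, as $a=xt'$ with $t'\in T_\alpha^1$ --- cannot work, and fails concretely here: $ab^2=(ab)\cdot b$ with $b$ in a higher component, and there is no $t'\in C^1$ with $ab^2=(ab)t'$. So the statement you reduce to (``each $T_\alpha$ is finitely generated'') is simply not a theorem, even though the conclusion for $T$ as a whole is.

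The paper's proof threads this needle by proving only the weaker Lemma \ref{AC fg}: for the locally minimal component $S_\beta$ inside the upward-closed subsemigroup $T=\bigcup_{\alpha\geq\beta}S_\alpha$, there is a \emph{finite} $X\subseteq S_\beta$ with $S_\beta=\langle X\rangle(T\!\setminus\!S_\beta)^1$ --- i.e.\ $S_\beta$ is finitely generated only \emph{modulo the action of the higher components} (in $FC_2$, $C=\langle ab\rangle(A\cup B)^1$). One then inducts on $|Y|$: the minimum component is handled by Lemma \ref{AC fg}, and the finitely many upward-closed subsemigroups above the atoms of $Y$ are handled by the inductive hypothesis, their generating sets absorbing the $(T\!\setminus\!T_0)^1$ factor. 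Your base case ($|Y|=1$) is essentially sound --- the factorisation chain, stabilisation via ACC on principal ideals plus $\mathcal{H}$-triviality, and the idempotent dichotomy all check out --- but to repair the inductive step you must abandon the claim that components are finitely generated on their own and instead prove and use a relative statement of the above kind.
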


\begin{remark}
Note that if $S$ is a commutative semigroup such that $S/\mathcal{H}$ is finitely generated, then $S$ has finitely many archimedean components.  Indeed, let $S=\mathcal{S}(Y, S_{\alpha})$ be a decomposition of $S$ into a semilattice of archimedean semigroups.  It can be easily shown that if two elements of $S$ are $\mathcal{H}$-related, then they belong to the same archimedean component.  Thus $S/\mathcal{H}=\mathcal{S}(Y, T_{\alpha})$ for some semigroups $T_{\alpha}.$  Hence, $Y$ is a homomorphic image of $S/\mathcal{H}.$  Since $S/\mathcal{H}$ is finitely generated, we conclude that $Y$ is finite.\par
It follows that Theorem \ref{comm, finitely many ACs} does not hold if the condition that $S$ has finitely many archimedean components is dropped, since there certainly exist weakly noetherian commutative semigroups with infinitely many archimedean components; e.g.\ infinite weakly noetherian semilattices.
\end{remark}

In order to prove Theorem \ref{comm, finitely many ACs}, we first state and prove a few lemmas.

\begin{lemma}
\label{nilpotent}
Let $T$ be a commutative nilpotent semigroup.  If $T$ is finitely generated as a right ideal, then it is finite.
\end{lemma}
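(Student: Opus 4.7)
The plan is to exploit both hypotheses---that $T=XT^1$ for a finite set $X$ and that $T$ is nilpotent---to force every nonzero element of $T$ to be a product of generators of bounded length.

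First I would write $X=\{x_1,\dots,x_n\}$ and invoke nilpotency to choose an integer $M$ such that $x_i^M=0$ for every $i\in\{1,\dots,n\}$; this is possible because $X$ is finite and each $x_i$ is nilpotent (take $M$ to be the maximum of the exponents witnessing nilpotency of the individual generators).

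Next, starting from $T=XT^1=X\cup XT$, I would unfold inductively, using $XT=X(X\cup XT)=X^2\cup X^2T$ and so on, to obtain, for every $k\geq 1$,
$$T=X\cup X^2\cup\cdots\cup X^k\cup X^kT.$$
The key step is then to show that for $k=nM$ we have $X^k\subseteq\{0\}$: any product $x_{i_1}\cdots x_{i_{nM}}$ of $nM$ generators must, by the pigeonhole principle, contain some $x_j$ at least $M$ times, and since $T$ is commutative one may rearrange the product so that $x_j^M=0$ appears as a factor, whence the whole product collapses to $0$. This immediately gives $X^{nM}T\subseteq\{0\}$ as well, and the above decomposition at $k=nM$ then yields
$$T\subseteq\{0\}\cup\bigcup_{k=1}^{nM-1}X^k,$$
a finite union of finite sets (since $|X^k|\leq n^k$), so $T$ is finite.

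The only nontrivial step is the pigeonhole-plus-commutativity observation that $X^{nM}=\{0\}$; once this is in place the rest is elementary bookkeeping, combining the iterated ``peeling'' of the ideal equation $T=X\cup XT$ with the collapse of sufficiently long products.
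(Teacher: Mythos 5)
Your proof is correct and is essentially the same argument as the paper's: both iterate the equation $T=XT^1$ to express each element as a long product of generators (times a tail), and both use commutativity plus pigeonhole to show that sufficiently long products of generators vanish, so that $T=\langle X\rangle$ is finite. Your direct unfolding to depth $nM$ is a slightly cleaner packaging of the paper's proof by contradiction, but the underlying idea is identical.
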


\begin{proof}
Since $T$ is finitely generated as a right ideal, there exists a finite set $X\subseteq T$ such that $T=XT^1.$
Let $U=\langle X\rangle.$ 
For each $x\in X,$ let $$m(x)=\text{min}\{n\in\mathbb{N} : x^n=0\},$$ and let $N=\prod_{x\in X}m(x).$  It can be easily shown that $|U|\leq N.$
We claim that $T=U.$  Suppose for a contradiction that $T\neq U,$ and let $a\in T\!\setminus\!U.$
We have that $a=x_1t_1$ for some $x_1\in X$ and $t_1\in T^1.$  Since $a\notin U,$ we have that $t_1\in T\!\setminus\!U.$  
By a similar argument, there exist $x_2\in X$ and $t_2\in T\!\setminus\!U$ such that $t_1=x_2t_2.$
Continuing in this way, for each $n\in\mathbb{N}$ there exist $x_1, \dots, x_n\in X$ and $t_n\in T\!\setminus\!U$ such that $a=(x_1\dots x_n)t_n.$
However, we have that $x_1\dots x_N=0$ and hence $a=0,$ which is a contradiction. 
\end{proof}

\begin{lemma}
\label{archimedean with idempotent}
Let $S$ be an archimedean semigroup with idempotent. Then $S$ is weakly noetherian if and only if $S$ is either a group or an ideal extension of a group by a finite nilpotent semigroup.
\end{lemma}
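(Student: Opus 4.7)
The plan is to reduce the statement to the two earlier results Lemma \ref{Grillet, archimedean with idempotent} and Lemma \ref{nilpotent}, together with the behaviour of the weakly noetherian property under Rees quotients and ideal extensions.

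By Lemma \ref{Grillet, archimedean with idempotent}, an archimedean semigroup with an idempotent is either an abelian group or an ideal extension of an abelian group $G$ by a commutative nilpotent semigroup $N$ (in the sense that $G$ is an ideal of $S$ and the Rees quotient $S/G$ is isomorphic to $N$). In the group case there is nothing to show: groups are weakly noetherian by Corollary \ref{R-classes}. So I would handle only the genuine extension case, and the claim reduces to showing: $S$ is weakly noetherian if and only if $N = S/G$ is finite.

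For the forward direction, suppose $S$ is weakly noetherian. Then by Lemma \ref{quotient} the Rees quotient $S/G \cong N$ is weakly noetherian; in particular, $N$ is finitely generated as a right ideal of itself. Since $N$ is a commutative nilpotent semigroup, Lemma \ref{nilpotent} now forces $N$ to be finite. For the converse, assume $N$ is finite. Then $G$ is weakly noetherian (as a group) and $S/G \cong N$ is weakly noetherian (being finite), so Corollary \ref{ideal extension} yields that $S$ is weakly right noetherian. Since $S$ is commutative, this is equivalent to being weakly noetherian.

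I do not expect any real obstacle: the only subtlety is making sure the Rees quotient used in the ideal-extension description of Lemma \ref{Grillet, archimedean with idempotent} is the same Rees quotient to which Lemma \ref{quotient} and Corollary \ref{ideal extension} apply, and that the nilpotent semigroup produced there is genuinely commutative so that Lemma \ref{nilpotent} is applicable. Both are immediate from the hypotheses, so the whole argument is a short assembly of previously established facts.
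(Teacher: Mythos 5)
Your proof is correct and follows essentially the same route as the paper: Lemma \ref{Grillet, archimedean with idempotent} to get the group ideal $G$, then Lemma \ref{quotient} combined with Lemma \ref{nilpotent} for the forward direction. The only cosmetic difference is in the (easy) converse, where the paper notes that $S\!\setminus\!G$ is finite and applies Corollary \ref{large}, whereas you apply Corollary \ref{ideal extension}; both work equally well, and your appeal to commutativity to pass from weakly right noetherian to weakly noetherian is legitimate since archimedean semigroups are commutative by definition.
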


\begin{proof}
Every group is weakly noetherian, so assume that $S$ is not a group.
By Lemma \ref{Grillet, archimedean with idempotent}, there exists a group $G$ that is an ideal of $S$ such that $T=S/G$ is a nilpotent semigroup.
If $S$ is weakly noetherian, it follows from Lemmas \ref{quotient} and \ref{nilpotent} that $T$ is finite.
Conversely, if $T$ is finite, then $S$ is weakly noetherian by Corollary \ref{large}.
\end{proof}

\begin{lemma}
\label{archimedean without idempotent}
Let $S$ be an archimedean semigroup without idempotent. Then $S$ is weakly noetherian if and only if it is finitely generated.
\end{lemma}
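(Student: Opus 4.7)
The reverse implication is immediate from Theorem~\ref{fg comm}: every finitely generated commutative semigroup is noetherian, hence weakly noetherian. For the forward direction, my plan rests on the following \emph{Key Lemma}, which I would prove first: in a commutative archimedean semigroup $S$ without idempotent, the equation $a = as$ has no solution with $a, s \in S$. The proof of this key lemma is a direct computation: $a = as$ forces $a = as^n$ for every $n \geq 1$, and archimedeanness yields $m \in \mathbb{N}$ and $t \in S$ with $s^m = at$, so $a = as^m = a(at) = a^2 t$. Multiplying this by $t$ gives $at = a^2 t^2$, hence $e := at \in S$ satisfies $e^2 = (at)(at) = a^2 t^2 = at = e$, contradicting the absence of idempotents.

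Next, I would use the Key Lemma to show that every $\mathcal{R}$-maximal element of $S$ is indecomposable: if $a = bc$ with $b, c \in S$ and $a$ is $\mathcal{R}$-maximal, then $a \leq_{\mathcal{R}} b$ forces $b = au$ for some $u \in S^1$, whence $a = auc = a(uc)$ with $uc \in S$, contradicting the Key Lemma (in both cases $u = 1$ and $u \in S$, we have $uc \in S$). I would then let $X$ denote the set of indecomposable elements of $S$, which is finite by Lemma~\ref{indecomposable}. Appealing to the ascending chain condition on principal right ideals (Theorem~\ref{principal}), every $a \in S$ lies $\leq_{\mathcal{R}}$ some $\mathcal{R}$-maximal element, which must belong to $X$; thus $S = X S^1$.

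Finally, I would show $S = \langle X \rangle$. Suppose for contradiction that $T := \langle X \rangle \subsetneq S$ and pick $a_0 \in S \setminus T$. Since $a_0 \notin X$ it is decomposable, and because $T$ is a subsemigroup at least one factor of any decomposition $a_0 = bc$ lies in $S \setminus T$. Iterating, I obtain sequences $(a_k)_{k \geq 0} \subseteq S \setminus T$ and $(c_k)_{k \geq 1} \subseteq S$ with $a_k = a_{k+1} c_{k+1}$ for every $k \geq 0$. The ascending chain $a_0 S^1 \subseteq a_1 S^1 \subseteq \cdots$ stabilizes by the ACC, giving $a_N \,\mathcal{R}\, a_{N+1}$ for some $N$; writing $a_{N+1} = a_N u$ with $u \in S^1$ yields $a_N = a_{N+1} c_{N+1} = a_N (u c_{N+1})$ with $u c_{N+1} \in S$, again contradicting the Key Lemma. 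Hence $S = \langle X \rangle$ is finitely generated.

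The main obstacle, I expect, will be the Key Lemma itself: the manipulation turning $a = as$ into a genuine idempotent is not immediate and crucially uses the archimedean hypothesis. Everything that follows---identifying $\mathcal{R}$-maximal elements with indecomposables, and the descent through factorizations forced by the ACC---is essentially bookkeeping once the Key Lemma is in hand.
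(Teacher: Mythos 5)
Your proof is correct, but it takes a genuinely different route from the paper's. The paper's forward direction leans on cited structure theory via the Tamura order $x\leq_a y\iff x=a^ny$: it quotes the existence of a set $M$ of $\leq_a$-maximal elements such that every element of $S$ has the form $a^np$ with $p\in M$ and $I=S\!\setminus\!M$ is an ideal, observes that $S/I$ is nilpotent, and concludes from Lemma \ref{nilpotent} (together with Lemma \ref{quotient}) that $M$ is finite and generates $S.$ You replace that citation with a self-contained argument whose engine is your Key Lemma that $a=as$ has no solution in an idempotent-free archimedean semigroup; your computation producing the idempotent $at$ from $a=as^m=a(at)$ is correct. From this you correctly deduce that $\mathcal{R}$-maximal elements are indecomposable, that the finitely many indecomposable elements (Lemma \ref{indecomposable}) generate $S$ as a right ideal, and finally---via the descent through factorizations inside $S\!\setminus\!\langle X\rangle,$ which the ascending chain condition forces to stabilise into another violation of the Key Lemma---that they generate $S$ as a semigroup. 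Each step checks out, including the use of commutativity to choose which factor continues the descent. What your approach buys is independence from the Tamura-order machinery and an explicit identification of a finite generating set, namely the indecomposable elements; what the paper's approach buys is brevity, since once the structure result is quoted the nilpotency of $S/I$ and Lemma \ref{nilpotent} do essentially all of the work.
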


\begin{proof}
The reverse implication follows from Theorem \ref{fg comm}, so we just need to prove the direct implication.\par
Let $a$ be a fixed element of $S.$  
The {\em Tamura order} on $S$ (with respect to $a$) is defined by $$x\leq_a y\iff x=a^ny\text{ for some }n\geq 0.$$
By \cite[Section IV.4]{Grillet}, there exists a set $M$ of maximal elements of $S$ (under $\leq_a$), where $a\in M,$
such that every element of $S$ can be written in the form $p_n=a^np$ with $n\geq 0$ and $p\in M,$ and the set $I=S\!\setminus\!M$ is an ideal.  
The Rees quotient $S/I$ is nilpotent, since $S$ is an archimedean semigroup, and hence it is finite by Lemmas \ref{quotient} and \ref{nilpotent}.
Therefore, the set $M$ is finite and $S=\langle M\rangle$ is finitely generated.
\end{proof}

\begin{lemma}
\label{AC fg}
Let $S$ be a weakly noetherian commutative semigroup with no non-trivial subgroups, and let $S=\mathcal{S}(Y, S_{\alpha})$ be a decomposition of $S$ into a semilattice of archimedean semigroups.
Let $\beta\in Y$ and let $T$ be the subsemigroup $\bigcup_{\alpha\geq\beta}S_{\alpha}$ of $S.$  Then there exists a finite set $X\subseteq S_{\beta}$ such that $S_{\beta}=\langle X\rangle(T\!\setminus\!S_{\beta})^1.$
\end{lemma}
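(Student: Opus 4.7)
The plan is to take $X$ to be a finite subset $Z \subseteq S_\beta$ such that $S_\beta = ZT^1$ as a right ideal of $T$. To see such a $Z$ exists, I will first verify that $S\setminus T = \bigcup_{\alpha \ngeq \beta} S_\alpha$ is an ideal of $S$ (if $\alpha \ngeq \beta$ and $\gamma \in Y$, then $\alpha\gamma \leq \alpha$, so $\alpha\gamma \ngeq \beta$), and apply Corollary \ref{complement left ideal} to conclude that $T$ is itself weakly (right) noetherian. Since $S_\beta$ is a right ideal of $T$, it is generated as such by some finite set $Z \subseteq S_\beta$, and I set $X = Z$.

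Letting $U = T\setminus S_\beta$ and $W = \langle Z\rangle U^1 = \langle Z\rangle \cup \langle Z\rangle U$, I will argue by contradiction, supposing $a_0 \in S_\beta \setminus W$. Writing $a_0 = z_1 t_1$ with $z_1 \in Z$ and $t_1 \in T^1$, a brief case analysis rules out both $t_1 = 1$ (else $a_0 \in Z \subseteq W$) and $t_1 \in U$ (else $a_0 \in ZU \subseteq W$), so $t_1 \in S_\beta$; and $t_1 \in W$ would also force $a_0 \in W$. Iterating produces a sequence $(a_n)_{n \geq 0}$ in $S_\beta \setminus W$ and a sequence $(z_n) \subseteq Z$ with $a_n = z_{n+1} a_{n+1}$ for each $n$. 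Since $a_n = a_{n+1} z_{n+1} \in a_{n+1} S^1$ by commutativity, the principal right ideals $a_n S^1$ form an ascending chain, which by weak noetherianity must stabilize beyond some index $N$.

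Thus $a_{N+1} = a_N s$ for some $s \in S^1$; combining with $a_N = z_{N+1} a_{N+1}$ yields $a_N = a_N c$ where $c = z_{N+1} s$, and a short check comparing archimedean components shows $c \in S_\beta$. Applying the archimedean property inside $S_\beta$ gives $m \in \mathbb{N}$ and $r \in S_\beta$ with $c^m = a_N r$, and substituting into $a_N c^m = a_N$ yields $a_N = a_N^2 r$; hence $a_N S^1 = a_N^2 S^1$, i.e.\ $a_N \mathrel{\mathcal{R}} a_N^2$. In a commutative semigroup $\mathcal{H} = \mathcal{R}$, so Green's theorem forces the $\mathcal{H}$-class $H$ of $a_N$ to be a subgroup of $S$; since $S$ has no non-trivial subgroups, $H = \{e\}$ for some idempotent $e$, and in particular $a_N = e$.

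To close the contradiction, observe that $a_{N+1} \in H$ gives $a_{N+1} = e$, so $a_N = z_{N+1} a_{N+1}$ becomes $e = z_{N+1} e$. Applying the archimedean property to $z_{N+1}, e \in S_\beta$, there exist $m \in \mathbb{N}$ and $r' \in S_\beta$ with $z_{N+1}^m = e r'$; the relation $z_{N+1} e = e$ gives $z_{N+1}^{m+1} = z_{N+1}^m$, so $f := z_{N+1}^m$ is an idempotent in $\langle Z\rangle$. A one-line computation using commutativity yields $ef = f$ and $fe = e$, whence $e = f \in \langle Z\rangle \subseteq W$, contradicting $a_N = e \in S_\beta \setminus W$. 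The step I expect to be the main obstacle is controlling the potentially non-terminating recursion $a_n = z_{n+1} a_{n+1}$: weak noetherianity forces the $\mathcal{R}$-chain to stabilise, and from that point the combination of commutativity, the archimedean structure of $S_\beta$, and the triviality of the maximal subgroups pins the limiting element to an idempotent already lying in $\langle Z\rangle$.
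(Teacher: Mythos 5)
Your proof is correct, but it takes a genuinely different route from the paper's. The paper splits into two cases according to whether $S_{\beta}$ contains an idempotent: in the first case it uses Lemma \ref{Grillet, archimedean with idempotent} together with the absence of non-trivial subgroups to see that $S_{\beta}$ is nilpotent and then runs the finiteness argument of Lemma \ref{nilpotent}; in the second it introduces the Tamura order, takes $M$ to be the set of maximal elements and $I=S_{\beta}\!\setminus\!M,$ and reduces to a nilpotent Rees quotient. Your argument is uniform and avoids both the case split and the Tamura order: you take a finite $Z$ with $S_{\beta}=ZT^1,$ show that an element outside $\langle Z\rangle(T\!\setminus\!S_{\beta})^1$ would generate an infinite factorisation chain $a_n=z_{n+1}a_{n+1}$ inside $S_{\beta},$ stabilise the ascending chain $a_nS^1$ using the ascending chain condition on principal right ideals, and then combine Green's theorem ($a\,\mathcal{H}\,a^2$ implies $H_a$ is a group), the identity $\mathcal{R}=\mathcal{H}$ for commutative semigroups, and the triviality of subgroups to force the stabilised element to be an idempotent $e$; the archimedean property then places $e$ in $\langle Z\rangle$ (indeed, once $e=z_{N+1}^{m}e$ and $z_{N+1}^{m}=er',$ one gets $e=er'=z_{N+1}^{m}$ directly, slightly shortening your final step). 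All the intermediate verifications check out, including the component computation showing $c=z_{N+1}s\in S_{\beta}.$ What the paper's route buys is the additional information that $\langle X\rangle$ is finite when $S_{\beta}$ has an idempotent, which is reused later in the corollary on complete semigroups; what your route buys is a self-contained descent argument that needs neither the Tamura order nor the nilpotency lemma.
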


\begin{proof}
Since the complement of $T$ is an ideal of $S,$ we have that $T$ is weakly noetherian by Corollary \ref{complement left ideal}.  Note that $S_{\beta}$ is an ideal of $T.$\par
Suppose first that $S_{\beta}$ has an idempotent.  Then $S_{\beta}$ is a nilpotent semigroup with zero $0.$ 
Since $T$ is weaky noetherian, we have that $S_{\beta}=XT^1$ for some finite set $X\subseteq S_{\beta}\!\setminus\!\{0\}.$  Then $S_{\beta}=(XS_{\beta}^1)(T\!\setminus\!S_{\beta})^1.$  By the same argument as the one in Lemma \ref{nilpotent}, we have that $XS_{\beta}^1$ is the finite semigroup $\langle X\rangle.$\par 
Now suppose that $S_{\beta}$ has no idempotent.  Let $M$ be the set of maximal elements of $S_{\beta}$ under the Tamura order with respect to an element $a\in S_{\beta},$ and let $I=S_{\beta}\!\setminus\!M.$  Then $I$ is an ideal of $T.$  We have that $T/I$ is weakly noetherian by Lemma \ref{quotient}, and $S_{\beta}/I$ is a nilpotent semigroup with zero $0.$  By the same argument as above, there exists a finite set $X\subseteq M=S_{\beta}\!\setminus\!\{0\}$ such that $S_{\beta}/I=\langle X\rangle(T\!\setminus\!S_{\beta})^1.$  We may assume without loss of generality that $a\in X.$  
Since every element of $S_{\beta}$ can be written as $a^np$ for some $n\geq 0$ and $p\in M,$ it follows that $S_{\beta}=\langle X\rangle(T\!\setminus\!S_{\beta})^1.$
\end{proof}

\begin{remark}
Given Lemma \ref{AC fg}, one might be tempted to think that in a weakly noetherian commutative semigroup with no non-trivial subgroups, every archimedean semigroup is contained in a finitely generated subsemigroup.  However, this is not the case.  Indeed, lettting $Y=(\mathbb{N}, \geq)$ and recalling Construction \ref{con}, we have that $U=\mathcal{U}(Y, \text{id})$ is weakly noetherian by Proposition \ref{con prop}.  Clearly $U$ is commutative.  It can be easily shown that $U$ is locally finite, so its infinite archimedean component $N_Y$ is not contained in a finitely generated subsemigroup.
\end{remark}

We are now in a position to prove the main result of this section.

\begin{proof}[Proof of Theorem \ref{comm, finitely many ACs}]
($\Rightarrow$) Let $S=\mathcal{S}(Y, S_{\alpha}),$ where $Y$ is finite, be the decomposition of $S$ into a semilattice of archimedean semigroups.
Let $T$ denote the quotient $S/\mathcal{H}.$  Then $T$ is a semilattice of archimedean semigroups $\mathcal{S}(Y, T_{\alpha})$ where each $T_{\alpha}$ is $\mathcal{H}$-trivial.  We have that $T$ is weakly noetherian by Lemma \ref{quotient}.\par
We prove that $T$ is finitely generated by induction on $|Y|.$  Suppose that $|Y|=1,$ so that $T$ is an archimedean semigroup.  If $T$ has an idempotent, then it follows from Lemma \ref{archimedean with idempotent} that $T$ is finite.  If $T$ has no idempotent, then $T\cong S$ is finitely generated by Lemma \ref{archimedean without idempotent}.\par 
Now suppose that $|Y|>1.$  Let $0$ be the minimal element of $Y.$  By Lemma \ref{AC fg}, there exists a finite set $X_0\subseteq T_0$ such that $T_0=\langle X_0\rangle(T\!\setminus\!T_0)^1.$  Let $\alpha_1, \dots, \alpha_k$ be the elements of $Y$ that only $0$ is strictly less than.  For each $i\in\{1, \dots, k\},$ define 
$$T_i=\bigcup_{\alpha\geq \alpha_i}T_{\alpha}.$$  
We have that $T_i$ is a subsemigroup of $T$ whose complement is an ideal, so it is weakly noetherian by Corollary \ref{complement left ideal}.  By the inductive hypothesis, $T_i$ is generated by some finite set $X_i.$  It follows that $T=\bigcup_{i=0}^kT_i$ is generated by the finite set $\bigcup_{i=0}^kX_i,$ as required.\par
($\Leftarrow$)  If $S/\mathcal{H}$ is finitely generated, then it is weakly noetherian by Theorem \ref{fg comm}, and hence $S$ is weakly noetherian by Lemma \ref{congruence in R}.
\end{proof}

From the proof of Theorem \ref{comm, finitely many ACs}, we deduce a couple of corollaries.  The first concerns {\em complete} semigroups; that is, commutative semigroups in which every archimedean component contains an idempotent.

\begin{corollary}
Let $S$ be a complete semigroup with finitely many idempotents.  Then $S$ is weakly noetherian if and only if $S/\mathcal{H}$ is finite.
\end{corollary}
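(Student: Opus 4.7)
My plan is to reduce to Theorem \ref{comm, finitely many ACs} and then tighten the conclusion using the extra hypotheses. First I would observe that in any commutative archimedean semigroup, any two idempotents $e, f$ satisfy $e^n = fs$ for some $n$ and $s$, so $e = e^n = fs$ and hence $ef = fsf = fs = e$; by symmetry $ef = f$, forcing $e = f$. Each archimedean component of the complete semigroup $S$ therefore contains exactly one idempotent, so the assumption of finitely many idempotents yields only finitely many archimedean components. Theorem \ref{comm, finitely many ACs} then tells us that $S$ is weakly noetherian if and only if $T := S/\mathcal{H}$ is finitely generated, reducing the corollary to the claim that, under our hypotheses, $T$ is finitely generated if and only if $T$ is finite. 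Since finite implies finitely generated is trivial, the real work is in the forward direction.

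Next I would describe the structure of $T$. The quotient map $S \to T$ preserves idempotents, and a short argument using $\mathcal{H} = \mathcal{J}$ in commutative semigroups shows it also induces a bijection on archimedean components; in particular $T$ is commutative, complete, and has the same finite structure semilattice $Y$ as $S$. A further short computation using commutativity shows that $T$ itself is $\mathcal{H}$-trivial, and so has no non-trivial subgroups. By Lemma \ref{Grillet, archimedean with idempotent}, each archimedean component $T_\alpha$ of $T$ is therefore a commutative nilpotent semigroup whose zero is the unique idempotent of $T_\alpha$. Assuming $T$ is finitely generated, R\'edei's theorem (Theorem \ref{fg comm}) implies $T$ is weakly noetherian, so Lemma \ref{AC fg} applies to $T$: for each $\beta \in Y$ there exists a finite set $X_\beta \subseteq T_\beta$ with $T_\beta = \langle X_\beta \rangle (T_{>\beta})^1$, where $T_{>\beta} := \bigcup_{\gamma > \beta} T_\gamma$.

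To finish, I would perform downward induction on $\beta$ in the finite semilattice $Y$: for $\beta$ maximal, $T_{>\beta} = \emptyset$ and $T_\beta = \langle X_\beta \rangle$; for non-maximal $\beta$, the inductive hypothesis makes $T_{>\beta}$ a finite union of finite sets, hence finite, so $T_\beta$ is a product of two finite sets. The main subtlety, and in my view the only non-routine step, is that Lemma \ref{AC fg} only asserts $X_\beta$ is finite, not $\langle X_\beta \rangle$. But since $T_\beta$ is a commutative nilpotent semigroup with zero, $\langle X_\beta \rangle$ is itself a finitely generated commutative nilpotent semigroup, and Lemma \ref{nilpotent} (via the bound $|\langle X \rangle| \le \prod_{x \in X} m(x)$ extracted from its proof) guarantees $\langle X_\beta \rangle$ is finite. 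Putting everything together, every $T_\alpha$ is finite and, $Y$ being finite, $T = \bigcup_{\alpha \in Y} T_\alpha$ is finite, completing the proof.
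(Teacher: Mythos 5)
Your proof is correct and follows essentially the same route as the paper: both reduce to Theorem \ref{comm, finitely many ACs}, use the nilpotency of the components of $S/\mathcal{H}$ together with Lemma \ref{AC fg} and the finiteness bound from the proof of Lemma \ref{nilpotent}, and then run the same induction over the finite semilattice $Y$ (you organise it as a downward induction on $\beta\in Y$ rather than on $|Y|$, which is an immaterial repackaging). Your explicit justification that a complete commutative semigroup with finitely many idempotents has finitely many archimedean components, and that $S/\mathcal{H}$ is $\mathcal{H}$-trivial, usefully fills in details the paper leaves implicit.
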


\begin{proof}
We have that $S/\mathcal{H}=\mathcal{S}(Y, T_{\alpha}),$ where $Y$ is finite and each $T_\alpha$ is a nilpotent semigroup.  The direct implication is proved by a similar induction argument to the one in the proof of Theorem \ref{comm, finitely many ACs}.  Notice that $\langle X_0\rangle$ is finite, and also $T\!\setminus\!T_0$ is finite since it is a finite union of semigroups that are finite by the inductive hypothesis.  It follows that $T_0=\langle X_0\rangle(T\!\setminus\!T_0)^1$ is finite, and hence $T=T_0\cup(T\!\setminus\!T_0)$ is finite.
\end{proof}

On the other extreme we have:

\begin{corollary}
Let $S$ be an idempotent-free commutative semigroup with finitely many archimedean components.  Then $S$ is weakly noetherian if and only if it is finitely generated.
\end{corollary}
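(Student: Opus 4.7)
The plan is to mirror the inductive proof of Theorem \ref{comm, finitely many ACs}, but apply it directly to $S$ rather than to $S/\mathcal{H}$, exploiting the fact that an idempotent-free semigroup has no non-trivial subgroups and therefore satisfies the hypothesis of Lemma \ref{AC fg}. The reverse implication is immediate: if $S$ is finitely generated then by R\'edei's theorem (Theorem \ref{fg comm}) it is noetherian and in particular weakly noetherian.

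For the forward implication, decompose $S = \mathcal{S}(Y, S_\alpha)$ into its archimedean components via Theorem \ref{archimedean decomposition}, where $Y$ is finite by hypothesis. As a finite meet-semilattice $Y$ has a unique minimum element $0$. The plan is to prove by induction on $|Y|$ that every weakly noetherian idempotent-free commutative semigroup whose structure semilattice has size $|Y|$ is finitely generated. The base case $|Y|=1$ is exactly Lemma \ref{archimedean without idempotent}, since $S$ itself is then an archimedean semigroup without idempotent.

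For the inductive step, idempotent-freeness ensures $S$ has no non-trivial subgroups, so Lemma \ref{AC fg} applied at $\beta = 0$ gives a finite set $X_0 \subseteq S_0$ with $S_0 = \langle X_0 \rangle (S \setminus S_0)^1$. Let $\alpha_1, \dots, \alpha_k$ be the minimal elements of $Y \setminus \{0\}$ and set $S^{(i)} = \bigcup_{\alpha \geq \alpha_i} S_\alpha$. Exactly as in the proof of Theorem \ref{comm, finitely many ACs}, each $S^{(i)}$ is a subsemigroup whose complement is an ideal of $S$, hence weakly noetherian by Corollary \ref{complement left ideal}; it is still idempotent-free and its structure semilattice $\{\alpha \in Y : \alpha \geq \alpha_i\}$ omits $0$, so strictly shrinks. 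The inductive hypothesis furnishes a finite generating set $X_i$ for $S^{(i)}$. Since $Y$ is finite, every $\alpha > 0$ lies above some $\alpha_i$, so $S \setminus S_0 = \bigcup_i S^{(i)}$; expanding $S_0 = \langle X_0 \rangle (S \setminus S_0)^1$ as semigroup products in $S$ then yields $S = \langle X_0 \cup X_1 \cup \cdots \cup X_k \rangle$.

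There is no genuine obstacle: the inductive scheme is essentially a transcription of that used for Theorem \ref{comm, finitely many ACs}, with the role played there by $\mathcal{H}$-triviality of the archimedean components of $S/\mathcal{H}$ (which splits into the finite nilpotent case and the finitely generated archimedean case) replaced here by global idempotent-freeness, which feeds Lemma \ref{archimedean without idempotent} at the base and keeps the hypothesis of Lemma \ref{AC fg} intact at every inductive step.
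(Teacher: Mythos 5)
Your proposal is correct and is essentially the paper's own argument: the paper proves this corollary by running the induction from Theorem \ref{comm, finitely many ACs} directly on $S$ instead of on $S/\mathcal{H}$, which is precisely what you have written out (using that idempotent-freeness rules out all subgroups, so Lemma \ref{AC fg} applies, and that the base case is Lemma \ref{archimedean without idempotent}). No discrepancies.
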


\begin{proof}
This proof is essentially the same as that of Theorem \ref{comm, finitely many ACs}, except we prove by induction that $S$ is finitely generated, rather than $S/\mathcal{H}.$
\end{proof}

We conclude this section by exhibiting an example of an idempotent-free commutative semigroup that is weakly noetherian but not finitely generated.

\begin{ex}
For each $i\in\mathbb{N},$ let $S_i=\langle a_i\rangle$ be a copy of the free monogenic semigroup $\mathbb{N},$ and let $S$ be the disjoint union of the semigroups $S_i.$  Define a multiplication on $S,$ extending those on each $S_i,$ as follows: for each $i, j, m, n\in\mathbb{N}$ with $i<j,$ let
$$a_i^ma_j^n=a_j^n=a_j^na_i^m.$$
It easy to see that, with this multiplication, $S$ is an idempotent-free commutative semigroup.
Moreover, we have that $S=\mathcal{S}(Y, S_i)$ where $Y=(\mathbb{N}, \geq).$  Since $Y$ is infinite, $S$ is not finitely generated.  It can be easily shown that every ideal of $S$ has the form $$a_i^mS^1=\{a_i^n : n\geq m\}\cup\biggl(\bigcup_{j>i}S_j\biggr)$$
for some $i, m\in\mathbb{N}.$  In particular, every ideal of $S$ is principal, so $S$ is weakly noetherian.
\end{ex}

\section*{Acknowledgements}
This research was supported by the London Mathematical Society through the Early Career Fellowship scheme.  The author would like to thank the referee for their helpful comments and suggestions, which improved the quality of the article.  The author is also grateful to Victoria Gould for her support, and in particular for comments that led to Theorem \ref{principal}, Proposition \ref {acc ideal} and Theorem \ref{BR}.

\end{document}